\newtheorem*{rep@theorem}{\rep@title}
\newcommand{\newreptheorem}[2]{%
\newenvironment{rep#1}[1]{%
 \def\rep@title{#2 \ref*{##1}}%
 \begin{rep@theorem}}%
 {\end{rep@theorem}}}
\newtheorem{theorem}{Theorem}[section]
\newtheorem{proposition}[theorem]{Proposition}
\newtheorem{corollary}[theorem]{Corollary}
\newtheorem*{theorem*}{Theorem}
\newtheorem{quotedtheorem}{Theorem}
\newtheorem{conjecture}[theorem]{Conjecture}
\newtheorem{lemma}[theorem]{Lemma}
\theoremstyle{remark}
\newtheorem{remark}[theorem]{Remark}
\theoremstyle{definition}
\newtheorem{definition}[theorem]{Definition}
\newtheorem{quoteddefinition}{Definition}
\theoremstyle{empty}
\newenvironment{mathlist}
{\begin{enumerate}[label={\upshape(\roman*)}, align=left, widest=iii, leftmargin=*]}
{\end{enumerate}\ignorespacesafterend}
\newcommand{\SmallOh}[1]{o\mleft(#1\mright)}
\newcommand{\IsSmallOh}[2]{#1 = o\mleft(#2\mright)}
\newcommand{\SmallOhP}[1]{o_p\mleft(#1\mright)}
\newcommand{\IsSmallOhP}[2]{#1 = o_p\mleft(#2\mright)}
\newcommand{\IsBigOh}[2]{#1 = O\mleft(#2\mright)}
\newcommand{\IsBigOhP}[2]{#1 = O_p\mleft(#2\mright)}
\newcommand{\IsBigTheta}[2]{#1 = \Theta\mleft(#2\mright)}
\newcommand{\SmallOmega}[1]{\omega\mleft(#1\mright)}
\newcommand{\IsSmallOmega}[2]{#1 = \omega\mleft(#2\mright)}
\newcommand{\aas}{\text{ a.a.s. }}
\newcommand{\marked}[1]{#1^\bullet}
\newcommand{\unmarked}[1]{#1^\circ}
\newcommand{\lkto}{\leftrightarrow}
\newcommand{\nbcc}{\#\mathrm{CC}}
\begin{document}

\begin{frontmatter}

\title{The split-and-drift random graph, a null model for speciation}

\journal{Stochastic Processes and their Applications}

\author[cirb,lpma]{François~Bienvenu\corref{cor}}
\author[lpma]{Florence~Débarre}
\author[cirb,lpma]{Amaury~Lambert}

\address[cirb]{Center for Interdisciplinary Research in Biology (CIRB),
CNRS UMR 7241, Collège de France, PSL Research University, 
Paris, France}

\address[lpma]{Laboratoire de Probabilités, Statistique et Modélisation (LPSM),
CNRS UMR 8001, Sorbonne Université,
Paris, France}

\cortext[cor]{Corresponding author: francois.bienvenu@normalesup.org}

\begin{abstract}
We introduce a new random graph model motivated by biological questions
relating to speciation. This random graph is defined as the stationary
distribution of a Markov chain on the space of graphs on $\{1, \ldots, n\}$.
The dynamics of this Markov chain is governed by two types of events: vertex
duplication, where at constant rate a pair of vertices is sampled uniformly and
one of these vertices loses its incident edges and is rewired to the other
vertex and its neighbors; and edge removal, where each edge disappears at
constant rate.  Besides the number of vertices $n$, the model has a single
parameter $r_n$.

Using a coalescent approach, we obtain explicit formulas for the first moments
of several graph invariants such as the number of edges or the number of
complete subgraphs of order $k$. These are then used to identify five
non-trivial regimes depending on the asymptotics of the parameter $r_n$.
We derive an explicit expression for the degree distribution, and show
that under appropriate rescaling it converges to classical distributions when
the number of vertices goes to infinity. Finally, we give asymptotic bounds for
the number of connected components, and show that in the sparse regime the
number of edges is Poissonian.
\end{abstract}

\begin{keyword}
dynamical network \sep duplication-divergence \sep vertex duplication \sep
genetic drift \sep species problem \sep coalescent
\end{keyword}

\end{frontmatter}

\tableofcontents

\section{Introduction}

In this paper, we introduce a random graph derived from a minimalistic model of
speciation. This random graph bears superficial resemblance to classic models
of protein interaction networks \cite{ChungJCB2003, IspolatovPhysRevE2005,
SoleACS2002, VazquezComPlexUs2003} in that the events shaping the graph are
the duplication of vertices and the loss of edges. However, our model is
obtained as the steady state of a Markov process (rather than by repeatedly
adding vertices), and has the crucial feature that the duplication of vertices
is independent from the loss of edges. These differences result in a very
different behavior of the model.

Before describing the model formally in Section~\ref{secFormalDescription},
let us briefly explain the motivation behind its introduction.

\subsection{Biological context} \label{secBiologicalMotivation}
Although it is often presented as central to biology, there is no consensus
about how the concept of species should be defined. A widely held view is
that it should be based on the capacity of individuals to interbreed.
This is the so-called ``biological species concept'', wherein a species
is defined as a group of potentially interbreeding populations that cannot
interbreed with populations outside the group.

This view, whose origins can be traced back to the beginning of the 20th
century \cite{Poulton1904}, was most famously promoted by Ernst Mayr
\cite{Mayr1942} and has been most influential in biology 
\cite{CoyneOrr2004}.  However, it remains quite imprecise: indeed, groups of
populations such that (1) all pairs of populations can interbreed and (2) no
population can interbreed with a population outside the group are probably not
common in nature -- and, at any rate, do not correspond to what is considered
a species in practice.  Therefore, some leniency is required when applying
conditions (1) and (2). But once we allow for this, there are several ways to
formalize the biological species concept, as illustrated in
Figure~\ref{figConflictingDefinitions}.  Thus, it seems arbitrary to favor one
over the others in the absence of a mechanism to explain why some kind of
groups should be more relevant (e.g., arise more frequently) than others.

\begin{figure}[h]
  \centering
  \captionsetup{width=0.85\linewidth}
  \includegraphics[width=0.85\linewidth]{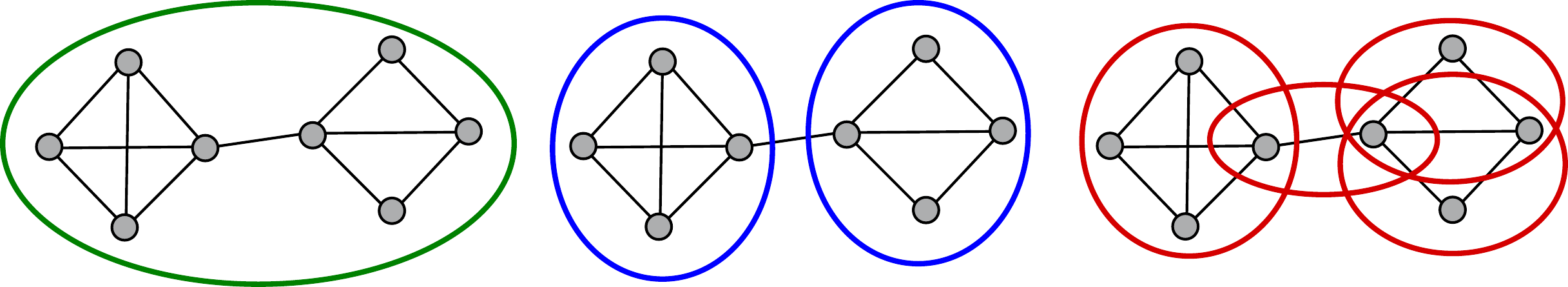}
  \caption[Conflicting definitions]
  {The vertices of the graph represent populations and its edges denote
  interbreeding potential (that is, individuals from two linked populations
  could interbreed, if given the chance). Even with such perfect information,
  it is not obvious how to delineate ``groups of potentially interbreeding
  populations that cannot interbreed with populations outside the group'':
  should these correspond to connected components (on the left, in green),
  maximal complete subgraphs (on the right, in red), or be based on some
  other clustering method (middle, in blue)?}
  \label{figConflictingDefinitions}
\end{figure}

Our aim is to build a minimal model of speciation that would make
predictions about the structure and dynamics of the interbreeding network and
allow one to recover species as an emergent property. To do so, we model
speciation at the level of populations. Thus, we consider a set of $n$
populations and we track the interbreeding ability for every pair of
populations.  All this information is encoded in a graph whose vertices
correspond to populations and whose edges indicate potential interbreeding,
i.e., two vertices are linked if and only if the corresponding populations can
interbreed.

Speciation will result from the interplay between two mechanisms.
First, populations can sometimes ``split'' into two initially identical
populations which then behave as independent entities; this could happen
as a result of the fragmentation of the habitat or of the colonization of a new
patch. Second, because they behave as independent units, two initially
identical populations will diverge (e.g., as a result of genetic drift)
until they can no longer interbreed.

\subsection{Formal description of the model} \label{secFormalDescription}
Start from any graph with vertex set $V = \Set{1, \ldots, n}$, and
let it evolve according to the following rules
\begin{enumerate}
  \item \textbf{Vertex duplication:} each vertex ``duplicates'' at rate~$1$;
    when a vertex duplicates, it chooses another vertex uniformly at random
    among the other vertices and replaces it with a copy of itself.
    The replacement of $j$ by a copy of $i$ means that $j$ loses its
    incident edges and is then linked to $i$ and to all of its neighbors, as
    depicted in Figure~\ref{figVertexDuplication}.
    \begin{figure}[h]
      \centering
      \captionsetup{width=0.8\linewidth}
      \includegraphics[width=0.8\linewidth]{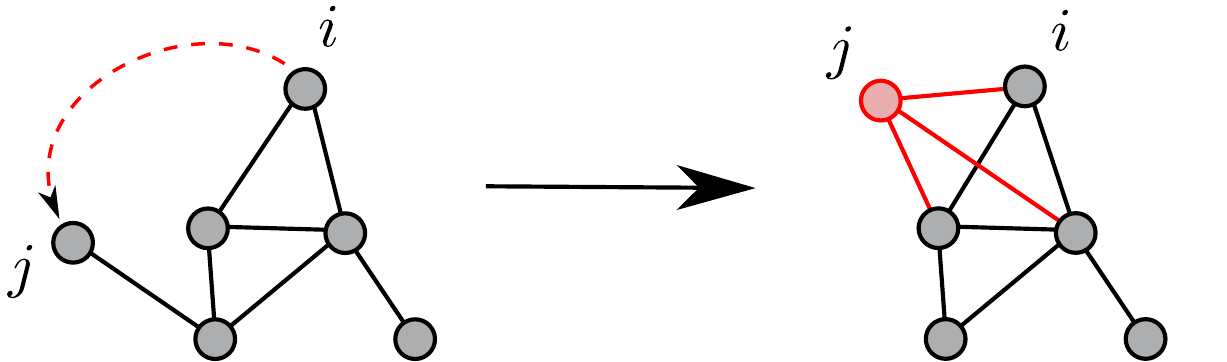}
      \caption[Vertex duplication]
       {An illustration of vertex duplication. Here, $i$ duplicates and
        replaces $j$. After the duplication, $j$ is linked to $i$
        and to each of its neighbors.}
      \label{figVertexDuplication}
    \end{figure}
  \item \textbf{Edge removal:} each edge disappears at constant rate~$\rho$.
\end{enumerate}

This procedure defines a continuous-time Markov chain $(G_n(t))_{t \geq 0}$
on the finite state space of all graphs whose vertices are the integers
$1, \ldots, n$.  It is easy to see that this Markov chain is irreducible.
Indeed, to go from any graph $G^{(1)}$ to any graph $G^{(2)}$, one can consider
the following sequence of events: first, a vertex is duplicated repeatedly in
order to obtain the complete graph of order~$n$ (e.g., $\forall k \in \{2,
\ldots, n\}$, vertex~$k$ is replaced by a copy of vertex~$1$); then, all the
edges that are not in $G^{(2)}$ are removed.

Because the Markov chain $(G_n(t))_{t \geq 0}$ is irreducible, it has a
unique stationary probability distribution $\mu_{n, \rho}$. This probability
distribution on the set of graphs of order~$n$ defines a random graph that is
the object of study of this paper.

\subsection{Notation}
To study the asymptotic behavior of our model as $n \to +\infty$, we can let
$\rho$, the ratio of the edge removal rate to the vertex duplication rate, be
a function of $n$. As will become evident, it is more convenient
to parametrize the model by
\[
  r_n \defas \tfrac{n - 1}{2} \, \rho_n \, .
\]
Thus, we write $G_{n, r_n}$ to refer to a random graph whose law is
$\mu_{n, {\frac{2r_n}{n - 1}}}$.

Although some of our results hold for any $(n, r)$, in many cases we will
be interested in asymptotic properties that are going to depend on the
asymptotics of $r_n$. To quantify these, we will use the Bachmann--Landau
notation, which for positive sequences $r_n$ and $f(n)$ can be summarized as:
\begin{itemize}
  \item $r_n \sim f(n)$ when $r_n / f(n) \to 1$.
  \item $r_n = o(f(n))$ when $r_n / f(n) \to 0$.
  \item $r_n = \Theta(f(n))$ when there exists positive constants $\alpha$ and
    $\beta$ such that, asymptotically, $\alpha f(n) \leq r_n \leq \beta f(n)$.
  \item $r_n = \omega(f(n))$ when $r_n / f(n) \to +\infty$.
\end{itemize}

These notations also have stochastic counterparts, whose meaning will be
recalled when we use them.

Finally, we occasionally use the expression \emph{asymptotically almost
surely} (abbreviated as a.a.s.) to that a property holds with probability that
goes to $1$ as $n$ tends to infinity:
\[
  \mathcal{Q}_n\; \aas \iff\; \Prob{\mathcal{Q}_n} \tendsto{n \to +\infty} 1 \,.
\]

\subsection{Statement of results}
Table~\ref{tabFirstMoments} lists the first moments of several graph invariants
obtained in Section~\ref{secFirstMoments}. These are then used to 
identify different regimes, depending on the asymptotic behavior of the
parameter $r_n$, as stated in Theorem~\ref{thmRegimes}.

\begin{reptheorem}{thmRegimes}
Let $D_n$ be the degree of a fixed vertex of $G_{n, r_n}$. In the limit as
$n \to +\infty$, depending on the asymptotics of $r_n$ we have the following
behaviors for~$G_{n, r_n}$
\begin{mathlist}
\item \emph{Complete graph:} when $\IsSmallOh{r_n}{1/n}$, 
  $\Prob{G_{n, r_n} \text{ is complete}}$ goes to~$1$,
  while when $\IsSmallOmega{r_n}{1/n}$ it goes to~$0$;
  when $\IsBigTheta{r_n}{1/n}$, this probability is bounded away from~$0$
  and from~$1$.
\item \emph{Dense regime:} when $\IsSmallOh{r_n}{1}$, $\Prob{D_n = n - 1} \to 1$.
\item \emph{Sparse regime:} when $\IsSmallOmega{r_n}{n}$,
  $\Prob{D_n = 0} \to 1$.
\item \emph{Empty graph:} when $\IsSmallOh{r_n}{n^2}$, 
  $\Prob{G_{n, r_n} \text{ is empty}}$ goes to~$0$
  while when $\IsSmallOmega{r_n}{n^2}$ it goes to~$1$;
  when $\IsBigTheta{r_n}{n^2}$, this probability is bounded away from~$0$ and
  from~$1$.
\end{mathlist}
\end{reptheorem}

\begin{table}[h!]
\begin{adjustwidth}{-1cm}{-1cm}
\centering
\begin{tabular}{llll}
\toprule
 Variable   &  Expectation  &  Variance  &  Covariance \\
\midrule
$\Indic{i \lkto j}$  &  $\frac{1}{1 + r}$  &  $\frac{r}{(1 + r)^2}$  &
  $\frac{r}{(1 + r)^2 (3 + 2 r)}$ \small if vertex in common,\\
  & & & $\frac{2\,r}{(1 + r)^2 (3 + r) (3 + 2r)}$ \small otherwise.\\ \addlinespace
$D^{(i)}_n$  &  $\frac{n - 1}{1 + r}$  &
  $\frac{r (n - 1) (1 + 2 r + n)}{(1 + r)^2 \, (3 + 2 r)} $  &
  $\frac{r}{(1 + r)^2} \mleft(1 + \frac{3 (n - 2)}{3 + 2 r} +
     \frac{2 (n - 2)(n - 3)}{(3 + r)(3 + 2 r)}\mright)$ \\ \addlinespace
$|E_n|$  &  $\frac{n (n - 1)}{2(1 + r)}$  &
  $\frac{r n (n - 1) (n^2 + 2 r^2 + 2nr + n + 5r + 3)}{2 \, (1 + r)^2 
    (3 + r) \, (3 + 2r)}$  &  \,---  \\ \addlinespace
$X_{n, k}$  &  $\binom{n}{k} \mleft(\frac{1}{1 + r}\mright)^{k - 1}$  & 
  \,{\footnotesize unknown} & \,--- \\
\bottomrule
\end{tabular}
\end{adjustwidth}
\caption{First and second moments of several graph invariants of $G_{n, r}$:
$\Indic{i \lkto j}$ is the variable indicating that $\{ij\}$ is an edge,
$D^{\scriptscriptstyle (i)}_n$ the degree of vertex~$i$, $\Abs{E_n}$ the number
of edges and $X_{n, k}$ the number of complete subgraphs of order~$k$. The
covariance of the indicator variables of two edges depends on whether these
edges share a common end, hence the two expressions. All expressions hold for
every value of $n$ and $r$.}
\label{tabFirstMoments}
\end{table}

In Section~\ref{secDegreeDistribution}, we derive an explicit expression
for the degree distribution, which holds for every value of $n$ and $r_n$. We
then show that, under appropriate rescaling, this degree converges to
classical distributions.

\begin{reptheorem}{thmDegreeDistribution}
Let $D_n$ be the degree of a fixed vertex of $G_{n, r_n}$. Then,  
  for each $k \in \Set{0, \ldots, n - 1}$,
\[
  \Prob{D_n = k} = \frac{2\,r_n\,(2\,r_n + 1)}{(n + 2\,r_n) (n - 1 + 2\,r_n)} \,
  (k + 1) \, \prod_{i = 1}^k \frac{n - i}{n - i + 2\,r_n - 1}\, , 
\]
where the empty product is 1.
\end{reptheorem}

\begin{reptheorem}{thmDegreeConvergence} ~
\begin{mathlist}
  \item If $r_n \to r > 0$, then $\frac{D_n}{n}$ converges in distribution to a 
    $\mathrm{Beta}(2, 2\,r)$ random variable.
  \item If $r_n$ is both $\SmallOmega{1}$ and $\SmallOh{n}$, then
    $\frac{D_n}{n / r_n}$ converges in distribution to a size-biased
    exponential variable with parameter~$2$.
  \item If $2\,r_n / n \to \rho > 0$, then $D_n + 1$ converges in distribution
    to a size-biased geometric variable with parameter $\rho/(1 + \rho)$.
\end{mathlist}
\end{reptheorem}

\newpage
\newgeometry{textwidth=390pt, top=2.0cm, bottom=2.6cm}
\enlargethispage{0.6cm}

Asymptotic bounds for the number of connected components are obtained in
Section~\ref{secConnectedComponents}, where the following theorem is proved.

\begin{reptheorem}{thmConnectedComponents}
Let $\nbcc_n$ be the number of connected components of $G_{n, r_n}$. If
$r_n$ is both $\SmallOmega{1}$ and $\SmallOh{n}$, then
\[
  \frac{r_n}{2} + \SmallOhP{r_n}
  \leq \; \nbcc_n \; \leq
  2\,r_n \log n + \SmallOhP{r_n \log n}
\]
where, for a positive sequence $(u_n)$, $\SmallOhP{u_n}$ denotes a given
sequence of random variables $(X_n)$ such that $X_n/u_n \to 0$ in probability.
\end{reptheorem}

Because the method used to obtain the upper bound in
Theorem~\ref{thmConnectedComponents} is rather crude, we formulate the
following conjecture, which is well supported by simulations.

\begin{repconjecture}{conjConnectedComponents}
\[
  \exists \alpha, \beta > 0 \st
  \Prob{\alpha r_n \leq \nbcc_n \leq \beta r_n} \tendsto{n \to \infty} 1.
\]
\end{repconjecture}

Finally, in Section~\ref{secNumberOfEdges} we use the Stein--Chen method to
show that the number of edges is Poissonian in the sparse regime, as shown
by Theorem~\ref{thmPoissonianEdges}.

\begin{reptheorem}{thmPoissonianEdges}
Let $|E_n|$ be the number of edges of $G_{n, r_n}$. If $\IsSmallOmega{r_n}{n}$
then
\[
  d_{\mathrm{TV}}\big(|E_n|, \mathrm{Poisson}(\lambda_n)\big)
  \tendsto{n \to +\infty} 0 \, , 
\]
where $d_{\mathrm{TV}}$ stands for the total variation distance and
$\lambda_n = \Expec{|E_n|} \sim \frac{n^2}{2r_n}$. If in addition
$\IsSmallOh{r_n}{n^2}$, then $\lambda_n \to +\infty$ and as a result
\[
 \frac{|E_n| - \lambda_n}{\sqrt{\lambda_n}}
 \tendsto[\mathcal{D}]{n\to+\infty} \mathcal{N}(0, 1) \, ,
\]
where $\mathcal{N}(0, 1)$ denotes the standard normal distribution.
\end{reptheorem}

These results are summarized in Figure~\ref{figMainResults}.

\begin{figure}[h]
  \centering
  \captionsetup{width=1.2\linewidth}
\begin{adjustwidth}{-2cm}{-2cm}
  \includegraphics[width=\linewidth]{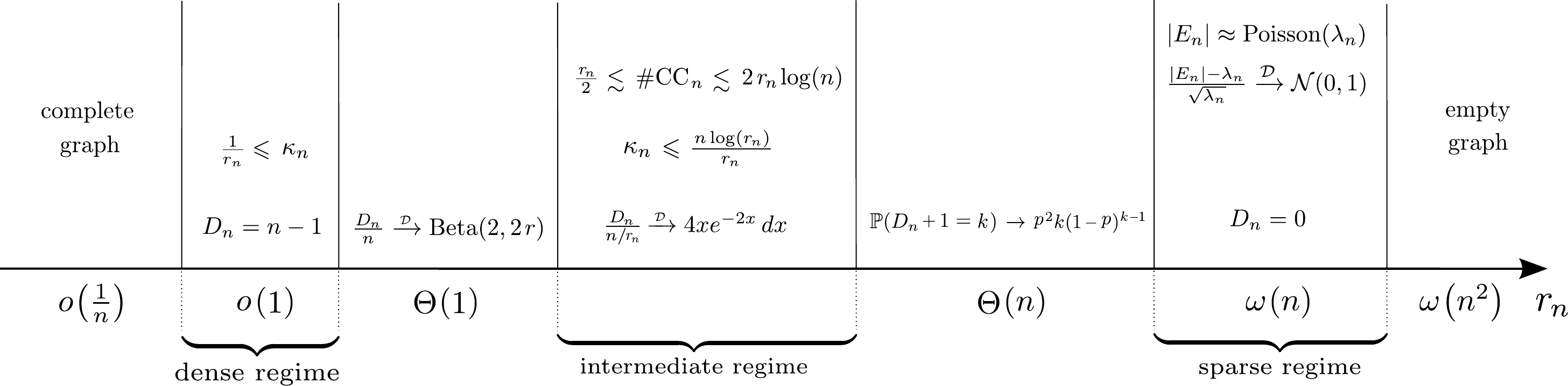}
\end{adjustwidth}
  \caption[Main results]
  {A graphical summary of the main results established in the paper; $D_n$ is
  the degree of a fixed vertex, $|E_n|$ the number of edges, $\nbcc_n$ the
  number of connected components, and $\kappa_n$ the clique number. All
  equalities and inequalities are to be understood ``asymptotically almost
  surely'' (i.e.\ hold with probability that goes to $1$ as $n$ tends to
  infinity).}
  \label{figMainResults}
\end{figure}


\newpage
\newgeometry{textwidth=390pt, top=2.4cm, bottom=2.8cm}

\section{Coalescent constructions of $G_{n, r_n}$} \label{secCoalescentConstructions}

In this section, we detail coalescent constructions of $G_{n, r_n}$ that will
be used throughout the rest of the paper. Let us start by recalling some
results about the Moran model.

\subsection{The standard Moran process} \label{secMoranProcess}
The Moran model \cite{Moran1958} is a classic model of population genetics.
It consists in a set of $n$ particles governed by the following dynamics: after
an exponential waiting time with parameter $\binom{n}{2}$, a pair of particles
is sampled uniformly at random.  One of these particles is then removed (death)
and replaced by a copy of the other (birth), and we iterate the procedure. 

In this document, we will use the Poissonian representation of the Moran
process detailed in the next definition.

\begin{definition} \label{defStandardMoran}
The \emph{driving measure of a standard Moran process on $V$} is a collection
$\mathpzc{M} = (M_{(ij)})_{(ij) \in V^2}$ of i.i.d.\
Poisson point processes with rate~$1/2$ on $\R$. 

We think of the elements of $V$ as sites, each occupied by a
single particle. In forward time, each atom $t \in M_{(ij)}$ indicates the
replacement, at time~$t$, of the particle in~$i$ by a copy of the particle
in~$j$.

For any given time~$\alpha \in \R$, $\mathpzc{M}$ defines a \emph{genealogy
of $V$ on $\OCInterval{-\infty, \alpha}$}. Taking $\alpha = 0$ and working in
backward time, i.e.\ writing $t \geq 0$ to refer to the absolute time~$-t$,
this genealogy is described by a collection of \emph{ancestor functions}
$a_{t}$, $t \in \COInterval{0, +\infty}$, $a_t\colon V \to V$,
defined as follows: $(a_{t})_{t \geq 0}$ is the piecewise constant
process such that
\begin{mathlist}
\item $a_0$ is the identity on $V$.
\item If $t \in M_{(ij)}$ then 
   \begin{itemize}
     \item For all $k$ such that $a_{t-}(k) = i$, $a_t(k) = j$.
     \item For all $k$ such that $a_{t-}(k) \neq i$, $a_t(k) = a_{t-}(k)$.
   \end{itemize}
\item If for all $(ij) \in V^2$,
  $M_{{(ij)}} \cap \ClosedInterval{s, t} = \emptyset$, then
  $a_t = a_s$.
\end{mathlist}

We refer to $a_t(i)$ as the ancestor of $i$ at time~$t$ before the
present -- or, more simply, as the \emph{ancestor of $i$ at time~$t$}.
\end{definition}

The standard Moran process is closely related to the Kingman
coalescent~\cite{Kingman1982}. Indeed, let $\mathcal{R}_t$ denote the
equivalence relation on $V$ defined by
\[
  i \, \mathcal{R}_t \, j \iff a_t(i) = a_t(j) \, , 
\]
and let $K_t = V / \mathcal{R}_t$ be the partition of $V$ induced by
$\mathcal{R}_t$. Then, $(K_t)_{t \geq 0}$ is a Kingman coalescent on~$V$.
In particular, we will frequently use the next lemma.

\begin{lemma} \label{propMoran}
Let $(a_t)_{t \geq 0}$ be the ancestor functions of a standard Moran process
on~$V$. For any $i \neq j$, let
\[
  T_{\{ij\}} = \inf\Set{t \geq 0 \suchthat a_t(i) = a_t(j)}
\]
be the coalescence time of $i$ and $j$ and, for any $S \subset V$, let
\[
  T_S = \inf\Set*{T_{\{ij\}} \suchthat i, j \in S, i \neq j} \, .
\]
Then, for all $t \geq 0$, conditional on $\Set{T_S > t}$, $(T_S - t)$ is an
exponential variable with parameter~$\binom{|S|}{2}$.
\end{lemma}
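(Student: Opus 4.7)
The plan is to combine the Markov property of the driving measure $\mathpzc{M}$ with a direct analysis of the competing Poisson point processes, in order to identify the conditional distribution of $T_S - t$ given $\{T_S > t\}$ as $\mathrm{Exp}(\binom{|S|}{2})$. First note that on $\{T_S > t\}$, the map $a_t|_S$ is injective by the very definition of $T_S$, so $\{a_t(i) : i \in S\}$ consists of $|S|$ distinct sites of $V$.

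To handle the conditioning, I would invoke the semigroup relation $a_s = \tilde{a}_{s-t} \circ a_t$ for $s \geq t$, where $(\tilde{a}_u)_{u \geq 0}$ is the ancestor process built from the part of $\mathpzc{M}$ lying further than $t$ in the past, translated so as to start at time $0$. By the independence and translation invariance of Poisson point processes, this translated restriction has the same law as $\mathpzc{M}$ and is independent of $a_t$. Consequently, conditional on $a_t$ and on $\{T_S > t\}$, the variable $T_S - t$ has the same distribution as the first coalescence time of an independent standard Moran process started with $|S|$ lineages at the $|S|$ distinct sites $\{a_t(i) : i \in S\}$. The problem therefore reduces to showing that, starting from $k = |S|$ lineages at any $k$ distinct sites of $V$, the first merger time is $\mathrm{Exp}(\binom{k}{2})$.

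To establish this, let $(v_1(u), \ldots, v_k(u))$ denote the positions of the $k$ lineages at time $u$. As long as they are pairwise distinct, a merger occurs precisely when an atom of $M_{(v_a v_b)}$ appears for some ordered pair $(a, b)$ with $a \neq b$: there are $k(k-1)$ such processes, each of rate $\tfrac{1}{2}$, so the total merger rate equals $\binom{k}{2}$ and, crucially, does not depend on which sites are currently occupied. Atoms of $M_{(v_a w)}$ with $w \notin \{v_1, \ldots, v_k\}$ merely relocate a single lineage to a fresh site without causing a merger, while all other atoms leave the $k$ lineages unchanged. The main obstacle is turning this constancy of the merger rate through successive non-merger jumps into an exponential distribution for the first merger time: I would do this either by superposing the $M_{(ij)}$'s into a single marked Poisson point process on $\R \times V^2$ and applying the marking/thinning theorem, or by iterating the strong Markov property at each non-merger jump together with competing exponentials. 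Either approach yields that the first merger time is $\mathrm{Exp}(\binom{k}{2})$, which completes the proof.
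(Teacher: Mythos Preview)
Your argument is correct. The semigroup relation $a_s = \tilde{a}_{s-t}\circ a_t$ together with the independence of increments of $\mathpzc{M}$ cleanly reduces the conditional statement to the unconditional one, and your analysis of the competing Poisson clocks is sound: the key observation that the merger rate is $\binom{k}{2}$ \emph{regardless of which $k$ sites are currently occupied} is exactly what makes the strong-Markov iteration (or, equivalently, the geometric-sum-of-exponentials computation) go through.

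The paper, however, does not actually prove this lemma. It simply records that the partition-valued process $K_t = V/\mathcal{R}_t$ induced by $(a_t)_{t\geq 0}$ is a Kingman coalescent on $V$ and cites standard references. From that standpoint the lemma is immediate: by the sampling consistency of Kingman's coalescent, the restriction of $(K_t)$ to $S$ is a Kingman coalescent on $S$, whose first coalescence time is $\mathrm{Exp}\!\big(\binom{|S|}{2}\big)$, and the conditional statement then follows from the memoryless property of the exponential law. So the paper's route is a one-line appeal to a structural fact, whereas yours is a self-contained verification from the Poissonian construction. Your approach buys independence from the coalescent literature and makes the mechanism transparent; the paper's buys brevity and situates the result in a familiar framework.
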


For a more general introduction to Kingman's coalescent and Moran's model, one
can refer to e.g.\ \cite{Durrett2008} or \cite{Etheridge2011}.

\subsection{Backward construction} \label{secBackwardConstruction}

We now turn to the description of the coalescent framework on which our
study relies. The crucial observation is that, for $t$ large enough,
every edge of $G_n(t)$ can ultimately be traced back to an initial edge that
was inserted between a duplicating vertex and its copy. To find out whether
two vertices $i$ and~$j$ are linked in $G_n(t)$ , we can trace back the ancestry
of the potential link between them and see whether the corresponding initial
edge and its subsequent copies survived up to time~$t$. The first part of this
procedure depends only on the vertex duplication process and, conditional on
the sequence of ancestors of $\{ij\}$, the second one depends only on the edge
removal process, making the whole procedure tractable. The next proposition
formalizes these ideas.

\begin{proposition} \label{propBackwardRepresentation}
Let $V = \Set{1, \ldots, n}$ and let $V^{(2)}$ be the set of unordered pairs of
elements of~$V$. Let $\mathpzc{M}$ be the driving measure of a standard Moran
process on $V$, and $(a_t)_{t \geq 0}$ the associated ancestor functions
(that is, for each $i$ in $V$, $a_t(i)$ is the ancestor of~$i$ at time~$t$).
Let $\mathpzc{P} = (P_{\{ij\}})_{\{ij\} \in
V^{(2)}}$ be a collection of i.i.d.\ Poisson point processes with rate~$r_n$
on $\COInterval{0, +\infty}$ such that $\mathpzc{M}$ and $\mathpzc{P}$ are
independent.  For every pair $\{ij\}\in V^{(2)}$, define
\[
  P^\star_{\{ij\}} = 
  \Set{t \geq 0 \suchthat t \in P_{\{a_t(i) a_t(j)\}}} \, , 
\]
with the convention that, $\forall k \in V$, $P_{\{k\}} = \emptyset$. Finally,
let $G = \mleft(V, E\mright)$ be the graph defined by
\[
  E = \Set{\{ij\} \in V^{(2)} \suchthat P^\star_{\{ij\}} = \emptyset} \, .
\]
Then, $G \sim G_{n, r_n}$.
\end{proposition}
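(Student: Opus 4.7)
The plan is to realize $(G_n(t))_{t \geq 0}$ forward in time from a Poissonian representation, to characterize $E_n(T)$ via a backward tracing along the ancestor functions of a dual Moran process, and then to let $T \to +\infty$ invoking the convergence $G_n(T) \tendsto[\mathcal{D}]{T \to +\infty} G_{n, r_n}$ that follows from the irreducibility of the chain established in Section~\ref{secFormalDescription}. Concretely, I would construct $(G_n(t))_{t \in [0, T]}$ on a product space carrying two independent families of Poisson point processes on $[0, T]$: a family $\widetilde{\mathpzc{M}} = (\widetilde{M}_{(ij)})_{(ij) \in V^2}$ of i.i.d.\ rate-$\tfrac{1}{2}$ processes driving the duplications (working in the time unit that makes the duplication subdynamics a standard Moran process, which rescales the time of Section~\ref{secFormalDescription} by a factor $\tfrac{n-1}{2}$ and thus turns the edge-removal rate $\rho_n$ into $r_n$), and a family $\widetilde{\mathpzc{P}} = (\widetilde{P}_{\{kl\}})_{\{kl\} \in V^{(2)}}$ of i.i.d.\ rate-$r_n$ processes governing the edge removals (an atom at a time when the corresponding edge is absent having no effect). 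Started from any deterministic initial graph, this yields a version of the chain with the correct generator.

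The core of the argument is the following backward-tracing identity. Let $(a_s^T)_{s \in [0, T]}$ be the ancestor functions built from $\widetilde{\mathpzc{M}}$ backward from $T$ as in Definition~\ref{defStandardMoran}, and let $T_{\{ij\}}^T$ denote the corresponding MRCA of the pair~$\{ij\}$. On the event $\{T_{\{ij\}}^T < T\}$,
\[
  \{ij\} \in E_n(T)
  \iff
  \forall s \in [0, T_{\{ij\}}^T], \;\; T - s \notin \widetilde{P}_{\{a_s^T(i),\, a_s^T(j)\}} \,.
\]
I would verify this by induction on the successive events in $[0, T]$, traversed from~$T$ back to~$0$: between events the ancestor pair and the edge indicator at that pair are both constant; an atom of $\widetilde{P}_{\{kl\}}$ where $\{kl\}$ is the current ancestor pair makes both sides~$0$; a duplication event moving one of the two ancestors without causing coalescence leaves both sides unchanged, thanks to the vertex-duplication rewiring rule; and the duplication event causing coalescence at backward time~$T_{\{ij\}}^T$ creates the edge at its ancestor pair by the same rule.

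To conclude, two standard invariances identify the backward picture above with that of the proposition: a Poisson point process on $[0, T]$ is invariant in distribution under time reversal, and the law of the standard Moran driving measure is invariant under time shifts. Hence the joint law of $\bigl((a_s^T)_{s \in [0, T]}, (\widetilde{P}_{\{kl\}})_{\{kl\}}\bigr)$ matches that of $\bigl((a_s)_{s \in [0, T]}, (P_{\{kl\}})_{\{kl\}}\bigr)$, so the identity above reads exactly $\{ij\} \in E_n(T) \iff P^\star_{\{ij\}} \cap [0, T] = \emptyset$. Since $T_{\{ij\}}$ is $\mathrm{Exp}(1)$ by Lemma~\ref{propMoran} and therefore a.s.\ finite, the event $\{T_{\{ij\}} < T\}$ has probability tending to~$1$ as $T \to +\infty$, so the dependence on the initial graph washes out; combining with the convergence to stationarity gives $G \sim G_{n, r_n}$. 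The main obstacle is the non-coalescing-duplication case in the induction: when the ancestor of~$i$ jumps between two sites at backward time~$s$, one must check that the edge indicator of the new ancestor pair in forward time just before the event matches that of the old ancestor pair in forward time just after it, and that this local invariance is compatible with the joint reading of all pairs $\{ij\} \in V^{(2)}$ simultaneously.
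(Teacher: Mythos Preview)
Your proposal is correct and takes essentially the same approach as the paper: rescale time so that duplications follow a standard Moran process and edge removals occur at rate~$r_n$, then verify the backward-tracing identity that $\{ij\}$ is an edge iff no edge-removal atom lies on the ancestral lineage of the pair. The only cosmetic difference is that the paper works directly with the two-sided stationary extension of the chain (so that $\bar{G}_n(0)\sim G_{n,r_n}$ outright and no limit is needed), whereas you run the chain on $[0,T]$ from an arbitrary initial condition and let $T\to+\infty$ using convergence to stationarity together with the a.s.\ finiteness of~$T_{\{ij\}}$; both are standard and equivalent ways to access the stationary law.
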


Throughout the rest of this document, we will write $G_{n, r_n}$ for the graph
obtained by the procedure of Proposition~\ref{propBackwardRepresentation}. 

\begin{proof}[Proof of Proposition~\ref{propBackwardRepresentation}]
First, consider the two-sided extension of
$(G_n(t))_{t \geq 0}$, i.e.\ the corresponding stationary process on $\R$
(see, e.g., Section~{7.1} of \cite{Durrett2010}), which by a slight abuse of
notation we note $(G_n(t))_{t \in \R}$. Next, let $(\bar{G}_n(t))_{t \in \R}$
be the time-rescaled process defined by
\[
  \bar{G}_n(t) = G_n(t(n -1)/2)\,.
\]
This time-rescaled process has the same stationary distribution as
$(G_n(t))_{t \in \R}$ and so, in particular, $\bar{G}_n(0) \sim G_{n, r_n}$.

In the time-rescaled process, each vertex duplicates at rate $(n - 1) / 2$ and
each edge disappears at rate $r_n = (n - 1) \rho_n / 2$. All these events
being independent, we see that the vertex duplications correspond to the
atoms of a standard Moran process on $V = \{1, \ldots, n\}$, and the edge
removals to the atoms of $\binom{n}{2}$ i.i.d.\ Poisson point processes with
rate~$r_n$ on $\R$, that are also independent of the Moran process. Thus,
there exists $(\bar{\mathpzc{M}}, \bar{\mathpzc{P}})$ with the same
law as $(\mathpzc{M}, \mathpzc{P})$ from the proposition and such that,
for $t \geq 0$,
\begin{itemize}
  \item If $t \in \bar{M}_{(ij)}$, then $j$ duplicates and replaces $i$
    in $\bar{G}_n(-t)$.
  \item If $t \in \bar{P}_{\{ij\}}$, then if there is an edge between $i$ and
    $j$ in $\bar{G}_n(-t)$, it is removed.
\end{itemize}
Since $(\bar{\mathpzc{M}}, \bar{\mathpzc{P}})$ has the same law as
$(\mathpzc{M}, \mathpzc{P})$, if we show that
\[
  \{ij\} \in \bar{G}_n(0) \iff \bar{P}^\star_{\{ij\}} = \emptyset \,, 
\]
where $\bar{P}^\star_{\{ij\}} = \Set{t \geq 0 \suchthat t \in
\bar{P}_{\{\bar{a}_t(i) \bar{a}_t(j)\}}}$ is the same deterministic function
of $(\bar{\mathpzc{M}}, \bar{\mathpzc{P}})$ as $P^\star_{\{ij\}}$ of
$(\mathpzc{M}, \mathpzc{P})$,
then we will have proved that $\bar{G}_n(0)$ has the same law as the graph
$G$ from the proposition.

Now to see why the edges of $\bar{G}_n(0)$ are exactly the pairs $\{ij\}$ such
that $\bar{P}_{\{ij\}}$ is empty, note that, in the absence of edge-removal
events, $\bar{G}_n(0)$ is the complete graph and the ancestor
the edge $\{ij\}$ at time~$t$ is $\{a_t(i)\, a_t(j)\}$. Conversely, deleting
the edge $\{k\ell\}$ from $\bar{G}_n(-t)$ will remove all of its subsequent
copies from $\bar{G}_n(0)$, i.e.\ all edges $\{ij\}$ such that
$\{a_t(i)\, a_t(j)\} = \{k\ell\}$. Thus, the edges of $\bar{G}_n(0)$ are
exactly the edges that have no edge-removal events on their ancestral lineage
-- i.e, such that $\bar{P}^\star_{\{ij\}} = \emptyset$.
\end{proof}

Proposition~\ref{propBackwardRepresentation} shows that $G_{n, r_n}$ can
be obtained as a deterministic function of the genealogy $(a_t)_{t \geq 0}$
of a Moran process and of independent Poisson point processes. Our next
result shows that, in this construction, $(a_t)_{t \geq 0}$ can be replaced
by a more coarse-grained process -- namely, a Kingman coalescent (note that the
Kingman coalescent contains less information because it only keeps track of
blocks, not of which ancestor corresponds to which block at a given time~$t$).
This will be useful to give a forward construction of $G_{n, r_n}$ in
Section~\ref{secForwardConstruction}.  The proof of this result is
straightforward and can be found in Section~\ref{appBackwardForward} of the
Appendix.

\begin{proposition} \label{propBackwardWithKingman}
Let $(K_t)_{t \geq 0}$ be a Kingman coalescent on $V = \Set{1, \ldots, n}$,
and let $\pi_t(i)$ denote the block containing~$i$ in the corresponding
partition at time~$t$. Let the associated genealogy of pairs be the set 
\[
  \mathcal{G} = \Set*[\Big]{\big(t,\, \{\pi_t(i)\, \pi_t(j)\}\big) \suchthat
    \{ij\} \in V^{(2)},\, t \in \COInterval{0, T_{\{ij\}}}} \,, 
\]
where $T_{\{ij\}} = \inf\Set{t \geq 0 \suchthat \pi_t(i) = \pi_t(j)}$. 
Denote by
\[
  L_{\{ij\}} = \Set*[\Big]{\big(t,\, \{\pi_t(i)\, \pi_t(j)\}\big) \suchthat
   t \in \COInterval{0, T_{\{ij\}}}}
\]
the lineage of $\{ij\}$ in this genealogy. Finally, let $P^\bullet$ be a
Poisson point process with constant intensity $r_n$ on $\mathcal{G}$ and let
$G = (V, E)$, where
\[
  E = \Set{\{ij\} \in V^{(2)} \suchthat P^\bullet \cap L_{\{ij\}} = \emptyset} \, .
\]
Then, $G \sim G_{n, r_n}$.
\end{proposition}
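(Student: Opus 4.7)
The plan is to reduce Proposition~\ref{propBackwardWithKingman} to Proposition~\ref{propBackwardRepresentation} by projecting the construction of the latter down to the level of blocks. Starting from $(\mathpzc{M}, \mathpzc{P})$ as in the previous proposition, I would define $\pi_t$ as the partition of $V$ whose blocks are the level sets of $a_t$. By the standard Moran--Kingman duality underlying Lemma~\ref{propMoran}, $(\pi_t)_{t \geq 0}$ is a Kingman coalescent on $V$, so it can play the role of $(K_t)_{t \geq 0}$ in the statement.

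Next, I would build a candidate $P^\bullet$ by pushing $\mathpzc{P}$ through this block projection: for each atom $t$ of some $P_{\{i^*j^*\}}$ such that $\{i^*, j^*\}$ is an \emph{active} ancestor pair at time $t$ (meaning that $\{i^*, j^*\} = \{a_t(k), a_t(\ell)\}$ for some $k, \ell \in V$), add the point $(t, \{\pi_t(k), \pi_t(\ell)\})$ to $P^\bullet$, the pair of blocks being well-defined because it depends only on $\{i^*, j^*\}$; discard the atom otherwise. At any fixed time $t$, mapping an active ancestor pair to the block pair it labels is a bijection, so if $S_{\{i^*j^*\}} \subseteq \COInterval{0, +\infty}$ denotes the (random) set of times at which $\{i^*, j^*\}$ is active, the construction identifies $\bigsqcup_{\{i^*, j^*\}} S_{\{i^*j^*\}}$ with $\mathcal{G}$ in a way that preserves the natural ``Lebesgue in time, counting in block pairs'' measure. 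Since the $P_{\{i^*j^*\}}$ are i.i.d.\ rate-$r_n$ Poisson processes on $\COInterval{0, +\infty}$ independent of $\mathpzc{M}$ and the $S_{\{i^*j^*\}}$'s are measurable functionals of $\mathpzc{M}$, restricting each $P_{\{i^*j^*\}}$ to $S_{\{i^*j^*\}}$ and pushing forward yields, conditionally on $\mathpzc{M}$, a Poisson point process of intensity $r_n$ on $\mathcal{G}$ by the standard restriction and mapping theorems for Poisson processes. This conditional law depends on $\mathpzc{M}$ only through $(\pi_t)$, which is precisely the independence statement required by the proposition.

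To conclude, the convention $P_{\{k\}} = \emptyset$ makes $t \in P^\star_{\{ij\}}$ equivalent to the existence of an atom of $P^\bullet$ of the form $(t, \{\pi_t(i), \pi_t(j)\})$ with $t < T_{\{ij\}}$, that is, to $P^\bullet \cap L_{\{ij\}} \neq \emptyset$; the edge sets produced by the two constructions thus agree sample-path by sample-path, and the result follows from Proposition~\ref{propBackwardRepresentation}. I expect the main bookkeeping subtlety to lie in the structure of the sets $S_{\{i^*j^*\}}$: a Moran event can relabel the ancestor of a block without modifying the partition, so $S_{\{i^*j^*\}}$ is in general a disjoint union of intervals, and a single segment of $\mathcal{G}$ aggregates contributions from several ancestor-pair phases. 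The bijection with $\mathcal{G}$ nevertheless remains pointwise in time and measure-preserving, so this does not affect the Poisson structure, and the mapping-theorem argument goes through cleanly.
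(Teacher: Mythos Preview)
Your proposal is correct and follows essentially the same route as the paper: both arguments start from the Moran construction of Proposition~\ref{propBackwardRepresentation}, pass to the induced block process $\bar{\pi}_t(i) = \{j : a_t(j) = a_t(i)\}$ (which is Kingman), and push the edge-removal Poisson atoms forward through the measure-preserving bijection between ancestor pairs at time~$t$ and block pairs at time~$t$. The paper packages this as a single map $\psi \colon (t, \{a_t(i)\, a_t(j)\}) \mapsto (t, \{\bar{\pi}_t(i)\, \bar{\pi}_t(j)\})$ from an auxiliary set $\mathcal{G}^\star$ onto $\mathcal{G}$, whereas you phrase it via the restrictions $P_{\{i^*j^*\}} \cap S_{\{i^*j^*\}}$ and the restriction/mapping theorems, but these are two descriptions of the same construction; your remark that the $S_{\{i^*j^*\}}$ are finite unions of intervals because Moran relabelings can swap ancestors without changing the partition is exactly the bookkeeping that the paper's bijection~$\psi$ absorbs silently.
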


We finish this section with a technical lemma that will be useful in the
calculations of Section~\ref{secFirstMoments}. Again, the proof of this result
has no interest in itself and can be found in Section~\ref{appBackwardForward}
of the Appendix.

\begin{lemma} \label{lemmaBackwardConstruction}
Let $S$ be a subset of $V^{(2)}$. Conditional on the measure $\mathpzc{M}$, for
any interval $I \subset \COInterval{0, +\infty}$ such that
\begin{mathlist}
\item For all $\{ij\} \in S$, $\forall t \in I$, $a_t(i) \neq a_t(j)$.
\item For all $\{k\ell\} \neq \{ij\}$ in $S$, $\forall t \in I, \;
  \{a_t(i)\, a_t(j)\} \neq \{a_t(k)\, a_t(\ell)\}$,
\end{mathlist}
$P^\star_{\{ij\}} \cap I$, $\{ij\} \in S$, are independent Poisson point
processes with rate~$r_n$ on~$I$.

Moreover, for any disjoint intervals $I$ and $J$,
$(P^\star_{\{ij\}} \cap I)_{\{ij\} \in S}$ is independent of
$(P^\star_{\{ij\}} \cap J)_{\{ij\} \in S}$.
\end{lemma}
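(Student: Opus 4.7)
My plan is to condition on $\mathpzc{M}$ and reduce everything to the elementary independence properties of independent Poisson point processes on pairwise disjoint subsets of the time axis. Conditional on $\mathpzc{M}$, the ancestor functions $(a_t)_{t \geq 0}$ are deterministic and piecewise constant, while the family $\mathpzc{P} = (P_{\{k\ell\}})$ remains a collection of independent rate-$r_n$ PPPs (using that $\mathpzc{M}$ and $\mathpzc{P}$ are independent by construction). The only ingredients I then need are that the restrictions of independent PPPs to disjoint subsets are mutually independent, and that the restriction of a rate-$r_n$ PPP to a measurable subset is again a rate-$r_n$ PPP on that subset.

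The first key step is to decompose $I = \bigsqcup_m I_m$ along the (locally finite) jump times of $(a_t)$ into a countable disjoint union of subintervals on each of which $a_t$ equals a constant function $\alpha_m \colon V \to V$. For any $\{ij\} \in S$ and any $t \in I_m$, the definition of $P^\star_{\{ij\}}$ then gives
\[
  P^\star_{\{ij\}} \cap I_m \;=\; P_{\{\alpha_m(i)\,\alpha_m(j)\}} \cap I_m\,,
\]
which is well-defined thanks to hypothesis~(i). Hypothesis~(ii) says precisely that, on each $I_m$, the map $\{ij\} \mapsto \{\alpha_m(i)\,\alpha_m(j)\}$ is injective on $S$, so the family $(P^\star_{\{ij\}} \cap I_m)_{\{ij\} \in S}$ consists of restrictions to $I_m$ of $|S|$ \emph{distinct} PPPs from the independent collection $(P_{\{k\ell\}})$. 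These restrictions are therefore independent rate-$r_n$ PPPs on $I_m$. Since the $I_m$'s are pairwise disjoint, the whole array $(P^\star_{\{ij\}} \cap I_m)_{\{ij\},\,m}$ is jointly independent, and concatenating over~$m$ yields the first assertion.

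The second assertion is obtained by applying the same reasoning to the decomposition of $I \sqcup J$, or more directly by noting that, conditional on $\mathpzc{M}$, the tuples $(P^\star_{\{ij\}} \cap I)_{\{ij\} \in S}$ and $(P^\star_{\{ij\}} \cap J)_{\{ij\} \in S}$ are deterministic functions of $(P_{\{k\ell\}} \cap I)_{\{k\ell\}}$ and $(P_{\{k\ell\}} \cap J)_{\{k\ell\}}$ respectively, and that these two restricted collections are independent because $I$ and $J$ are disjoint. I do not foresee any real obstacle: the argument is essentially bookkeeping, and the only delicate point is to check that the jump times of $(a_t)$ form an a.s.\ null set for every $P_{\{k\ell\}}$, so that the precise conventions used at the endpoints of the $I_m$'s are irrelevant.
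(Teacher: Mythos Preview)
Your proposal is correct and follows essentially the same route as the paper's proof: both condition on $\mathpzc{M}$, partition $I$ into subintervals on which the ancestor function is constant, identify $P^\star_{\{ij\}}\cap I_m$ with the restriction of one of the original $P_{\{k\ell\}}$'s, and then invoke the standard independence of i.i.d.\ Poisson point processes restricted to pairwise disjoint sets. The only cosmetic differences are that the paper partitions along the jump times of $t\mapsto\{\{a_t(i)\,a_t(j)\}:\{ij\}\in S\}$ rather than of $(a_t)$ itself (your partition is finer, but this changes nothing), and that you allow a countable decomposition and flag the null-set issue at endpoints, which the paper leaves implicit.
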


Before closing this section, let us sum up our results in words:
if we think of $\{a_t(i)\, a_t(j)\}$ as being the ancestor of $\{ij\}$ at
time~$t$, then the genealogy of vertices induces a genealogy of pairs of
vertices, as illustrated by Figure~\ref{figGenealogyPairs}.
Edge-removal events occur at constant rate~$r_n$ along the branches of this
genealogy and the events affecting disjoint sections of branches are
independent, so that we can think of $P^\star_{\{ij\}}$, $\{ij\} \in V^{(2)}$,
as a single Poisson point process $P^\star$ on the lineages of pairs of
vertices. A pair of vertices is an edge of $G_{n,r_n}$ if and only if there is
no atom of $P^\star$ on its lineage.

\begin{figure}[h!]
  \centering
  \captionsetup{width=0.7\linewidth}
  \includegraphics[width=0.7\linewidth]{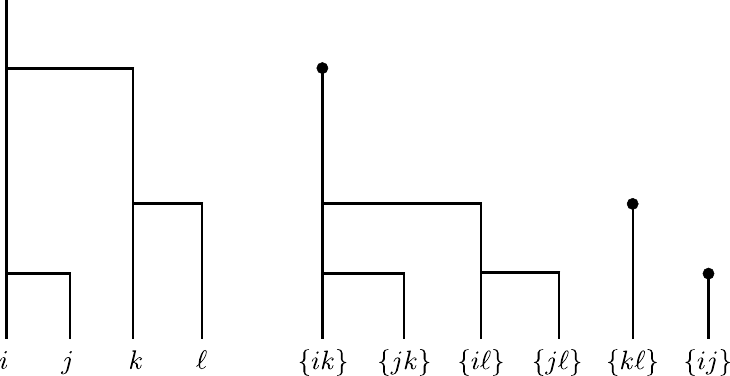}
  \caption[Genealogy of the pairs]
  {On the left, a genealogy on $\Set{i, j, k, \ell}$ and on the right the
  corresponding genealogy of the pairs. Edge removal events occur at constant
  rate along the lineages of pairs of vertices, and a pair of vertices is
  an edge of $G_{n, r_n}$ if and only if there is no atom on its lineage.}
  \label{figGenealogyPairs}
\end{figure}

\pagebreak
\subsection{Forward construction} \label{secForwardConstruction}
We now give a forward version of the coalescent construction
presented in the previous section. Here, unlike in the previous section,
the graph $G_{n, r}$ is built by adding vertices one at a time.
This construction will be useful in proofs and provides a computationally
efficient way to sample $G_{n, r}$.

Consider the Markov process $(G^\dagger_r(t))_{t \geq 0}$ defined by
\begin{mathlist}
\item $G^\dagger_r(0) = (\Set{1, 2}, \Set{\{1,2\}})$ is the complete graph of order~2.
\item Conditional on $V_t = \Set{1, \ldots, n}$, where $V_t$ is the set of
  vertices of $G^\dagger_r(t)$: at rate $\binom{n}{2}$, a vertex is sampled
  uniformly in $V_t$ and duplicated without replacement -- that is, we copy the
  vertex and all incident edges, and label the new vertex~$n + 1$, resulting in
  a graph with vertex set $\Set{1, \ldots, n + 1}$.
\item During the whole process, each edge disappears at constant rate~$r$.
\end{mathlist}
Next, for every integer ${n \geq 2}$,
let $G^\star_r(n) = G^\dagger_r(t_n-)$, where
\[
  t_n = \sup \Set{t \geq 0 \suchthat
  G^\dagger_r(t) \text{ has } n \text{ vertices}}\,.
\]
Finally, let $\Phi_n(G^\star_r(n))$ denote the graph obtained by shuffling
the labels of the vertices of $G^\star_r(n)$ uniformly at random, i.e.\
let $\Phi_n$ be picked uniformly at random among all the permutations of
$\Set{1, \ldots, n}$ and, by a slight abuse of notation, let
\[
  \Phi_n(G^\star_r(n)) =
    \Big(
      \Set{1, \ldots, n}, \,
      \Set*[\big]{\{\Phi_n(i)\, \Phi_n(j)\} \suchthat \{ij\} \in G^\star_r(n)}
    \Big)
\]

\begin{proposition} \label{propForwardConstruction}
For any $r > 0$, for any integer $n \geq 2$, 
\[
  \Phi_n(G^\star_r(n)) \sim G_{n, r} \, .
\]
\end{proposition}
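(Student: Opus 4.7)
The plan is to couple the forward construction to the Kingman-coalescent-based backward construction of Proposition~\ref{propBackwardWithKingman}, and match distributions after applying $\Phi_n$.

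First I would construct the coalescent. Reading $(G^\dagger_r(t))_{0 \leq t \leq t_n}$ backward from $t_n$, each forward duplication becomes a backward merger: the vertex labeled $k$ (born at forward time $t_{k-1}$ from parent $\pi(k)$) merges back into $\pi(k)$ at backward time $t_n - t_{k-1}$. The successive inter-merger times are independent exponentials with parameters $\binom{n}{2}, \binom{n-1}{2}, \ldots, \binom{2}{2}$, matching a full Kingman coalescent on $\{1, \ldots, n\}$; in particular, the total backward duration $t_n$ has the same distribution as the Kingman time-to-MRCA. In the raw backward process the merging block always carries the maximum label, but applying $\Phi_n$ -- a uniform random relabeling independent of everything else -- makes the choice of merging pair exchangeable, so the $\Phi_n$-relabeled partition-valued process has the law of a Kingman coalescent $(K_t)_{t \geq 0}$ on $\{1, \ldots, n\}$.

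Second I would handle the edges. In the forward process I attach to each pair $\{u, v\}$ of vertices an independent rate-$r$ Poisson clock on the time interval during which both $u$ and $v$ exist, with the rule that a ring removes the edge if currently present and does nothing otherwise; this is equivalent by memorylessness to ``each edge disappears at rate $r$''. A straightforward recursion on the number of duplications separating $\{u, v\}$ from its most recent common ancestor (using the ancestor-pair formalism of Section~\ref{secBackwardConstruction}) shows that $\{u, v\}$ is an edge at $t_n$ if and only if no clock ring occurs on an ancestor pair of $\{u, v\}$ while it is the ancestor pair, where the deterministic initial edge $\{1, 2\}$ at $t = 0$ plays the role of the root-branch endpoint in Proposition~\ref{propBackwardWithKingman}. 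By superposition and independence of Poisson processes, the restricted rings form a rate-$r$ Poisson point process on the lineage $L_{\{u, v\}}$, and jointly across pairs they constitute a rate-$r$ Poisson point process $P^\bullet$ on the pair-genealogy $\mathcal{G}$ of the $\Phi_n$-relabeled coalescent, independent of that coalescent by construction.

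Combining the two steps, $\Phi_n(G^\star_r(n))$ is obtained from a Kingman coalescent on $\{1, \ldots, n\}$ and an independent rate-$r$ Poisson process on its pair-genealogy via exactly the recipe of Proposition~\ref{propBackwardWithKingman}, giving $\Phi_n(G^\star_r(n)) \sim G_{n, r}$. I expect the main obstacles to be verifying the exchangeability step (that $\Phi_n$ really converts the biased maximum-label merging into a genuine Kingman coalescent on $\{1, \ldots, n\}$, which I would do by conditioning on the unlabeled coalescent topology and arguing that $\Phi_n$ induces a uniformly random leaf labeling) and checking that the deterministic initial edge at $t = 0$ in the forward construction correctly captures the edge-removal along the root branch of the full Kingman coalescent, using the distributional identity between $t_n$ and $T_{\mathrm{MRCA}}$.
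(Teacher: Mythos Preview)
Your proposal is correct and follows essentially the same route as the paper: read the forward duplication process backward to obtain a coalescent, identify the edge-removal clocks as a rate-$r$ Poisson process on the pair-genealogy, use $\Phi_n$ to turn the biased (maximum-label) mergers into an exchangeable Kingman coalescent, and conclude via Proposition~\ref{propBackwardWithKingman}. The paper handles your two flagged obstacles exactly as you anticipate --- the exchangeability step is dispatched either by citing the Yule/Kingman ranked-shape equivalence or by a short induction showing that, conditional on the first $i-1$ relabeled mergers, the $i$-th merging pair is uniform; and the root-branch issue is absorbed automatically once one observes that the forward interval $[0,t_2)$ with two vertices has $\mathrm{Exp}(1)$ length, so the backward process run to time $t_n$ matches a full Kingman coalescent down to one block.
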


Going from a backward construction such as
Proposition~\ref{propBackwardWithKingman} to a forward construction such as
Proposition~\ref{propForwardConstruction} is common in coalescent theory. The
proofs, though straightforward, are somewhat tedious. They can be found in
Section~\ref{appBackwardForward} of the Appendix, and we save the rest of this
section to comment on the forward construction.

Proposition~\ref{propForwardConstruction} shows that,
for any given sequence $(r_n)$, for any $n \geq 2$,
$\Phi_n(G^\star_{r_n}(n)) \sim G_{n, r_n}$. Note however that this is \emph{not}
a compatible construction of a sequence
$(G_{n, r_n})_{n \geq 2}$. In particular, all elements of a sequence
$(\Phi_n(G^\star_r(n)))_{n \geq 2}$ are associated to the same value of $r$,
while each term of a sequence $(G_{n, r_n})_{n \geq 2}$ corresponds to a
different value of $r_n$.

Finally, it is necessary to relabel the vertices of $G^\star_r(n)$ in
Proposition~\ref{propForwardConstruction}, as failing to do so
would condition on $\{k, k - 1\}$ being the ${(n - k + 1)}$-th pair of vertices
to coalesce in the genealogy of $G_{n, r}$ (in particular, the edges of
$G^\star_r(n)$ are not exchangeable: ``old'' edges such as $\{1, 2\}$ are
least likely to be present than more recent ones such as $\{n-1, n\}$).
However, since $G^\star_r(n)$ and $\Phi_n(G^\star_r(n))$ are isomorphic,
when studying properties that are invariant under graph isomorphism (such as
the number of connected components in Section~\ref{secConnectedComponents} or
the positive association of the edges in Section~\ref{secNumberOfEdges}), we can
work directly on $G^\star_r(n)$.


\section{First and second moment methods}

In this section, we apply Proposition~\ref{propBackwardRepresentation} and
Lemma~\ref{lemmaBackwardConstruction} to obtain the expressions presented in
Table~\ref{tabFirstMoments}. These are then used to identify different regimes
for $G_{n, r_n}$, depending on the asymptotic behavior of the parameter~$r_n$.

In order to be able to use Lemma~\ref{lemmaBackwardConstruction}, we will
always reason conditionally on the genealogy of the vertices (i.e.\ on
the vertex duplication process $\mathpzc{M}$) and then integrate against its
law.

\subsection{First moments of graph invariants} \label{secFirstMoments}

\subsubsection{Degree and number of edges} \label{secMomentsDegreeEdges}

\begin{proposition} \label{propProbaEdge}
For any fixed vertices $i$ and $j$, $i \neq j$, the probability that
$i$ and $j$ are linked in $G_{n, r_n}$ is
\[
  \Prob{i \lkto j} = \frac{1}{1 + r_n} \, .
\]
\end{proposition}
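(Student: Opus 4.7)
The plan is to apply the backward coalescent representation of Proposition~\ref{propBackwardRepresentation} together with the Poissonian structure provided by Lemma~\ref{lemmaBackwardConstruction}, and to integrate against the law of the coalescence time given by Lemma~\ref{propMoran}. The key observation is that, under this construction, the pair $\{ij\}$ is an edge of $G_{n, r_n}$ if and only if $P^\star_{\{ij\}} = \emptyset$, i.e.\ there is no edge-removal event along the ancestral lineage of the pair $\{ij\}$.

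First, I would note that since $P_{\{k\}} = \emptyset$ for every singleton, atoms of $P^\star_{\{ij\}}$ can only occur on the time interval $\COInterval{0, T_{\{ij\}}}$, where $T_{\{ij\}} = \inf\Set{t \geq 0 \suchthat a_t(i) = a_t(j)}$ is the coalescence time of $i$ and $j$. Applied to $S = \{i, j\}$, Lemma~\ref{propMoran} states that $T_{\{ij\}}$ is an exponential variable with parameter $\binom{2}{2} = 1$.

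Next, I would condition on the driving measure $\mathpzc{M}$ and apply Lemma~\ref{lemmaBackwardConstruction} to the single pair $S = \{\{ij\}\}$ on the interval $I = \COInterval{0, T_{\{ij\}}}$: conditions (i) and (ii) of the lemma are trivially satisfied, so $P^\star_{\{ij\}} \cap \COInterval{0, T_{\{ij\}}}$ is a Poisson point process with rate $r_n$ on an interval of length $T_{\{ij\}}$. Therefore
\[
  \Prob{i \lkto j \,\big|\, T_{\{ij\}}}
  = \Prob{P^\star_{\{ij\}} = \emptyset \,\big|\, T_{\{ij\}}}
  = e^{-r_n T_{\{ij\}}}.
\]

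Finally, integrating against the $\mathrm{Exp}(1)$ law of $T_{\{ij\}}$ gives
\[
  \Prob{i \lkto j}
  = \int_0^{+\infty} e^{-r_n t} e^{-t}\, dt
  = \frac{1}{1 + r_n}.
\]
There is no real obstacle here: the whole proof is essentially the observation that an edge survives with the probability that a Poisson process of rate $r_n$ has no atoms on an independent exponential interval of rate $1$. The only point that warrants a sentence of care is the justification that the relevant interval for edge-removal events is indeed $\COInterval{0, T_{\{ij\}}}$, which follows immediately from the convention $P_{\{k\}} = \emptyset$.
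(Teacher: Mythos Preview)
Your proof is correct and follows essentially the same approach as the paper: both use the backward construction to reduce the event $\{i \lkto j\}$ to the absence of atoms in a rate-$r_n$ Poisson process on $\COInterval{0, T_{\{ij\}}}$, then integrate against the $\mathrm{Exp}(1)$ law of $T_{\{ij\}}$. The only cosmetic difference is that the paper phrases the final step as $\Prob{e_1 > T_{\{ij\}}}$ with $e_1 \sim \mathrm{Exp}(r_n)$ independent of $T_{\{ij\}}$, whereas you compute the Laplace transform $\Expec{e^{-r_n T_{\{ij\}}}}$ directly.
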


\begin{corollary} \label{colExpectedDegree}
Let $D_n$ be the degree of a fixed vertex of $G_{n, r_n}$, and $\Abs{E_n}$ be
the number of edges of $G_{n, r_n}$. Then,
\[
  \Expec{D_n} = \frac{n - 1}{1 + r_n}
  \quad\text{and}\quad
  \Expec{\Abs{E_n}} = \binom{n}{2} \frac{1}{1 + r_n} \, .
\]
\end{corollary}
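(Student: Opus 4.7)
The plan is to derive this corollary directly from Proposition~\ref{propProbaEdge} via linearity of expectation, using the standard indicator-variable decomposition of the degree and the edge count.

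First I would fix a vertex~$i$ and write
\[
  D_n \;=\; \sum_{j \in V \setminus \{i\}} \Indic{i \lkto j}.
\]
Taking expectations and applying Proposition~\ref{propProbaEdge} to each of the $n - 1$ summands gives $\Expec{D_n} = (n-1)/(1 + r_n)$. Note that the distribution of $D_n$ does not depend on the choice of $i$ because the law $\mu_{n, \rho_n}$ is invariant under relabeling of the vertices (the construction of Proposition~\ref{propBackwardRepresentation} only involves i.i.d.\ Poisson point processes indexed by ordered pairs and unordered pairs of $V$, so any permutation of $V$ preserves the joint law).

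For the number of edges I would write
\[
  \Abs{E_n} \;=\; \sum_{\{ij\} \in V^{(2)}} \Indic{i \lkto j},
\]
and, again by linearity and Proposition~\ref{propProbaEdge}, obtain $\Expec{\Abs{E_n}} = \binom{n}{2}/(1+r_n)$. Alternatively, one can use the handshake identity $2\Abs{E_n} = \sum_{i \in V} D_n^{(i)}$ together with the first formula to get the same result.

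There is essentially no obstacle: the entire content is in Proposition~\ref{propProbaEdge}, and the corollary is a one-line application of linearity of expectation plus vertex exchangeability.
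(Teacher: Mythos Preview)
Your proposal is correct and matches the paper's proof essentially verbatim: the paper also writes $D^{(v)}_n = \sum_{i \neq v}\Indic{i \lkto v}$ and $\Abs{E_n} = \sum_{\{ij\}\in V^{(2)}}\Indic{i\lkto j}$ and applies linearity together with Proposition~\ref{propProbaEdge}. Your remarks on exchangeability and the handshake identity are accurate but not needed here.
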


\begin{proof}
By Proposition~\ref{propBackwardRepresentation},
\[
  \Set{i \lkto j}  \iff
  P^\star_{\{ij\}} \cap \COInterval[\normalsize]{0, T_{\{ij\}}} = \emptyset \, .
\]
Reasoning conditionally on $T_{\{ij\}}$ and applying
Lemma~\ref{lemmaBackwardConstruction} to $S = \Set{\{ij\}}$ and
$I = \COInterval[\normalsize]{0, T_{\{ij\}}}$, we see that
$P^\star_{\{ij\}}$ is a Poisson point process with rate~$r_n$ on $I$.
Since $T_{\{ij\}} \sim \mathrm{Exp}(1)$,
\[
  \Prob{i \lkto j} = \Prob{e_1 > T_{\{ij\}}} \, , 
\]
where $e_1 = \inf P^\star_{\{ij\}}$ is an exponential variable with rate~$r_n$
that is independent of~$T_{\{ij\}}$.

The corollary follows directly from the fact that the degree of a vertex~$v$
can be written as
\[
  D^{(v)}_n = \sum_{i \neq v}\, \Indic{i \lkto v}
\]
and that the number of edges of $G_{n, r_n}$ is
\[
  \Abs{E_n} = \sum_{\;\{ij\} \in V^{(2)}}\!\!\!\! \Indic{i \lkto j} \, . \qedhere
\]
\end{proof}

\begin{proposition} \label{propCovOverlappingEdges}
Let $i$, $j$ and $k$ be three distinct vertices of $G_{n, r_n}$. We have
\[
  \Cov{\Indic{i \lkto j}, \Indic{i \lkto k}}
  = \frac{r_n}{(3 + 2 r_n)(1 + r_n)^2}
\]
\end{proposition}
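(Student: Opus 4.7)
The plan is to write the covariance as
\[
  \Cov{\Indic{i \lkto j}, \Indic{i \lkto k}}
  = \Prob{i \lkto j \text{ and } i \lkto k} - \frac{1}{(1 + r_n)^2}
\]
using Proposition~\ref{propProbaEdge}, and then compute the joint probability by conditioning on the Kingman coalescent restricted to $\{i, j, k\}$. I find it cleanest to argue via the genealogy-of-pairs picture from Proposition~\ref{propBackwardWithKingman}: both edges are present iff the rate-$r_n$ Poisson process $P^\bullet$ puts no atom on the union $L_{\{ij\}} \cup L_{\{ik\}}$, so conditional on the genealogy the joint probability is $\exp\!\bigl(-r_n \operatorname{Leb}(L_{\{ij\}} \cup L_{\{ik\}})\bigr)$.

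The coalescent on the three lineages is described by two independent waiting times: first coalescence at $T \sim \mathrm{Exp}(3)$ and second coalescence at additional time $T' \sim \mathrm{Exp}(1)$, together with the identity of the first coalescing pair $C$ (each of $\{ij\}, \{ik\}, \{jk\}$ with probability $1/3$). The key observation is that in each of the three cases the total length $\operatorname{Leb}(L_{\{ij\}} \cup L_{\{ik\}})$ equals $2T + T'$: if $C = \{jk\}$ the two pair-lineages are disjoint of combined length $2T$ on $[0, T)$ and then merge into a single lineage of length $T'$; while if $C = \{ij\}$ (resp.\ $C = \{ik\}$) the lineage $L_{\{ij\}}$ (resp.\ $L_{\{ik\}}$) ends at $T$ but $L_{\{ik\}}$ (resp.\ $L_{\{ij\}}$) continues for another $T'$, and the two pair-lineages remain disjoint on $[0, T)$ since their pair-ancestors are distinct. (One could equivalently invoke Lemma~\ref{lemmaBackwardConstruction} directly: on $[0, T)$ the conditions on $S = \{\{ij\},\{ik\}\}$ are satisfied, and on $[T, T+T')$ only one pair-lineage remains, and processes on disjoint intervals are independent.)

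It then follows that
\[
  \Prob{i \lkto j \text{ and } i \lkto k}
  = \Expec{e^{-r_n(2T + T')}}
  = \Expec{e^{-2 r_n T}} \, \Expec{e^{-r_n T'}}
  = \frac{3}{3 + 2 r_n} \cdot \frac{1}{1 + r_n}\,,
\]
by independence of $T$ and $T'$ and the standard exponential Laplace transform. Substituting back yields
\[
  \Cov{\Indic{i \lkto j}, \Indic{i \lkto k}}
  = \frac{3}{(3 + 2 r_n)(1 + r_n)} - \frac{1}{(1 + r_n)^2}
  = \frac{r_n}{(3 + 2 r_n)(1 + r_n)^2}\,,
\]
which is the claimed expression. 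The only subtle point is the case analysis showing that the combined lineage length is always $2T + T'$; everything else is an application of Lemma~\ref{lemmaBackwardConstruction} and elementary integration.
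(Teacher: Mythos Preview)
Your proof is correct and follows essentially the same route as the paper's: both identify that, regardless of which pair coalesces first, the event $\{i \lkto j,\, i \lkto k\}$ amounts to a rate-$r_n$ Poisson process placing no atom on a set of total length $2\tau_1 + (\tau_2 - \tau_1)$, yielding $\Prob{i \lkto j,\, i \lkto k} = \tfrac{3}{3+2r_n}\cdot\tfrac{1}{1+r_n}$. The only cosmetic difference is that you package the argument via Proposition~\ref{propBackwardWithKingman} and a Laplace transform, whereas the paper phrases it through Lemma~\ref{lemmaBackwardConstruction} and competing exponentials.
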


\begin{corollary} \label{colVarDegree}
Let $D_n$ be the degree of a fixed vertex of $G_{n, r_n}$. We have
\[
  \Var{D_n} = \frac{r_n (n - 1) (1 + 2\,r_n + n)}{(1 + r_n)^2\,(3 + 2\,r_n)}
\]
\end{corollary}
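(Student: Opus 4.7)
The plan is to expand $D_n$ as a sum of edge indicators and apply bilinearity of the variance, using Proposition~\ref{propProbaEdge} and Proposition~\ref{propCovOverlappingEdges} as black boxes. Fix the vertex $v$ and write
\[
  D_n = \sum_{i \in V \setminus\{v\}} \Indic{i \lkto v}.
\]
Then
\[
  \Var{D_n} = \sum_{i \neq v} \Var{\Indic{i \lkto v}}
  + \sum_{\substack{i \neq j \\ i,j \neq v}} \Cov{\Indic{i \lkto v}, \Indic{j \lkto v}}.
\]
The diagonal terms are Bernoulli variances: by Proposition~\ref{propProbaEdge}, $\Indic{i \lkto v}$ is Bernoulli with parameter $1/(1+r_n)$, so $\Var{\Indic{i \lkto v}} = r_n/(1+r_n)^2$. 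For the off-diagonal terms, observe that the two edges $\{iv\}$ and $\{jv\}$ share the common endpoint $v$, so Proposition~\ref{propCovOverlappingEdges} applies and gives a covariance of $r_n / \bigl((3+2r_n)(1+r_n)^2\bigr)$.

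There are $n-1$ diagonal terms and $(n-1)(n-2)$ off-diagonal terms, which yields
\[
  \Var{D_n}
  = \frac{r_n(n-1)}{(1+r_n)^2}
    + \frac{r_n(n-1)(n-2)}{(1+r_n)^2 (3+2r_n)}
  = \frac{r_n(n-1)}{(1+r_n)^2}\left(1 + \frac{n-2}{3+2r_n}\right).
\]
Combining the fractions inside the parentheses gives $(n+1+2r_n)/(3+2r_n)$, which produces the claimed formula.

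There is no real obstacle: once the covariance of two adjacent edge-indicators has been established in Proposition~\ref{propCovOverlappingEdges}, the computation of $\Var{D_n}$ is a two-line algebraic simplification. The only point to check is that every pair $\{iv\}, \{jv\}$ with $i,j \neq v$ distinct does share a common end (namely $v$), so that the ``vertex in common'' case of the covariance is the only one that appears — no edge pair of the form treated by the ``otherwise'' branch of Table~\ref{tabFirstMoments} enters the sum.
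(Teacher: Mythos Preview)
Your proof is correct and follows exactly the same approach as the paper: write $D_n = \sum_{i \neq v} \Indic{i \lkto v}$, expand the variance by bilinearity into $(n-1)$ variance terms and $(n-1)(n-2)$ covariance terms, and substitute the values from Propositions~\ref{propProbaEdge} and~\ref{propCovOverlappingEdges}. The algebraic simplification is identical.
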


\begin{proof}
For all $t \geq 0$, let $S_t = \Set{a_t(i), a_t(j), a_t(k)}$.
Let $\tau_1 = \inf \Set{t \geq 0 \suchthat |S_t| = 2}$ and
$\tau_2 = \inf \Set{t \geq \tau_1 \suchthat |S_t| = 1}$. Recall from
Lemma~\ref{propMoran} that $\tau_1$ and $\tau_2 - \tau_1$ are
independent exponential variables with parameter~$3$ and~$1$, 
respectively. Finally, let $\Set{u, v} = S_{\tau_1}$.

By Proposition~\ref{propBackwardRepresentation}, $\{ij\}$ and $\{ik\}$ are
edges of $G_{n, r_n}$ if and only if
$P^\star_{\{ij\}} \cap \COInterval[\normalsize]{0, T_{\{ij\}}}$ and
$P^\star_{\{ik\}} \cap \COInterval[\normalsize]{0, T_{\{ik\}}}$ are empty,
which can also be written
\[
\big(P^\star_{\{ij\}} \cap \COInterval{0, \tau_1}\big) \cup 
\big(P^\star_{\{ik\}} \cap \COInterval{0, \tau_1}\big) \cup 
\big(P^\star_{\{uv\}} \cap \COInterval{\tau_1, \tau_2}\big) = \emptyset
\]
Conditionally on $\tau_1$ and $\tau_2$, by Lemma~\ref{lemmaBackwardConstruction}, 
$(P^\star_{\{ij\}} \cap \COInterval{0, \tau_1}) \cup
(P^\star_{\{ik\}} \cap \COInterval{0, \tau_1})$ is independent of
$P^\star_{\{uv\}} \cap \COInterval{\tau_1, \tau_2}$, $P^\star_{\{ij\}}$ and
$P^\star_{\{ik\}}$ are independent Poisson point processes with rate~$r_n$ on
$\COInterval{0, \tau_1}$, and $P^\star_{\{uv\}}$ is a Poisson point process
with rate~$r_n$ on $\COInterval{\tau_1, \tau_2}$. Therefore,
\[
  \Prob{i \lkto j, i \lkto k} =
  \Prob{e_1 > \tau_1}\,\Prob{e_2 > \tau_2 - \tau_1} \, , 
\]
where $e_1 = \inf (P^\star_{\{ij\}} \cup P^\star_{\{ik\}}) \sim
\mathrm{Exp}(2\,r_n)$ is independent of $\tau_1$ and
$e_2 = \inf (P^\star_{\{uv\}} \cap \COInterval{\tau_1, +\infty}) \sim
\mathrm{Exp}(r_n)$ is independent of $\tau_2 - \tau_1$. As a result,
\[
  \Prob{i \lkto j, i \lkto k} = \frac{3}{3 + 2\,r_n} \times \frac{1}{1 + r_n}\,.
\]
A short calculation shows that
\[
  \Cov{\Indic{i \lkto j}, \Indic{i \lkto k}}
  = \frac{r_n}{(3 + 2 r_n)(1 + r_n)^2} \, ,
\]
proving the proposition.

As before, the corollary follows from writing the degree of $v$ as
$D^{(v)}_n = \sum_{i \neq v}\, \Indic{i \lkto v}$, which gives
\[
  \Var{D^{(v)}_n} = (n - 1) \Var{\Indic{i \lkto v}} +
  (n - 1) (n - 2) \Cov{\Indic{i \lkto v}, \Indic{j \lkto v}} \, .
\]
Substituting $\Var{\Indic{i \lkto v}} = r_n / (1 + r_n)^2$ and
$\Cov{\Indic{i \lkto v}, \Indic{j \lkto v}}$ yields the desired expression.
\end{proof}

\begin{proposition} \label{propCovDisjointEdges}
Let $i$, $j$, $k$ and $\ell$ be four distinct vertices of $G_{n, r_n}$.
We have
\[
  \Cov{\Indic{i \lkto j}, \Indic{k \lkto \ell}} = 
  \frac{2\,r_n}{(1 + r_n)^2 (3 + r_n) (3 + 2\,r_n)}
\]
\end{proposition}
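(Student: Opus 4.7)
The approach is to mimic the coalescent strategy used for Proposition~\ref{propCovOverlappingEdges}, adapted to four distinct vertices. By Proposition~\ref{propBackwardRepresentation} and Lemma~\ref{lemmaBackwardConstruction}, conditional on the Moran process $\mathpzc{M}$ the probability that both $\{ij\}$ and $\{k\ell\}$ are edges of $G_{n,r_n}$ equals $\exp(-r_n L)$, where $L$ denotes the total length, in the pair-genealogy of Proposition~\ref{propBackwardWithKingman}, of the union of the two pair-lineages. Thus $\Prob{i \lkto j,\, k \lkto \ell} = \Expec{\exp(-r_n L)}$, and the task reduces to identifying the law of $L$ as a function of the coalescence events of $i, j, k, \ell$. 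I introduce $\tau_1 < \tau_2 < \tau_3$, the three coalescence times of these four vertex-lineages; by Lemma~\ref{propMoran}, $\tau_1$, $\tau_2 - \tau_1$ and $\tau_3 - \tau_2$ are independent exponentials with parameters $6$, $3$ and $1$. The natural split is according to which of the $\binom{4}{2} = 6$ pairs coalesces first.

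In Case~A, of probability $1/3$, the first merger is $\Set{i, j}$ or $\Set{k, \ell}$. One pair-lineage ends at $\tau_1$ while the other is still carried by two distinct vertex-ancestors; applying Lemma~\ref{propMoran} to that pair, its residual coalescence time is an independent $\mathrm{Exp}(1)$. The two pair-lineages never share an ancestor-pair, so $L = 2\tau_1 + e$ with $e \sim \mathrm{Exp}(1)$ independent of $\tau_1$. In Case~C, of probability $2/3$, the first coalescence crosses the partition (one vertex of $\Set{i, j}$ merges with one of $\Set{k, \ell}$). The ancestor pairs of $\{ij\}$ and $\{k\ell\}$ then share one vertex but remain distinct throughout $[\tau_1, \tau_2)$, so both pair-lineages are disjoint and alive on $[0, \tau_2)$. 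The crucial observation is that whichever of the three possible second coalescences occurs, the total length equals $L = 2\tau_2 + (\tau_3 - \tau_2) = \tau_2 + \tau_3$: either the second coalescence completes one of the pairs (that pair-lineage dies at $\tau_2$ while the other survives up to $\tau_3$), or it merges the two still-unmerged singletons (in which case the two pair-lineages coincide on $[\tau_2, \tau_3)$). In every sub-case the two pair-lineages collectively cover $[0, \tau_2)$ twice and $[\tau_2, \tau_3)$ once.

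Once this is established, $\Expec{e^{-r_n L} \mid A}$ and $\Expec{e^{-r_n L} \mid C}$ are computed by routine exponential-MGF arithmetic, yielding $3/((3 + r_n)(1 + r_n))$ and $9/((1 + r_n)(3 + r_n)(3 + 2r_n))$ respectively. Weighting by $1/3$ and $2/3$ gives $\Prob{i \lkto j,\, k \lkto \ell} = (9 + 2r_n)/((1 + r_n)(3 + r_n)(3 + 2r_n))$, and subtracting $\Prob{i \lkto j}\Prob{k \lkto \ell} = 1/(1 + r_n)^2$ from Proposition~\ref{propProbaEdge} produces the announced covariance after simplification. The main obstacle is the case analysis in Case~C: one has to carefully track the ancestor pairs through the first two coalescences to verify that all three sub-cases produce the same value of $L$. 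Once this is noticed, the rest is mechanical.
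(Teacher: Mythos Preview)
Your proof is correct and follows the same coalescent framework as the paper. The paper also computes $\Prob{i\lkto j,\, k\lkto\ell}$ by conditioning on the genealogy of $\{i,j,k,\ell\}$, but it splits the analysis into four topological cases determined by both the first and the second coalescence events, computing the contribution of each separately before summing. Your two-case decomposition is more economical: in your Case~A you bypass the sub-case distinction by invoking Lemma~\ref{propMoran} directly for the surviving pair (its residual coalescence time is $\mathrm{Exp}(1)$ regardless of what the third lineage does), and in your Case~C you note that all three possible second coalescences yield the same total length $L = \tau_2 + \tau_3$. Both routes reach $\Prob{i\lkto j,\, k\lkto\ell} = (9+2r_n)/\bigl((1+r_n)(3+r_n)(3+2r_n)\bigr)$ and hence the stated covariance; your argument simply exploits a symmetry that the paper leaves implicit.
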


\begin{corollary} \label{colVarEdges}
Let $D^{(i)}_n$ and $D^{(j)}_n$ be the respective degrees of two fixed vertices
$i$ and $j$, and let $\Abs{E_n}$ be the number of edges of $G_{n, r_n}$. We have
\[
  \Cov{D^{(i)}_n, D^{(j)}_n} =
  \frac{r_n}{(1 + r_n)^2} \mleft(1 + \frac{3 (n - 2)}{3 + 2\,r_n} +
  \frac{2 (n - 2)(n - 3)}{(3 + r_n)(3 + 2\,r_n)}\mright)
\]
and
\[
  \Var{\Abs{E_n}} = 
  \frac{r_n\, n\,(n - 1)(n^2 + 2\,r_n^2 + 2\,n\,r_n + n + 5\,r_n + 3)}{2 \,
    (1 + r_n)^2 \, (3 + r_n) \, (3 + 2\,r_n)}
\]
\end{corollary}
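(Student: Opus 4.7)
The approach mirrors that of Proposition~\ref{propCovOverlappingEdges}: use the backward representation of Proposition~\ref{propBackwardRepresentation} to express $\Prob{i \lkto j, k \lkto \ell}$ as a Laplace-transform computation over the Kingman genealogy of the four vertices $\{i, j, k, \ell\}$, then assemble the corollary by decomposing $\Cov{D^{(i)}_n, D^{(j)}_n}$ and $\Var{\Abs{E_n}}$ into sums of pairwise covariances.

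First, work conditionally on the Kingman coalescent restricted to $\{i, j, k, \ell\}$. Let $\tau_1 < \tau_2 < \tau_3$ denote the three coalescence times; by Lemma~\ref{propMoran}, the increments $\tau_1$, $\tau_2 - \tau_1$, $\tau_3 - \tau_2$ are independent exponentials with parameters $6$, $3$, $1$. There are $\binom{4}{2} \cdot \binom{3}{2} = 18$ equally likely topology-ordered trajectories, and I classify each by the effective lineage length $L$ for which the conditional joint probability equals $e^{-r_n L}$. Two trajectories have $L = \tau_1 + \tau_2$ (one of $\{ij\}$, $\{k\ell\}$ coalesces at $\tau_1$ and the other at $\tau_2$); four have $L = \tau_1 + \tau_3$ (one coalesces at $\tau_1$ and the other at $\tau_3$ after a cross-coalescence); the remaining twelve have $L = \tau_2 + \tau_3$ and correspond to a cross pair being the first to coalesce.

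The subtle subcase within this last group occurs when, after an initial cross coalescence such as $\{i, k\}$, the second coalescence is $\{j, \ell\}$: the two pair-ancestors then collapse to the same pair of blocks at time $\tau_2$ and \emph{merge} on $[\tau_2, \tau_3]$, so $P^\star_{\{ij\}}$ and $P^\star_{\{k\ell\}}$ coincide there. Accounting for this (two independent Poissons on $[0, \tau_2]$ and a single Poisson on $[\tau_2, \tau_3]$) gives $L = 2\tau_2 + (\tau_3 - \tau_2) = \tau_2 + \tau_3$, placing this case in the third group. On every interval where the pair-ancestors are distinct, Lemma~\ref{lemmaBackwardConstruction} ensures that the two Poisson processes are independent, yielding the claimed product form $e^{-r_n L}$.

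Finally, compute the three Laplace transforms by factoring through the independent exponential increments, weight them by $2/18$, $4/18$, $12/18$ and sum to obtain $\Prob{i \lkto j, k \lkto \ell}$; subtracting $\Prob{i \lkto j}\Prob{k \lkto \ell} = 1/(1 + r_n)^2$ and simplifying yields the announced covariance. For Corollary~\ref{colVarEdges}, expand $D^{(i)}_n = \sum_{u \neq i} \Indic{i \lkto u}$ (similarly for $j$), write $\Cov{D^{(i)}_n, D^{(j)}_n}$ and $\Var{\Abs{E_n}}$ as double sums of edge-indicator covariances, and classify each cross term by whether the two edges share two, one, or zero endpoints, inserting $r_n/(1 + r_n)^2$, the covariance from Proposition~\ref{propCovOverlappingEdges}, or the present proposition; the combinatorial counts $\binom{n}{2}$, $\binom{n}{2}\cdot 2(n-2)$, $\binom{n}{2}\binom{n-2}{2}$ for the edge variance (and $1$, $2(n-2)$, $(n-2)(n-3)$ for the degree covariance) then produce the announced closed forms. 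The main obstacle is identifying the pair-lineage merger subcase and checking that it yields the same effective length $\tau_2 + \tau_3$ as the generic cross-first cases; once this is done, the rest of the argument reduces to routine bookkeeping.
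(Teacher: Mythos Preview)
Your approach is essentially the same as the paper's: condition on the ranked topology of the Kingman genealogy of $\{i,j,k,\ell\}$, compute the conditional probability of $\{i\lkto j,\, k\lkto\ell\}$ as the Laplace transform of an effective length, and then assemble the corollary by bilinearity. Your classification into the three groups with weights $2/18$, $4/18$, $12/18$ and effective lengths $\tau_1+\tau_2$, $\tau_1+\tau_3$, $\tau_2+\tau_3$ is correct, and your treatment of the pair-lineage merger subcase is exactly the point that needs care. The paper organises the case analysis slightly differently (it keeps four aggregated cases in its figure, then collapses two that give the same conditional probability into the single $6/9$ term), and it obtains $\Var{\Abs{E_n}}$ via the identity $\Abs{E_n}=\tfrac12\sum_i D_n^{(i)}$ rather than by a direct double sum over edges, but these are cosmetic differences.

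There is, however, one concrete slip in your bookkeeping for $\Cov{D_n^{(i)},D_n^{(j)}}$: the number of cross terms in which the two edges $\{i,u\}$ and $\{j,v\}$ share exactly one endpoint is $3(n-2)$, not $2(n-2)$. In the double sum over $u\neq i$ and $v\neq j$ there are three one-shared-vertex configurations: $u=j$ (shared vertex $j$), $v=i$ (shared vertex $i$), and $u=v\notin\{i,j\}$ (shared third vertex). You appear to have missed the last of these. With $2(n-2)$ you would not recover the announced closed form; replacing it by $3(n-2)$ does, and indeed the factor $3(n-2)/(3+2r_n)$ is visible in the statement of the corollary.
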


The proof of Proposition~\ref{propCovDisjointEdges} and its corollary are
conceptually identical to the proofs of Propositions~\ref{propProbaEdge} and
\ref{propCovOverlappingEdges} and their corollaries, but the calculations are
more tedious and so they have been relegated to
Section~\ref{appFirstMoments} of the Appendix.

\subsubsection{Complete subgraphs} \label{secCompleteSubgraphs}
From a biological perspective, complete subgraphs are interesting because they
are related to how fine the partition of the set of populations into
species can be. Indeed, the vertices of a complete subgraph -- and especially
of a large one -- should be considered as part of the same species.
A complementary point of view will be brought
by connected components in Section~\ref{secConnectedComponents}.

In this section we establish the following results.

\begin{proposition} \label{propNumberOfCompleteSubgraphs}
Let $X_{n, k}$ be the number of complete subgraphs of order~$k$ in
$G_{n, r_n}$. Then,
\[
  \Expec{X_{n, k}} = \binom{n}{k} \mleft(\frac{1}{1 + r_n}\mright)^{k - 1}.
\]
\end{proposition}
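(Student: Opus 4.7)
The plan is to use linearity of expectation together with the Kingman-coalescent backward construction from Proposition~\ref{propBackwardWithKingman}. By linearity,
\[
  \Expec{X_{n, k}} = \binom{n}{k}\,\Prob{S \text{ induces a complete subgraph of } G_{n, r_n}}
\]
for any fixed $k$-subset $S \subset V$, so the task reduces to showing that this probability equals $(1 + r_n)^{-(k-1)}$.

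By Proposition~\ref{propBackwardWithKingman}, $S$ induces a complete subgraph if and only if the Poisson point process $P^\bullet$ has no atom on the union $\bigcup_{\{ij\} \in S^{(2)}} L_{\{ij\}}$ of pair-lineages. Conditional on the Kingman coalescent restricted to $S$, this union is a subset of $\mathcal{G}$ whose "length" (in the $(time, pair\text{-}ancestor)$ sense) I would compute as follows. Let $N_t$ be the number of distinct ancestors of the $k$ vertices of $S$ at time $t$, and let $T_m$ be the sojourn time of $N_t$ at value $m$. The key observation is that whenever $N_t = m$, the distinct pair-ancestors of pairs in $S^{(2)}$ at time $t$ are exactly the $\binom{m}{2}$ unordered pairs of these $m$ ancestors, so the $r_n$-intensity of $P^\bullet$ on the union of pair-lineages contributes $r_n \binom{m}{2}$ per unit of time. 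Hence, conditional on the coalescent,
\[
  \Prob{P^\bullet \cap \textstyle\bigcup_{\{ij\} \in S^{(2)}} L_{\{ij\}} = \emptyset \,\big|\, \text{coalescent}}
  = \exp\Bigl(-r_n \sum_{m=2}^{k} \binom{m}{2}\, T_m\Bigr).
\]

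Now I would use that by Lemma~\ref{propMoran} applied successively, the $T_m$, $m = 2, \ldots, k$, are independent with $T_m \sim \mathrm{Exp}\bigl(\binom{m}{2}\bigr)$. Taking expectations and using the Laplace transform of an exponential variable gives
\[
  \Expec{e^{-r_n \binom{m}{2} T_m}} = \frac{\binom{m}{2}}{\binom{m}{2} + r_n \binom{m}{2}} = \frac{1}{1 + r_n}\,,
\]
independently of $m$. Multiplying the $k - 1$ factors for $m = 2, \ldots, k$ yields $(1 + r_n)^{-(k-1)}$, which is the desired probability. The only non-routine step is the counting of pair-lineages at each level of the coalescent, i.e.\ recognizing that $m$ distinct vertex-ancestors give $\binom{m}{2}$ distinct pair-ancestors; the rest is a standard coalescent calculation.
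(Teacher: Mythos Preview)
Your proposal is correct and follows essentially the same approach as the paper's proof: both reduce to computing the probability that a fixed $k$-set induces a complete subgraph, decompose the pair-genealogy into the $k-1$ levels of the coalescent on $S$, and observe that at each level the contribution is $\binom{m}{2}/\bigl(\binom{m}{2}+r_n\binom{m}{2}\bigr)=1/(1+r_n)$. The only cosmetic difference is that the paper works with Proposition~\ref{propBackwardRepresentation} and Lemma~\ref{lemmaBackwardConstruction} (the Moran formulation, phrasing each factor as ``min of i.i.d.\ exponentials exceeds the interval length''), whereas you invoke Proposition~\ref{propBackwardWithKingman} and compute the same factor as a Laplace transform.
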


\begin{corollary} \label{colCliqueNumber}
Let $\kappa_n$ be the clique number of $G_{n, r_n}$, i.e.\ the maximal
number of vertices in a complete subgraph of $G_{n, r_n}$.
If $(k_n)$ is such that
\[
  \binom{n}{k_n} \mleft(\frac{1}{1 + r_n}\mright)^{k_n - 1}
  \tendsto{n \to \infty} 0 \, , 
\]
then $k_n$ is asymptotically almost surely an upper bound on $\kappa_n$, 
i.e.\ ${\Prob{\kappa_n \leq k_n} \to 1}$ as $n \to +\infty$.
In particular, when $r_n \to +\infty$,
\begin{mathlist}
\item If $r_n = o(n)$, then
  $\kappa_n \leq \log(r_n) n / r_n$ a.a.s.
\item If $\IsBigOh{r_n}{n/\log(n)}$, $\IsBigOhP{\kappa_n}{n /r_n}$, i.e.\
  \[
    \forall \epsilon > 0, \, \exists M > 0,\, \exists N \st
    \forall n \geq N,\; \Prob{\kappa_n > M n / r_n} < \epsilon \, .
  \]
\end{mathlist}
\end{corollary}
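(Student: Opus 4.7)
The plan is to derive the entire corollary from a single application of Markov's inequality to the variable $X_{n,k_n}$ whose expectation is given by Proposition~\ref{propNumberOfCompleteSubgraphs}, and then to check in each of the two special regimes that the chosen sequence $k_n$ satisfies the hypothesis.

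First I would observe that the clique number and the counts $X_{n,k}$ are linked by the tautology $\{\kappa_n \geq k\} = \{X_{n,k} \geq 1\}$, because a graph contains a complete subgraph of order $k$ if and only if its clique number is at least $k$. Combined with Markov's inequality and Proposition~\ref{propNumberOfCompleteSubgraphs} this gives
\[
  \Prob{\kappa_n \geq k_n}
  \;\leq\; \Expec{X_{n,k_n}}
  \;=\; \binom{n}{k_n}\mleft(\frac{1}{1+r_n}\mright)^{k_n-1},
\]
and the first claim of the corollary follows immediately from the hypothesis that the right-hand side tends to~$0$.

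To prove the two specialized statements I would bound the binomial coefficient by $\binom{n}{k}\leq (en/k)^{k}$ and track how the resulting expression depends on~$k$. For part~(i), taking $k_n=\lceil n\log(r_n)/r_n\rceil$, this yields
\[
  \binom{n}{k_n}\mleft(\frac{1}{1+r_n}\mright)^{k_n-1}
  \;\leq\; (1+r_n)\mleft(\frac{e\,r_n}{k_n(1+r_n)}\mright)^{k_n}
  \;\leq\; (1+r_n)\mleft(\frac{e}{\log r_n}\mright)^{k_n}.
\]
Under the assumption $r_n=\SmallOh{n}$ (with $r_n\to+\infty$), one has $k_n\to+\infty$ and $\log r_n\to+\infty$, so the geometric factor $(e/\log r_n)^{k_n}$ beats the polynomial factor $(1+r_n)$ and the bound tends to~$0$. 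For part~(ii), I would take $k_n=\lceil Mn/r_n\rceil$ for a parameter $M>0$ to be fixed, and obtain in the same way
\[
  \binom{n}{k_n}\mleft(\frac{1}{1+r_n}\mright)^{k_n-1}
  \;\leq\; (1+r_n)\mleft(\frac{e}{M}\mright)^{k_n}.
\]
Using $r_n\leq Cn/\log n$, the exponent satisfies $k_n\geq (M/C)\log n$, so the right-hand side is bounded above by $(1+r_n)\cdot n^{-(M/C)\log(M/e)}$. Choosing $M$ large enough that $(M/C)\log(M/e)>1$ makes this quantity smaller than any prescribed $\epsilon>0$ for all $n$ large, which is precisely the $\IsBigOhP{\kappa_n}{n/r_n}$ conclusion.

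The only mildly delicate point will be the $O_p$ argument in part~(ii): one must be careful that the polynomial prefactor $(1+r_n)$ does not spoil the decay, and it is the hypothesis $\IsBigOh{r_n}{n/\log n}$ that forces $k_n$ to be at least of order $\log n$, giving the geometric term enough room to dominate. Apart from this bookkeeping, everything reduces to the Markov-type inequality above, so no real obstacle is expected.
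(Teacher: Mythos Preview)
Your argument is correct and shares the paper's opening move (Markov's inequality applied to $X_{n,k_n}$, using $\{\kappa_n\geq k\}=\{X_{n,k}\geq 1\}$). The treatment of the two special regimes is where you diverge, and in a way that is arguably cleaner. The paper expands $\binom{n}{k_n}$ via Stirling's formula, writes out the logarithm of $\Expec{X_{n,k_n}}$ as a sum of three terms, and tracks their asymptotics under the substitution $k_n=g_n n/r_n$; you replace all of this with the single inequality $\binom{n}{k}\leq(en/k)^k$, which is coarser but entirely sufficient here and cuts the computation down to a couple of lines. For part~(ii) the contrast is sharper: the paper first proves that $\Expec{X_{n,k_n}}\to 0$ for \emph{every} sequence $k_n=g_n n/r_n$ with $g_n\to\infty$, and then extracts the $O_p$ conclusion through a diagonal/contradiction argument; your direct choice of a single large constant $M$ (large enough that $(M/C)\log(M/e)>1$) bypasses that detour. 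What the paper's route buys is a slightly finer picture---it shows that \emph{any} superlinear multiple of $n/r_n$ works as an upper bound---whereas your route gets to the stated $O_p$ conclusion faster. One small slip: in the display for~(i) the intermediate expression should read $\bigl(en/(k_n(1+r_n))\bigr)^{k_n}$ rather than $\bigl(er_n/(k_n(1+r_n))\bigr)^{k_n}$; the final bound $(1+r_n)(e/\log r_n)^{k_n}$ and the conclusion are unaffected.
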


\begin{proof}[Proof of Proposition~\ref{propNumberOfCompleteSubgraphs}]
The number of complete subgraphs of order $k$ of $G_{n, r_n}$ is
\[
  X_{n, k} = \sum_{S \in V^{(k)}} \Indic{G_{n, r_n}[S] \text{ is complete}}
\]
where the elements of $V^{(k)}$ are the $k$-subsets of $V = \Set{1, \ldots, n}$,
and $G_{n, r_n}[S]$ is the subgraph of $G_{n, r_n}$ induced by $S$.
By exchangeability,
\[
  \Expec{X_{n, k}} = \binom{n}{k} \,
  \Prob*{\big}{G_{n, r_n}[S] \text{ is complete}} \,,
\]
where $S$ is any fixed set of $k$ vertices. Using the notation of 
Proposition~\ref{propBackwardRepresentation},
\[
  G_{n, r_n}[S] \text{ is complete} \iff
  \forall \{ij\} \in S, \, P^\star_{\{ij\}} = \emptyset \, .
\]
For all $t \geq 0$, let $A_t = \Set{a_t(i) \suchthat i \in S}$ be the set of
ancestors of $S$ at~$t$. Let $\tau_0 = 0$ and for each $\ell = 1, \ldots, k - 1$ 
let $\tau_\ell$ be the time of the $\ell$-th coalescence between two lineages of
$S$, i.e.\
\[
  \tau_\ell = \inf \Set{t > \tau_{\ell - 1} \suchthat
  \Abs[\normalsize]{A_t} = \Abs[\normalsize]{A_{\tau_{\ell - 1}}} - 1}
\]
Finally, let $\widetilde{A}_\ell = A_{\tau_\ell}$ and
$I_\ell = [\tau_\ell, \tau_{\ell + 1}[$. With this notation,
\[
  \Set{\forall \{ij\} \in S, P^\star_{\{ij\}} = \emptyset} =
  \bigcap_{\ell = 0}^{k - 2} B_{\ell}\,,
\]
where
\[
  B_{\ell} = \!\!\!\bigcap_{\{ij\} \in \widetilde{A}_{\ell}^{(2)}} \!
  \Set*{P^\star_{\{ij\}} \cap I_\ell = \emptyset}
\]
and $\widetilde{A}_{\ell}^{(2)}$ denotes the (unordered)
pairs of $\widetilde{A}_{\ell}$.
Since for $\ell \neq m$, $I_{\ell} \cap I_{m} = \emptyset$,
Lemma~\ref{lemmaBackwardConstruction} shows that conditionally on
$I_0, \ldots, I_{k - 1}$, the events $B_0, \ldots, B_{k - 2}$ are
independent. By construction, for all
$\{ij\} \neq \{uv\}$ in $\widetilde{A}_{\ell}^{(2)}$, 
\[
  \forall t \in I_\ell,\,
  \{a_t(i), a_t(j)\} \neq \{a_t(u), a_t(v)\} \neq \emptyset
\]
and so it follows from Lemma~\ref{lemmaBackwardConstruction} that,
conditional on $I_\ell$,  $(P^\star_{\{ij\}} \cap I_\ell)$,
${\{ij\} \in \widetilde{A}_{\ell}^{(2)}}$, are i.i.d.\ Poisson point processes
with rate $r_n$ on $I_\ell$. Therefore,
\[
  \Prob{B_\ell} = \Prob{\min \Set{ e^{(\ell)}_{\{ij\}} \suchthat
  \{ij\} \in \widetilde{A}_{\ell}^{(2)}} > \Abs{I_\ell}}\, , 
\]
where $e^{(\ell)}_{\{ij\}}$, ${\{ij\} \in \widetilde{A}_{\ell}^{(2)}}$, are
$\binom{k - \ell}{2}$ i.i.d.\ exponential variables with parameter $r_n$ that
are also independent of $\Abs{I_\ell}$. Since $\Abs{I_\ell} \sim
\mathrm{Exp}\mleft(\binom{k - \ell}{2}\mright)$,
\[
  \Prob{B_\ell} = \frac{1}{1 + r_n}
\]
and Proposition~\ref{propNumberOfCompleteSubgraphs} follows.
\end{proof}

\begin{proof}[Proof of Corollary~\ref{colCliqueNumber}]
The first part of the corollary is a direct 
consequence of Proposition~\ref{propNumberOfCompleteSubgraphs}. First, note that
\[
  X_{n,k_n} = 0 \;\iff\; \kappa_n < k_n
\]
that a complete subgraph of order~$k$ contains complete subgraphs of order
$\ell$ for all $\ell < k$. As a result, any $k_n$ such that
$\Prob{X_{n,k_n} = 0} \to 1$ is asymptotically almost surely an upper bound on
the clique number $\kappa_n$. Now, observe that since
$X_{n,k_n}$ is a non-negative integer,
$X_{n,k_n} \geq \Indic{X_{n,k_n} \neq 0}$ and therefore
\[
  \Expec{X_{n,k_n}} \geq \Prob{X_{n,k_n} \neq 0} \,.
\]
Finally, $X_{n, k}$ being integer-valued, $\Prob{X_{n,k_n} \neq 0} \to 0$
implies $\Prob{X_{n,k_n} = 0} \to~1$.

To prove the second part of the corollary, using Stirling's formula we find that
whenever $r_n$ and $k_n$ are $o(n)$ and go to $+\infty$ as $n \to +\infty$,
\[
  \binom{n}{k_n} \mleft(\frac{1}{1 + r_n}\mright)^{k_n - 1}
  \sim \;
  \frac{C}{\sqrt{k_n}} \frac{n^n}{k_n^{k_n} (n - k_n)^{n - k_n}}
  \mleft(\frac{1}{1 + r_n}\mright)^{k_n - 1} \!\!\!\!, 
\]
where $C = \sqrt{2\pi}$. The right-hand side goes to zero if and only if
its logarithm goes to $-\infty$, i.e.\ if and only if
\[
  A_n \;\defas\; k_n \log\mleft(\frac{n - k_n}{k_n (1 + r_n)}\mright)
  - n \log\mleft({1 - \frac{k_n}{n}}\mright)
  + \log\mleft(\frac{1 + r_n}{\sqrt{k_n}}\mright)
\]
goes to $-\infty$. Now let $k_n = n g_n / r_n$, where $g_n \to +\infty$ and is 
$o(r_n)$, so that $k_n = o(n)$. Then,
\[
  k_n \log\mleft(\frac{n - k_n}{k_n (1 + r_n)}\mright) \sim - k_n \log(g_n)
\]
and
\[
  - n \log\mleft({1 - \frac{k_n}{n}}\mright) \sim k_n \,.
\]
Moreover, as long as it does not go to zero,
\[
  \log\mleft(\frac{1 + r_n}{\sqrt{k_n}}\mright) \sim
  \frac{3}{2} \log(r_n) - \frac{1}{2} \log(n g_n) \, .
\]
Putting the pieces together, we find that $A_n$ is asymptotically equivalent to
\[
  - \frac{n g_n}{r_n} \log(g_n)
  + \frac{3}{2} \log(r_n) - \frac{1}{2} \log(n g_n) \, .
\]
Taking $g_n = \log(r_n)$, this expression goes to $-\infty$ as $n \to +\infty$,
yielding (i). If $\IsBigOh{r_n}{n / \log(n)}$, then it goes to
$-\infty$ for any $g_n \to +\infty$, which proves~(ii). Indeed,
if there exists $\epsilon > 0$ such that
\[
  \forall M > 0,\; \forall N,\; \exists n \geq N \st
  \Prob{\kappa_n > M n / r_n} \geq \epsilon \, , 
\]
then considering successively $M = 1, 2, \ldots$, we can find
$n_1 < n_2 < \cdots$ such that
\[
  \forall k \in \N, \;
  \Prob{\kappa_{n_k} > k n_k / r_{n_k}} \geq \epsilon \,.
\]
Defining $(g_n)$ by
\[
  \forall n \in \Set{n_k, \ldots, n_{k + 1} - 1}, \;
  g_n = k \, , 
\]
we obtain a sequence $(g_n)$ that goes to infinity and yet is such that for all
$N$ there exists $n \defas \min\Set{n_k \suchthat n_k \geq N}$ such that
$\Prob{\kappa_n > g_n n / r_n} \geq \epsilon$.
\end{proof}

A natural pendant to Proposition~\ref{propNumberOfCompleteSubgraphs} and
Corollary~\ref{colCliqueNumber} would be to use the variance of $X_{n, k}$
to find a lower bound on the clique number.
Indeed, it follows from Chebychev's inequality that
\[
  \Prob{X_{n, k} = 0} \leq \frac{\Var{X_{n, k}}}{\Expec{X_{n, k}}^2} \, .
\]
However, computing $\Var{X_{n, k}}$ requires being able to compute the
probability that two subsets of $k$ vertices $S$ and $S'$ both induce a complete
subgraph, which we have not managed to do. Using the probability that
$G_{n, r_n}[S]$ is complete as an upper bound for this quantity, we have the
very crude inequality
\[
  \Var{X_{n, k}} \leq \binom{n}{k}^2 p\, (1 - p)\,,
\]
where $p = 1/(1 + r_n)^{k - 1}$. This shows that when $r_n \to 0$ and
$k_n = o(1/r_n)$, $\Prob{X_{n, k_n} = 0}$ tends to zero, proving that
$\kappa_n$ is at least $\Theta(1/r_n)$.

Finally, because we expect our model to form dense connected components,
whose number we conjecture to be on the order of $r_n$ in the intermediate
regime (see Theorem~\ref{thmConnectedComponents} and
Conjecture~\ref{conjConnectedComponents}), and since the degree of a typical
vertex is approximately $n/r_n$ in that regime, it seems reasonable to
conjecture
\begin{conjecture} \label{conjCompleteSubgraphs} 
In the intermediate regime, i.e.\ when $r_n \to +\infty$ and $r_n = o(n)$,
\[
 \exists \alpha, \beta > 0 \st 
 \Prob{\alpha n/r_n \leq \kappa_n \leq \beta n/r_n} \tendsto{n\to+\infty} 1.
\]
\end{conjecture}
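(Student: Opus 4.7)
The conjecture splits into an upper and a lower bound, which raise different obstacles. For the upper bound, Corollary~\ref{colCliqueNumber}(ii) already gives $\IsBigOhP{\kappa_n}{n/r_n}$ whenever $\IsBigOh{r_n}{n/\log n}$; the remaining sub-regime $\IsSmallOmega{r_n}{n/\log n}$, $\IsSmallOh{r_n}{n}$ needs a genuinely new idea. The lower bound $\kappa_n \geq \alpha n/r_n$ a.a.s.\ calls for a second-moment argument on $X_{n,k_n}$ with $k_n = \lfloor \alpha n/r_n \rfloor$, and, as indicated by the authors' remark preceding the conjecture, the core obstacle is to estimate the covariance between the indicators that two overlapping $k$-subsets both induce complete subgraphs.

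\textbf{Lower bound.} Fix a small $\alpha > 0$ and set $k_n = \lfloor \alpha n/r_n \rfloor$. Using Stirling, Proposition~\ref{propNumberOfCompleteSubgraphs} yields $\Expec{X_{n,k_n}} \sim (r_n/\sqrt{2\pi k_n})\,(e/\alpha)^{k_n}$, which tends to infinity for $\alpha < e$. By Paley--Zygmund,
\[
  \Prob{\kappa_n \geq k_n} \geq \Prob{X_{n,k_n} > 0}
  \geq \frac{\Expec{X_{n,k_n}}^2}{\Expec{X_{n,k_n}^2}}.
\]
Expanding $\Expec{X_{n,k_n}^2} = \sum_{S,S'} p(S,S')$ with $p(S,S') = \Prob{G[S]\text{ and }G[S']\text{ are both complete}}$ and grouping by $m = |S \cap S'|$, the key quantity to estimate is $p(S,S')$. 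Applying the coalescent representation of Proposition~\ref{propBackwardWithKingman} to the $2k_n - m$ vertices of $S \cup S'$ gives
\[
  p(S, S') = \Expec{\exp\bigl(-r_n \int_0^\infty |A_S(t) \cup A_{S'}(t)|\, dt\bigr)},
\]
where $A_S(t)$ (resp.\ $A_{S'}(t)$) denotes the set of not-yet-coalesced pairs of ancestors at time $t$ of pairs in $S$ (resp.\ $S'$), and the expectation is taken with respect to the underlying Kingman coalescent on $S \cup S'$.

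\textbf{Main obstacle for the lower bound.} The crux is to prove that this quantity is close to $p^2$ (with $p = \Expec{X_{n,k_n}}/\binom{n}{k_n}$) for most pairs, so as to force $\Expec{X_{n,k_n}^2} = O(\Expec{X_{n,k_n}}^2)$. Heuristically, for small $m$ the lineages of $S$ and $S'$ remain disjoint in the coalescent until their ancestries merge (which happens with $\Theta(1)$ lineages of each side still active), so the correction coming from $|A_S \cap A_{S'}|$ is bounded; for large $m$ the count $\binom{n}{k_n}\binom{k_n}{m}\binom{n - k_n}{k_n - m}$ of pairs with overlap $m$ decays fast enough to compensate for the increase of $p(S,S')$. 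Making this trade-off quantitative---for example, by coupling the coalescent of $S \cup S'$ with two independent Kingman coalescents on $S$ and $S'$ run until their ancestors first meet, then integrating out the residual joint section---is the technical heart, and is precisely the step that appears to have blocked the authors from computing $\Var{X_{n,k}}$.

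\textbf{Upper bound in the remaining sub-regime.} When $\IsSmallOmega{r_n}{n/\log n}$, Stirling gives $\Expec{X_{n,k}} \sim (r_n/\sqrt{2\pi k})(e/\beta)^{\beta n/r_n}$ for $k = \lceil \beta n/r_n \rceil$, which does not vanish for any fixed $\beta$; so Markov's inequality on $X_{n,k}$ alone cannot deliver a constant multiple of $n/r_n$ as an upper bound. A plausible route combines the forward construction of Proposition~\ref{propForwardConstruction}, in which $\kappa_n$ can grow by at most $1$ per vertex duplication and shrinks whenever an edge internal to a maximum clique is removed, with a comparison to the connected-component sizes from Theorem~\ref{thmConnectedComponents} and Conjecture~\ref{conjConnectedComponents}: each connected component conjecturally has size $\Theta(n/r_n)$, and any clique is contained in a single component. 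I expect this sub-regime to be the genuinely hard part, and it is presumably what justifies leaving the statement at the conjectural level.
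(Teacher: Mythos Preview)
The paper does not prove this statement: it is explicitly labelled a \emph{conjecture}, and the surrounding text offers only the heuristic that the connected components are expected to number $\Theta(r_n)$ and to have typical size $\Theta(n/r_n)$, so the largest clique should be of the same order. There is no argument to compare your proposal against.

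Your write-up is, correspondingly, not a proof but a roadmap, and you are candid about this. A few comments on the content of that roadmap. For the lower bound, you correctly set up the second-moment method and the coalescent representation of $p(S,S')$; the formula $p(S,S')=\Expec{\exp\big(-r_n\int_0^\infty |A_S(t)\cup A_{S'}(t)|\,dt\big)}$ is right, with the understanding that $A_S(t)$ is the set of \emph{pairs} of distinct ancestors of $S$ at time $t$. But the heuristic you sketch for why the sum $\sum_m$ is controlled is not yet an argument: the ``small $m$'' and ``large $m$'' regimes are not delineated, no quantitative bound on $p(S,S')/p^2$ is produced, and the coupling you allude to between the coalescent on $S\cup S'$ and two independent coalescents is not constructed. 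This is exactly the variance computation the paper flags as missing, and you have restated the difficulty rather than resolved it.

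For the upper bound, your observation that first-moment bounds on $X_{n,k}$ cannot yield $\kappa_n\leq \beta n/r_n$ when $r_n=\omega(n/\log n)$ is correct and useful. The alternative routes you mention (tracking $\kappa_n$ through the forward construction, or bounding cliques by component sizes) are reasonable intuitions, but note that the second one would require Conjecture~\ref{conjConnectedComponents}, which is itself open in the paper; invoking one conjecture to settle another is not progress. In short: your diagnosis of where the problem lies matches the paper's, but neither you nor the paper supplies the missing estimates.
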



\subsection{Identification of different regimes} \label{secRegimes}

We now use the results of the previous section to identify different regimes
for the behavior of $G_{n,r_n}$. The proof of our next theorem relies in
part on results proved later in the paper (namely,
Theorems~\ref{thmDegreeDistribution} and~\ref{thmPoissonianEdges}), but no
subsequent result depends on it, avoiding cyclic dependencies.
While this section could have been placed at the end of the paper,
it makes more sense to present it here because it relies mostly on
Section~\ref{secFirstMoments} and because it helps structure the rest of the
paper.

\begin{theorem} \label{thmRegimes}
Let $D_n$ be the degree of a fixed vertex of $G_{n, r_n}$. In the limit as
$n \to +\infty$, depending on the asymptotics of $r_n$ we have the following
behaviors for~$G_{n, r_n}$
\begin{mathlist}
\item \emph{Transition for the complete graph:} when $\IsSmallOh{r_n}{1/n}$, 
  $\Prob{G_{n, r_n} \text{ is complete}}$ goes to~$1$,
  while when $\IsSmallOmega{r_n}{1/n}$ it goes to~$0$;
  when $\IsBigTheta{r_n}{1/n}$, this probability is bounded away from~$0$
  and from~$1$.
\item \emph{Dense regime:} when $\IsSmallOh{r_n}{1}$, $\Prob{D_n = n - 1} \to 1$.
\item \emph{Sparse regime:} when $\IsSmallOmega{r_n}{n}$,
  $\Prob{D_n = 0} \to 1$.
\item \emph{Transition for the empty graph:} when $\IsSmallOh{r_n}{n^2}$, 
  $\Prob{G_{n, r_n} \text{ is empty}}$ goes to~$0$
  while when $\IsSmallOmega{r_n}{n^2}$ it goes to~$1$;
  when $\IsBigTheta{r_n}{n^2}$, this probability is bounded away from~$0$ and
  from~$1$.
\end{mathlist}
\end{theorem}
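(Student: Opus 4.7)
The statement splits into four separate asymptotic claims about different invariants of $G_{n,r_n}$. I would prove them one at a time, using a different tool from earlier in the paper for each.

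For (i), the event $\{G_{n,r_n} \text{ is complete}\}$ coincides with $\{X_{n,n} = 1\}$, and because $X_{n,n}$ only takes values $0$ or $1$, Proposition~\ref{propNumberOfCompleteSubgraphs} applied with $k = n$ yields the exact formula
\[
\Prob{G_{n,r_n} \text{ is complete}} = \left(\frac{1}{1+r_n}\right)^{n-1}.
\]
The three sub-cases then follow by examining the exponent $(n-1)\log(1+r_n)$, which tends to $0$, to a positive constant, or to $+\infty$ precisely when $r_n$ is $o(1/n)$, $\Theta(1/n)$, or $\omega(1/n)$, respectively.

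For (ii), I would specialize Theorem~\ref{thmDegreeDistribution} to $k = n-1$. Telescoping the product and applying a Gamma-function identity reduces the probability to $\Prob{D_n = n-1} = \Gamma(n+1)\Gamma(2r_n+2)/\Gamma(n+2r_n+1)$, whose asymptotics are then controlled by Stirling's formula. For (iii), a direct Markov bound on the degree suffices: $\Prob{D_n \geq 1} \leq \Expec{D_n} = (n-1)/(1+r_n) \sim n/r_n \to 0$ whenever $r_n = \omega(n)$.

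For (iv), the two directions of the dichotomy require different tools. When $r_n = \omega(n^2)$, Markov applied to $|E_n|$ is enough: $\Prob{|E_n| \geq 1} \leq \Expec{|E_n|} = \binom{n}{2}/(1+r_n) \to 0$. When $r_n = o(n^2)$, I would split into two sub-cases. For $r_n = \omega(n)$, Theorem~\ref{thmPoissonianEdges} approximates $|E_n|$ by a Poisson variable of mean $\lambda_n \sim n^2/(2r_n) \to +\infty$, so $\Prob{|E_n|=0} \approx e^{-\lambda_n} \to 0$; the $\Theta(n^2)$ boundary is handled by the same argument with $\lambda_n$ tending to a finite positive limit $\lambda$, giving $\Prob{|E_n|=0} \to e^{-\lambda} \in (0,1)$. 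For $r_n = O(n)$, the second-moment method using $\Var{|E_n|}$ from Corollary~\ref{colVarEdges} gives $\Var{|E_n|}/\Expec{|E_n|}^2 \to 0$, and hence $\Prob{|E_n|=0}\to 0$ by Paley--Zygmund.

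The main obstacle is part (ii): the crude first-moment bound $\Prob{D_n \neq n-1} \leq \Expec{(n-1)-D_n} = (n-1)r_n/(1+r_n)$ derived from Corollary~\ref{colExpectedDegree} only yields the weaker threshold $r_n = o(1/n)$, so one really has to use the full exact formula of Theorem~\ref{thmDegreeDistribution} and carefully track the joint asymptotics as both $n$ and $r_n$ vary.
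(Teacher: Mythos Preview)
Your treatment of (i), (ii), (iii), and the $r_n = \omega(n^2)$ and $r_n = \Theta(n^2)$ parts of (iv) matches the paper's proof essentially line for line: the same exact formula from Proposition~\ref{propNumberOfCompleteSubgraphs} with $k=n$, the same Gamma-function rewriting of Theorem~\ref{thmDegreeDistribution} at $k=n-1$, the same Markov bound on $D_n$, the same Markov bound on $|E_n|$, and the same appeal to Theorem~\ref{thmPoissonianEdges} at the $\Theta(n^2)$ boundary.

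The one place you depart from the paper is the sub-case $r_n = o(n^2)$ of (iv), and here your plan has a gap. The paper does not split this range at all; instead it observes that the most recent vertex duplication created an edge between the duplicating vertex and its copy, and that this particular edge has been removed with probability exactly $r_n\big/\bigl(\tbinom{n}{2}+r_n\bigr)$, which tends to $0$ throughout $r_n = o(n^2)$. This single sentence replaces your two-case analysis. Your Poisson-approximation argument for $r_n = \omega(n)$ is fine, but your second-moment claim for $r_n = O(n)$ is not: plugging the exact formulas from Table~\ref{tabFirstMoments} gives
\[
\frac{\Var{|E_n|}}{\Expec{|E_n|}^2}
= \frac{2\,r_n\bigl(n^2 + 2r_n^2 + 2nr_n + n + 5r_n + 3\bigr)}{n(n-1)(3+r_n)(3+2r_n)},
\]
and when $r_n \to r>0$ this converges to $2r/\bigl((3+r)(3+2r)\bigr)>0$, not to $0$. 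So Paley--Zygmund only bounds $\Prob{|E_n|=0}$ away from $1$, not down to $0$, for $r_n$ of constant order. The gap is easy to patch (e.g.\ $\{G\text{ empty}\}\subset\{D_n=0\}$ together with Theorem~\ref{thmDegreeDistribution} or~\ref{thmDegreeConvergence}(i) handles bounded $r_n$), but the paper's ``last duplication edge'' argument is both shorter and avoids the case split entirely.
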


\begin{proof}
{(i)} is a direct consequence of
Proposition~\ref{propNumberOfCompleteSubgraphs} which, applied to $k = n$,
yields
\[
  \Prob{G_{n, r_n} \text{ is complete}} =
  \mleft(\frac{1}{1 + r_n}\mright)^{n - 1} \!.
\]

{(ii)} is intuitive since $\Expec{D_n} = (n - 1)/(1 + r_n)$; but
because it takes $\IsSmallOh{r_n}{1/n^2}$ for $\Var{D_n}$ to go to zero, a second
moment method is not sufficient to prove it.
However, using Theorem~\ref{thmDegreeDistribution}, we see that
$\Prob{D_n = n - 1}$ can be written as
\[
  \Prob{D_n = n - 1} =
  \frac{\Gamma(2 + 2\, r_n) \Gamma(n + 1)}{\Gamma(n + 1 + 2\, r_n)} \, , 
\]
where $\Gamma$ is the gamma function. The results follows by letting
$r_n$ go to zero and using the continuity of $\Gamma$.

{(iii)} follows from the same argument as in the proof of
Corollary~\ref{colCliqueNumber}, by which, $D_n$ being a non-negative integer,
$\Prob{D_n \neq 0} \leq \Expec{D_n} = \frac{n - 1}{1 + r_n}$.

In {(iv)}, the fact that $G_{n, r_n}$ is empty when $r_n = \omega(n^2)$ is
yet another application of this argument, but this time using the
expected number of edges, $\Expec{|E_n|} = \frac{n (n - 1)}{2 (1 + r_n)}$, in
conjunction with the fact that $G_{n, r_n}$ is empty if and only if $|E_n| = 0$;
to see why the graph cannot be empty when $r_n = o(n^2)$, consider the edge
that was created between the duplicated vertex and its copy in the most recent
duplication. Clearly, if this edge has not disappeared yet then $G_{n, r_n}$
cannot be empty. But the probability that this edge has disappeared is just
\[
  \frac{r_n}{\binom{n}{2} + r_n}\, , 
\]
which goes to zero when $r_n = o(n^2)$. Finally,
the fact that  $\Prob{G_{n, r_n} \text{ is empty}}$ is bounded away from
$0$ and from $1$ when $r_n = \Theta(n^2)$ is a consequence of
Theorem~\ref{thmPoissonianEdges}, which shows 
that the number of edges is Poissonian
when $r_n = \omega(n)$. As a result, $\Prob{|E_n| = 0} \sim e^{-\Expec{|E_n|}}$.
\end{proof}

\begin{remark} Note that
when $\IsSmallOh{r_n}{1}$, $\Var{D_n} \sim r_n n^2 / 3$ can go to infinity even
though $D_n = n - 1$ with probability that goes to~1. Similarly, when
$\IsSmallOh{r_n}{1/n}$, $\Var{|E_n|} \sim r_n n^4 / 18$ and $|E_n| =
\binom{n}{2}$ a.a.s. Notably, $\,\overline{\!D_n\!}\, = (n - 1) - D_n$
converges to $0$ in probability while $\Var{\,\overline{\!D_n\!}\,}$ goes to
infinity.
\end{remark}


\section{The degree distribution} \label{secDegreeDistribution}

The degree distribution is one of the most widely studied graph invariants in
network science. Our model makes it possible to obtain an exact expression for
its probability distribution:

\begin{theorem}[degree distribution] \label{thmDegreeDistribution}
Let $D_n$ be the degree of a fixed vertex of $G_{n, r_n}$. Then,  
for each $k \in \Set{0, \ldots, n - 1}$,
\[
  \Prob{D_n = k} = \frac{2\,r_n\,(2\,r_n + 1)}{(n + 2\,r_n) (n - 1 + 2\,r_n)} \,
  (k + 1) \, \prod_{i = 1}^k \frac{n - i}{n - i + 2\,r_n - 1}\, , 
\]
where the empty product is 1.
\end{theorem}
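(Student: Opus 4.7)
The plan is to compute the factorial moments of $D_n$ via the backward coalescent construction of Proposition~\ref{propBackwardRepresentation}, and to identify the distribution from them. Fix a vertex $v$ and write $D_n = \sum_{i \neq v} \Indic{i \lkto v}$; by exchangeability, for every $\ell \geq 1$,
\[
  \Expec{(D_n)_\ell} = (n-1)_\ell \, q_\ell,
  \qquad q_\ell \defas \Prob{i_1 \lkto v, \ldots, i_\ell \lkto v},
\]
for any fixed distinct $i_1, \ldots, i_\ell \in V \setminus \{v\}$, so the task reduces to computing $q_\ell$.

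I would restrict attention to the $\ell+1$ lineages $v, i_1, \ldots, i_\ell$, which by consistency of Kingman's coalescent evolve as a Kingman coalescent on $\ell+1$ leaves. Conditionally on this reduced coalescent, Lemma~\ref{lemmaBackwardConstruction} yields
\[
  q_\ell = \Expec{\exp(-r_n \, L^{(\ell)}_v)},
\]
where $L^{(\ell)}_v$ is the total length of the union $\bigcup_{j=1}^\ell L_{\{i_j v\}}$ of pair lineages in the genealogy of pairs of Proposition~\ref{propBackwardWithKingman}. The main (modest) obstacle is the following geometric observation: rooting the reduced tree at $v$, the $\ell$ other leaves partition into subtrees $S_1, \ldots, S_J$ hanging off $v$'s ancestral lineage; because $L_{\{i_j v\}}$ and $L_{\{i_k v\}}$ share a segment only when $a_t(i_j) = a_t(i_k)$, i.e.\ only after $i_j$ and $i_k$ coalesce and hence only when they lie in the same subtree, one checks that $L^{(\ell)}_v$ equals the sum of branch lengths of $S_1, \ldots, S_J$, or equivalently the total branch length of the reduced tree minus the length of $v$'s ancestral lineage. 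Writing $\tau_j \sim \mathrm{Exp}(\binom{j}{2})$, $j = 2, \ldots, \ell+1$, for the independent inter-coalescence times while the reduced coalescent has $j$ lineages, this gives $L^{(\ell)}_v = \sum_{j=2}^{\ell+1}(j-1)\,\tau_j$.

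A one-line Laplace-transform computation then yields
\[
  q_\ell
  = \prod_{j=2}^{\ell+1}\frac{\binom{j}{2}}{\binom{j}{2} + (j-1)\,r_n}
  = \prod_{j=2}^{\ell+1}\frac{j}{j + 2 r_n},
\]
so $\Expec{(D_n)_\ell} = (n-1)_\ell \prod_{j=2}^{\ell+1} j/(j + 2r_n)$. These are precisely the factorial moments of a $\mathrm{BetaBinomial}(n-1,\, 2,\, 2r_n)$ distribution, since for $P \sim \mathrm{Beta}(2, 2r_n)$ one has $\Expec{P^\ell} = \prod_{j=2}^{\ell+1} j/(j + 2r_n)$. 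Because $D_n$ has bounded support $\{0, \ldots, n-1\}$, its factorial moments determine its law, so $D_n \sim \mathrm{BetaBinomial}(n-1,\, 2,\, 2r_n)$; expanding the standard PMF $\binom{n-1}{k} B(k+2,\, n-1-k+2r_n)/B(2, 2r_n)$ via $\Gamma(k+2) = (k+1)!$ and collecting Pochhammer ratios reproduces the expression displayed in the theorem. A reader who prefers to avoid the Beta-binomial identification can instead apply the factorial-moment inversion $\Prob{D_n = k} = \sum_{\ell \geq k} \frac{(-1)^{\ell-k}}{k!(\ell-k)!}\Expec{(D_n)_\ell}$ directly and arrive at the same formula.
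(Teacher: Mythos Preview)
Your argument is correct and takes a genuinely different route from the paper's proof. The paper does not compute factorial moments at all; instead it introduces a backward-time \emph{vertex-marking process} that uncovers the closed neighborhood of~$v$, and then couples it with a forward-time branching process with immigration carrying a distinguished spine that branches at rate~$2$ (a Pólya-urn / spinal decomposition picture). The degree is read off from a two-coordinate Markov chain $(p,q)\mapsto(p+1,q)$ with probability $(p+1)/(p+1+q+2r_n)$, which yields the product formula after a short calculation.

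Your approach is considerably shorter and more transparent: the key geometric observation that the union $\bigcup_j L_{\{i_j v\}}$ has, at any time with $j$ blocks in the reduced $(\ell+1)$-coalescent, exactly $j-1$ distinct pair lineages, is exactly right (and indeed matches the paper's Propositions~\ref{propProbaEdge} and~\ref{propCovOverlappingEdges} for $\ell=1,2$). The resulting identity $q_\ell=\prod_{j=2}^{\ell+1} j/(j+2r_n)$ is clean, and recognizing it as the $\ell$-th moment of $\mathrm{Beta}(2,2r_n)$ immediately gives the Beta-binomial law of $D_n$, which in turn makes the convergence $D_n/n \Rightarrow \mathrm{Beta}(2,2r)$ of Theorem~\ref{thmDegreeConvergence}(i) essentially automatic. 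What the paper's construction buys, by contrast, is a richer probabilistic picture of the neighborhood of~$v$ --- the explicit forest coupling and the Pólya-type urn --- which may be reusable for finer questions about the local structure of the graph, but for the bare statement of Theorem~\ref{thmDegreeDistribution} your method is the more economical one.
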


The expression above holds for any positive sequence $(r_n)$ and any $n$; but
as $n \to +\infty$ it becomes much simpler and, under appropriate rescaling,
the degree converges to classical distributions:

\begin{theorem}[convergence of the rescaled degree] \label{thmDegreeConvergence} ~
\begin{mathlist}
  \item If $r_n \to r > 0$, then $\frac{D_n}{n}$ converges in distribution to a 
    $\mathrm{Beta}(2, 2\,r)$ random variable.
  \item If $r_n$ is both $\SmallOmega{1}$ and $\SmallOh{n}$, then
    $\frac{D_n}{n / r_n}$ converges in distribution to a size-biased
    exponential variable with parameter~$2$.
  \item If $2\,r_n / n \to \rho > 0$, then $D_n + 1$ converges in distribution
    to a size-biased geometric variable with parameter $\rho/(1 + \rho)$.
\end{mathlist}
\end{theorem}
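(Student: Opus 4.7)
The plan is to derive all three limit laws directly from the exact pmf in Theorem~\ref{thmDegreeDistribution}. The key ingredient in every case is the asymptotics of the product
\[
  \Pi_n(k) \defas \prod_{i=1}^k \frac{n-i}{n-i+2r_n-1},
\]
which I will obtain by taking logarithms, expanding $\log(1+u) = u + O(u^2)$ inside the sum, and comparing with $-(2r_n-1)\log\bigl(n/(n-k)\bigr)$. The Taylor remainder is of order $\sum_i (2r_n)^2/(n-i)^2 = O(r_n^2 k/n^2)$, which I will check is negligible in each of the three regimes.

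For (i), I fix $x \in (0,1)$ and set $k = \lfloor xn \rfloor$. With $r_n \to r$, the prefactor $\frac{2r_n(2r_n+1)}{(n+2r_n)(n-1+2r_n)}$ is $\sim \frac{2r(2r+1)}{n^2}$, the linear factor is $(k+1) \sim xn$, and $\Pi_n(k) \to (1-x)^{2r-1}$. Multiplying,
\[
  n\, \Prob{D_n = k} \;\longrightarrow\; 2r(2r+1)\, x\, (1-x)^{2r-1},
\]
which is the density of $\mathrm{Beta}(2,2r)$. Viewing the law of $D_n/n$ as the piecewise-constant density of height $n\,\Prob{D_n = k}$ on each interval $[k/n,(k+1)/n)$, pointwise a.e.\ convergence combined with Scheffé's lemma yields convergence in total variation, hence in distribution.

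For (ii), in the regime $r_n = \omega(1)$, $r_n = o(n)$, I set $k = \lfloor yn/r_n \rfloor$. Because $k/n = y/r_n \to 0$, the leading term in $\log \Pi_n(k)$ is $-(2r_n-1)\cdot y/r_n \to -2y$, so $\Pi_n(k) \to e^{-2y}$; the prefactor is $\sim 4r_n^2/n^2$ and $(k+1) \sim yn/r_n$, giving
\[
  (n/r_n)\, \Prob{D_n = k} \;\longrightarrow\; 4\, y\, e^{-2y},
\]
the density of a size-biased $\mathrm{Exp}(2)$ random variable; Scheffé's lemma again concludes. For (iii), with $2r_n/n \to \rho > 0$ and $k$ fixed, each factor $(n-i)/(n-i+2r_n-1) \to 1/(1+\rho)$, the prefactor converges to $(\rho/(1+\rho))^2$, and $(k+1)$ is untouched; this gives pointwise pmf convergence of $D_n+1$ to the size-biased geometric with parameter $\rho/(1+\rho)$, which suffices since the limit is supported on the positive integers.

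The only point requiring a little care -- and the main obstacle, such as it is -- is justifying Scheffé in cases (i) and (ii): I must verify pointwise convergence for a.e.\ value of the rescaled variable (including near the right endpoint $x = 1$ in (i) and as $y \to \infty$ in (ii)) and confirm that the limit densities integrate to one, which they do by construction. The Taylor-expansion remainder is $O(1/n)$ in (i) and $O(r_n y/n)$ in (ii), both vanishing uniformly on compact sets of the rescaled variable.
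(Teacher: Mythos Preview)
Your proposal is correct and follows the same overall strategy as the paper: start from the exact pmf of Theorem~\ref{thmDegreeDistribution}, identify the pointwise limit of the appropriately rescaled density, and upgrade this to convergence in distribution. Part~(iii) is handled identically.

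The execution differs in two places, and in both your choice is a little more economical. First, for the asymptotics of $\Pi_n(k)$ the paper rewrites the product as a ratio of Gamma functions and then invokes $\Gamma(n+\alpha)/\Gamma(n)\sim n^{\alpha}$ in case~(i) and the full Stirling expansion in case~(ii); your direct logarithmic expansion is more elementary and handles both cases uniformly. Second, to pass from pointwise density convergence to convergence in law, the paper constructs explicit dominating functions (roughly $cx\,\Indic*{[0,1)}(x)$ in~(i) and $c\,x\,e^{-2cx}$ in~(ii)) and applies dominated convergence to the cumulative distribution function; your appeal to Scheff\'e's lemma sidesteps this entirely, since the piecewise-constant densities and their limits all integrate to~$1$. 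The paper's explicit bounds are a byproduct that could be reused, but for the theorem as stated Scheff\'e is the cleaner route.

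One small point of care: your estimate $\sum_{i=1}^{k}(n-i)^{-2}=O(k/n^{2})$ holds with an implicit constant depending on $x$ in case~(i) (the correct uniform bound there is $O\big(1/((1-x)n)\big)$), but since Scheff\'e requires only pointwise a.e.\ convergence this is harmless.
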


In this section we prove Theorem~\ref{thmDegreeDistribution} by coupling the
degree to the number of individuals descended from a founder in a branching
process with immigration. Theorem~\ref{thmDegreeConvergence} is then easily
deduced by a standard study that has been relegated to
Section~\ref{appDegreeConvergence} of the Appendix.

\subsection{Ideas of the proof of Theorem~\ref*{thmDegreeDistribution}} \label{secIdeasProofDegree}
Before jumping to the formal proof of Theorem~\ref{thmDegreeDistribution}, we
give a verbal account of the main ideas of the proof.

In order to find the degree of a fixed vertex $v$, we have to consider all
pairs $\{iv\}$ and look at their ancestry to assess the absence/presence of
atoms in the corresponding Poisson point processes. To do so, we can
restrict our attention to the genealogy of the vertices, and consider that
edge-removal events occur along the lineages of this genealogy: a point that
falls on the lineage of vertex~$i$ at time~$t$ means that
$t \in P^\star_{\{iv\}}$. In this setting, edge-removal events occur at
constant rate $r_n$ on every lineage different from that of $v$.

Next, the closed neighborhood of $v$ (i.e.\ the set of vertices that are linked
to $v$, plus $v$ itself) can be obtained through the following procedure:
we trace the genealogy of vertices, backwards in time; if we encounter
an edge-removal event on lineage~$i$ at time~$t$, then we mark all vertices that 
descend from this lineage, i.e.\ all vertices whose ancestor at time~$t$ is
$i$; only the lineages of unmarked vertices are considered after~$t$.
We stop when there is only one lineage left in the genealogy.
The unmarked vertices are then exactly the neighbors of $v$ (plus $v$ itself).
The procedure is illustrated in Figure~\ref{figProofDegree}.

\enlargethispage{2ex}
\begin{figure}[h!]
  \centering 
  \includegraphics[width=\linewidth]{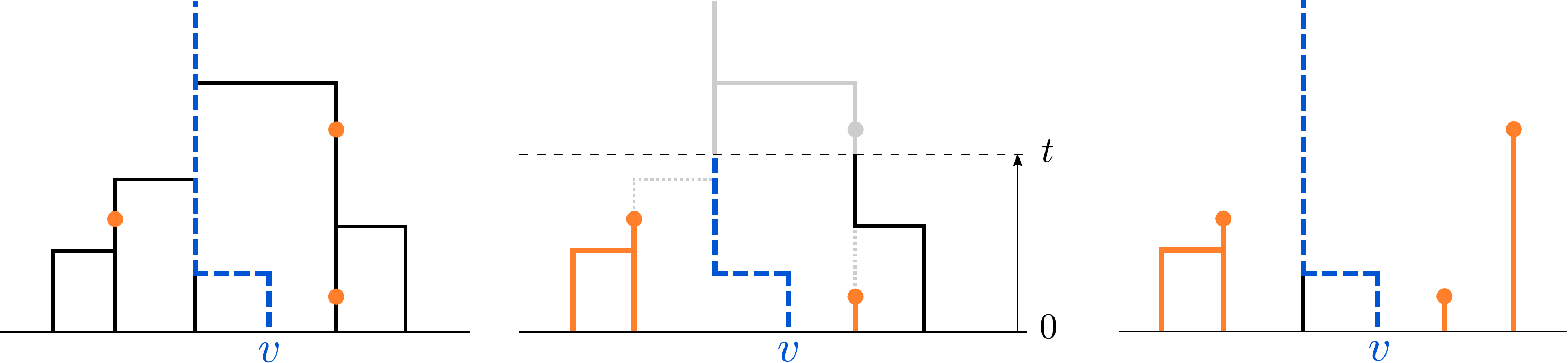}
  \caption{Illustration of the procedure used to find the neighborhood of $v$.
  On the left, the genealogy of the vertices. The dashed blue line represents
  the lineage of the focal vertex $v$, and a dot on lineage~$k$ corresponds to
  a point in $P_{\{k a_t(v)\}}$. In the middle, we uncover the genealogy and
  edge-removal events in backward time, as described in the main text. On the
  right, the forest that we get when the procedure is complete. The non-colored
  (black) branches are exactly the neighbors of $v$.}
  \label{figProofDegree}
\end{figure}

This vertex marking process is not convenient to describe in backward time
because we typically mark several vertices simultaneously.  By contrast, the
forest that results from the completed process seems much easier to describe in
forward time. Indeed, the arrival of a new lineage corresponds either to the
addition of a new unmarked vertex or to the addition of a marked one, depending
on whether the new lineage belongs to the same tree as $v$ or not.

Moreover, in forward time, the process is reminiscent of a branching process
with immigration: new lineages are either grafted to existing ones (branching)
or sprout spontaneously (immigration). Let us try to find what the branching
and immigration rates should be. In backward time, when there are $k+1$
lineages then a coalescence occurs at rate $\binom{k + 1}{2}$, while an
edge-removal event occurs at rate $k\,r_n$. Reversing time, these events occur
at the same rates. As a result, when going from $k$ to $k+1$ lineages, the
probability that the next event is a branching is $(k + 1) / (k + 1 + 2\,r_n)$.

\enlargethispage{1ex}

Next, we have to find the probability that each lineage has to branch,
given that the next event is a branching. Here, a spinal
decomposition~\cite{ChauvinRouaultSPA1991, LyonsPemantlePerezAnnProbab1995}
suggests that every lineage branches at rate~$1$, except for the lineage
of $v$, which branches at rate~$2$. To see why, observe that this is
coherent with the fact that, in backward time, when going from $k + 1$ to $k$
lineages there are $k$ pairs out of $\binom{k + 1}{2}$ that involve the lineage
of $v$, so that the probability that the lineage of $v$ is involved in the next
coalescence is $2/(k + 1)$.

If this heuristic is correct, then in forward time it is easy to track the
number of branches of the tree of $v$ versus the number of branches of other
trees: when there are $p$ branches in the tree of $v$ and $q$ branches in the
other trees, the probability that the next branch is added to the tree of
$v$ is just $(p + 1) / (p + 1 + q + 2\,r_n)$. Moreover, when the total number
of branches reaches $n$, the number of branches in the tree of $v$ is also the
number of unmarked vertices at the end of the vertex marking procedure, which
is itself $D_n^{(v)} + 1$, the degree of $v$ plus one.

\subsection{Formal proof of Theorem~\ref*{thmDegreeDistribution}} \label{secFormalProofDegree}

The ideas and outline of the proof parallels the account given in the previous
section: first, given a realization of the vertex-duplication process
$\mathpzc{M}$ and of the edge-removal process $\mathpzc{P}$, we describe a
deterministic procedure that gives the closed neighborhood of any vertex $v$, 
\[
  N_G[v] = \Set*[\big]{i \in V \suchthat \{iv\} \in E} \cup \Set*[\big]{v}\,, 
\]
where $G = (V, E)$ is the graph associated to $\mathpzc{M}$ and $\mathpzc{P}$;
then, we identify the law of the process $(F_t)_{t \geq 0}$ corresponding
to this procedure, and recognize it as the law of a branching process with
immigration. 

\begin{definition} \label{defRootedForestMarkedVertices}
A \emph{rooted forest with marked vertices} is a triple
$F = (\unmarked{V}, \marked{V}, \vec{E})$ such that 
\begin{mathlist}
  \item $\unmarked{V} \cap \marked{V} = \emptyset$.
  \item Letting $V = \unmarked{V} \cup \marked{V}$, $(V, \vec{E})$ is an acyclic
    digraph with the property that $\forall i \in V$,
    $\mathrm{deg}^+(i) \in \{0, 1\}$, where
    $\mathrm{deg}^+(i)$ is the out-degree of vertex~$i$.
\end{mathlist}
The marked vertices are the elements of $\marked{V}$; the roots of $F$ are
the vertices with out-degree 0 (that is, edges are oriented towards the root),
whose set we denote by $R(F)$; finally, the trees of $F$ are its
connected components (in the weak sense, i.e.\ considering the underlying
undirected graph), and we write $T_F(i)$ for the tree containing $i$ in $F$.
\end{definition}

\newpage
\newgeometry{textwidth=390pt, top=2.0cm, bottom=2.6cm}
\enlargethispage{0.4cm}

\subsubsection{The vertex-marking process}
We now define the backward-time process $(F_t)_{t \geq 0}$ that corresponds to
the procedure described informally in Section~\ref{secIdeasProofDegree}.
Recall the notation of Proposition~\ref{propBackwardRepresentation}.
For a given realization of $\mathpzc{M}$ and $\mathpzc{P}$, and for any fixed
vertex~$v$, let $(F_t)_{t \geq 0}$ be the piecewise constant process
defined deterministically by
\begin{itemize}
  \item $F_0 = (V, \emptyset, \emptyset)$.
  \item If $t \in M_{(ij)}$, then
    $\forall k, \ell \in R(F_{t-}) \cap \unmarked{V}_{t-}$
    such that $(a_{t-}(k), a_{t-}(\ell)) = (i, j)$,
    \[
      \vec{E}_t = \vec{E}_{t-} \cup \big\{(k, \ell)\big\} \, .
    \]
  \item If $t \in P_{\{ia_t(v)\}}$, then letting
    $d_{t}(i) = \Set{j \in V \suchthat a_{t}(j) = i}$ be the set of descendants
    of $i$ born after time~$t$,
    \[
      \begin{dcases}
        \unmarked{V}_t = \unmarked{V}_{t-} \setminus d_{t}(i) \\
        \marked{V}_t = \marked{V}_{t-} \cup d_{t}(i) \, .
      \end{dcases}
    \]
\end{itemize}

What makes $(F_t)_{t \geq 0}$ interesting is that
\[
  N_G[v] = \unmarked{V}_{\infty} \, .
\]
Indeed, by construction,
\[
  i \in \unmarked{V}_t \iff
  \bigcup_{s \in [0, t]}
  \mleft( \mspace{-6mu}
  \bigcup_{\mspace{16mu}j : i \in d_s(j)} \mspace{-16mu} P_{\{ja_s(v)\}} \mright)
  = \emptyset \,, 
\]
and since for every $s$ the unique $j$ such that $i \in d_s(j)$ is $a_s(i)$,
we have
\[
  \unmarked{V}_t =
  \Set{i \in V \suchthat P^\star_{\{iv\}} \cap [0, t] = \emptyset} \, .
\]

The Poissonian construction given above shows that $(F_t, a_t)_{t \geq 0}$
is a Markov process. Now, observe that conditional on $a_t$
\begin{mathlist}
\item $M_{(ij)} \cap \OpenInterval{t, +\infty}
       \;\sim\;
       M_{(a_t(i)a_t(j))} \cap \OpenInterval{t, +\infty}$
       and is independent of $(F_s, a_s)_{s \leq t}$
\item $P_{\{ia_t(v)\}} \cap \OpenInterval{t, +\infty}
       \;\sim\;
       P_{\{a_t(i)a_t(v)\}} \cap \OpenInterval{t, +\infty}$
       and is independent of $(F_s, a_s)_{s \leq t}$
\item $j \in d_t(i) \iff i \in R(F_t) \text{ and } j \in T_{F_t}(i)$
\end{mathlist}
As a consequence, $(F_t)_{t \geq 0}$ is also a Markov process, whose law is
characterized~by
\begin{itemize}
  \item $F_0 = (V, \emptyset, \emptyset)$.
  \item $F_t$ goes from $(\unmarked{V}_t, \marked{V}_t, \vec{E}_t)$ to
    \begin{itemize}
      \item $\big(\unmarked{V}_t,\, \marked{V}_t,\, \vec{E}_t \cup \{(i, j)\}\big)$
        at rate 1/2, for all $i, j$ in $R(F_t)$
      \item $\big(\unmarked{V}_t \setminus T_{F_t}(i),\, \marked{V}_t \cup
        T_{F_t}(i),\, \vec{E}_t\big)$ at rate $r_n$, for all $i$ in $R(F_t)$.
    \end{itemize}
\end{itemize}

\newpage
\newgeometry{textwidth=390pt, top=2.4cm, bottom=2.8cm}

Let $(\widetilde{F}_k)_{k \in \{1, \ldots, n\}}$ be the chain
embedded in $(F_t)_{t \geq 0}$, i.e.\ defined by
\[
  \widetilde{F}_k = F_{t_k}, \;
  \text{where }
  t_k = \inf \Set*[\big]{t \geq 0 \suchthat \Abs{R(F_t)} = n - k + 1} \,.
\]
The rooted forests with marked vertices that correspond to realizations of
$\widetilde{F}_n$ are exactly the $f_n = (\unmarked{V}, \marked{V}, \vec{E})$ 
that have $n$ vertices and are such that $\unmarked{V} = T_{f_n}(v)$. Moreover,
for each of these there exists a unique trajectory $(f_1, \ldots, f_n)$ 
of $(\widetilde{F}_1, \ldots, \widetilde{F}_n)$ such that
$\widetilde{F}_n = f_n$ and it follows from the transition rates of
$(F_t)_{t \geq 0}$ that 
\begin{align} \label{eqLawVertexMarking}
  \Prob{\widetilde{F}_n = f_n}
  &= \frac{(1/2)^{n - |R(f_n)| }\; r_n^{|R(f_n)| - 1}}{
    \prod\limits_{k = 2}^n (k(k - 1) / 2 + (k - 1) r_n)} \nonumber \\
  &= \frac{1}{(n - 1)!} \times \frac{(2\,r_n)^{|R(f_n)| - 1}}{
    \prod\limits_{k = 2}^n (k + 2\,r_n)}
\end{align}
Finally, note that $\unmarked{\widetilde{V}}_n = \unmarked{V}_\infty$ is the
closed neighborhood of $v$ in our graph.

\subsubsection{The branching process}
The process with which we will couple the vertex-marking process described in
the previous section is a simple random function of the trajectories
of a branching process with immigration $(Z_t)_{t \geq 0}$.
In this branching process, immigration occurs at rate $2\,r_n$
and each particle gives birth to a new particle at rate $1$ -- except for one
particle, which carries a special item that enables it to give birth
at rate $2$; when this lineage reproduces, it keeps the item with probability
$1/2$, and passes it to its offspring with probability $1/2$.

Formally, we consider the Markov process on the set of rooted forests with
marked vertices (augmented with an indication of the carrier of the item),
defined by $Z_0 = (\{1\}, \emptyset, \emptyset, 1)$ and
by the following transition rates:

$(Z_t)_{t \geq 0}$ goes from $(\unmarked{W},\, \marked{W},\, \vec{E},\, c)$ to
\begin{itemize}
  \item $\big(\unmarked{W} \cup \{N\},\; \marked{W},\;
    \vec{E} \cup \{(N, i)\},\; c\big)$
    at rate~$1$, for all $i \in \unmarked{W}$
  \item $\big(\unmarked{W},\; \marked{W} \cup \{N\},\;
    \vec{E} \cup \{(N, i)\}, \; c\big)$
    at rate~$1$, for all $i \in \marked{W}$
  \item $\big(\unmarked{W} \cup \{N\},\; \marked{W},\;
    \vec{E} \cup \{(N, c)\},\; N\big)$
    at rate~$1$
  \item $\big(\unmarked{W},\; \marked{W} \cup \{N\},\; \vec{E},\; c \big)$
    at rate~$2\,r_n$ 
\end{itemize}
where $N = \Abs{\unmarked{W} \cup \marked{W}} + 1$ is the label of the new
particle. The fourth coordinate of $(Z_t)_{t \geq 0}$ tracks the carrier of
the item.

As previously, the Markov chain $(\widetilde{Z}_k)_{k \in \N^*}$ embedded in
$(Z_t)_{t \geq 0}$ is defined by
\[
  \widetilde{Z}_k = Z_{t_k}, \;
  \text{where }
  t_k = \inf \Set*[\big]{t \geq 0 \suchthat
  \Abs{\unmarked{W}_t \cup \marked{W}_t} = k}\, .
\]
The realizations of $\widetilde{Z}_n$ are exactly the 
$(\unmarked{W}_n, \marked{W}_n, \vec{E}_n, c_n)$ such that $f_n =
(\unmarked{W}_n, \marked{W}_n, \vec{E}_n)$ is a rooted forest with marked
vertices on $\Set{1, \ldots, n}$ and $\unmarked{W}_n = T_{f_n}(1) =
T_{f_n}(c_n)$. For these, it follows from the transition rates of
$(Z_t)_{t \geq 0}$ that
\begin{equation} \label{eqLawBPImmigration}
  \Prob{\widetilde{Z}_n = (\unmarked{W}_n, \marked{W}_n, \vec{E}_n, c_n)} =
  \frac{(2\,r_n)^{|R(f_n)| - 1}}{\prod\limits_{k = 1}^{n - 1}(k + 1 + 2\,r_n)}\,.
\end{equation}

Finally, note that $(X_k)_{k \in \N^*} = \big(|\unmarked{\widetilde{W}}_k|,\,
|\marked{\widetilde{W}}_k|\big)_{k \in \N^*}$, which counts the number of
descendants of the first particle and the number of descendants of immigrants,
is a Markov chain whose law is characterized by $X_1 = (1, 0)$ and $X_k$ goes
from $(p, q)$ to
\begin{itemize}
  \item $(p + 1, q)$ with probability $\frac{p + 1}{p + 1 + q + 2r_n}$
  \item $(p, q + 1)$ with probability $\frac{q + 2r}{p + 1 + q + 2r_n}$.
\end{itemize}

\subsubsection{Relabeling and end of proof}
The last step before finishing the proof of Theorem~\ref{thmDegreeDistribution}
is to shuffle the vertices of the forest associated to $\widetilde{Z}_n$
appropriately. For any fixed $n$, $v$ and $c$ in $\{1, \ldots, n\}$, let
$\Phi_{(c, v)}$ be uniformly and independently of anything else picked among
the permutations of $\{1, \ldots, n\}$ that map $c$ to $v$; define
$\Phi_v(\widetilde{Z}_n)$ by
\[
  \Phi_v\mleft(\unmarked{\widetilde{W}}_n, \marked{\widetilde{W}}_n,
  \widetilde{E}_n, \widetilde{c}_n\mright) = 
  \mleft(\Phi_{(\tilde{c}_n, v)}(\unmarked{\widetilde{W}}_n),\;
  \Phi_{(\tilde{c}_n, v)}(\marked{\widetilde{W}}_n),\;
  \Phi_{(\tilde{c}_n, v)}(\widetilde{E}_n)
  \mright)
\]
where $\Phi_{(\tilde{c}_n, v)}(\widetilde{E}_n)$ is to be understood as
$\Set{\big(\Phi_{(\tilde{c}_n, v)}(i), \Phi_{(\tilde{c}_n, v)}(j)\big) \suchthat
(i, j) \in \widetilde{E}_n}$.

With all these elements, the proof of Theorem~\ref{thmDegreeDistribution} goes
as follows. First, from equations~\eqref{eqLawVertexMarking} and
\eqref{eqLawBPImmigration} and the definition of $\Phi_v$,
we see that for all rooted forest with marked vertices $f_n$, 
\[
  \Prob{\widetilde{F}_n = f_n} =
  \Prob{\Phi_v(\widetilde{Z}_n) = f_n} \, .
\]
In particular, $\unmarked{\widetilde{V}}_n$, the set of unmarked vertices
in the vertex-marking process,
and $\Phi_{(\tilde{c}_n,v)}(\unmarked{\widetilde{W}}_n)$,
the relabeled set of descendants of the first particle in the branching
process, have the same law.  Now, on the one hand we have
\[
  \Abs{\unmarked{\widetilde{V}}_n} = \Abs[\big]{N_G[v]} = D^{(v)}_n + 1 \, , 
\]
and on the other hand we have
\[
  \Abs{\Phi_{(\tilde{c}_n,v)}(\unmarked{\widetilde{W}}_n)} =
  \Abs{\unmarked{\widetilde{W}}_n} \, . 
\]

Since $\Abs{\unmarked{\widetilde{W}}_n}$
is the first coordinate of the Markov chain $(X_k)_{k \in \N^*}$
introduced in the previous section, it follows directly from the
transition probabilities of $(X_k)_{k \in \N^*}$ that
\[
  \Prob*{\big}{X_n = (k + 1, n - k - 1)} =
 \binom{n - 1}{k} \, \frac{\prod\limits_{p = 1}^{k}(p + 1) \,
  \prod\limits_{q = 0}^{n - k - 2}
  (q + 2\,r_n)}{\prod\limits_{(p+q) = 1}^{n - 1} \!
  \big((p\!+\!q) + 1 + 2\,r_n\big)} \, , 
\]
from which the expression of Theorem~\ref{thmDegreeDistribution} can be
deduced through elementary calculations.



\section{Connected components in the intermediate regime} \label{secConnectedComponents}

From a biological perspective, connected components are good candidates to
define species, and have frequently been used to that end.
Moreover, among the possible definitions of species, they play a special role
because they indicate how coarse the partition of the set of populations into
species can be; indeed, it would not make sense biologically for distinct
connected components to be part of the same species. As a result, connected
components are in a sense the ``loosest'' possible definition of species.  This
complements the perspective brought by complete subgraphs, which inform us on
how fine the species partition can be (see Section~\ref{secCompleteSubgraphs}).
For a discussion of the definition of species in a context where traits and
ancestral relationships between individuals are known,
see~\cite{ManceauLambert2016}.

The aim of this section is to prove the following theorem.

\begin{theorem} \label{thmConnectedComponents}
Let $\nbcc_n$ be the number of connected components of $G_{n, r_n}$. If
$r_n$ is both $\SmallOmega{1}$ and $\SmallOh{n}$, then
\[
  \frac{r_n}{2} + \SmallOhP{r_n}
  \leq \; \nbcc_n \; \leq
  2\,r_n \log n + \SmallOhP{r_n \log n}
\]
where, for a positive sequence $(u_n)$, $\SmallOhP{u_n}$ denotes a given
sequence of random variables $(X_n)$ such that $X_n/u_n \to 0$ in probability.
\end{theorem}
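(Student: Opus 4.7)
The plan is to handle the two bounds separately, using the forward construction of Proposition~\ref{propForwardConstruction} for the upper bound and a Cauchy--Schwarz argument on component sizes for the lower bound.

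For the upper bound, the key observation is that in the forward construction a vertex duplication never alters the number of connected components: the new vertex is attached to its parent together with all of the parent's neighbors, so it simply joins its parent's component. Consequently $\nbcc_n$ changes only when an edge is removed, and then by $+1$ precisely when the removed edge is a bridge. Writing $\nbcc_n = 1 + B_n$ where $B_n$ is the total number of bridges removed during the construction, I would bound $B_n$ as follows. Any graph on $k$ vertices has at most $k-1$ bridges (they form a subforest of the graph), so during the phase in which the graph has $k$ vertices, bridges are removed at cumulative rate at most $r_n(k-1)$; this phase has mean duration $2/(k(k-1))$. The expected number of bridge removals in this phase is therefore at most $2r_n/k$, and summing over $k = 2, \ldots, n-1$ yields $\Expec{\nbcc_n} \leq 1 + 2r_n \log n \cdot (1 + o(1))$. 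The $\SmallOhP{r_n \log n}$ upgrade should follow from a matching variance bound, which is plausible because bridge-removal counts in disjoint phases are close to independent.

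For the lower bound, I would use the Cauchy--Schwarz inequality $\bigl(\sum_c |c|\bigr)^{\!2} \leq \nbcc_n \cdot \sum_c |c|^2$ applied to the component sizes of $G_{n,r_n}$, which gives $\nbcc_n \geq n^2/T_n$ with $T_n = \sum_c |c|^2 = \sum_v |C(v)|$. It thus suffices to show $T_n \leq (2 + \SmallOhP{1}) \, n^2/r_n$. By exchangeability $\Expec{T_n} = n + n(n-1)\,p_n$, where $p_n$ is the probability that two fixed distinct vertices lie in the same component, so the core of the argument reduces to establishing the bound $p_n \leq (2/r_n)(1 + o(1))$ together with a concentration estimate for $T_n$.

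The main obstacle will be controlling $p_n$. A naive path-counting approach fails in the intermediate regime: although the edge probability is $1/(1+r_n) \sim 1/r_n$ by Proposition~\ref{propProbaEdge}, the expected number of paths of length $k$ between two vertices is of order $(n/r_n)^{k-1}/r_n$, whose sum diverges once $n/r_n \to \infty$. A more refined strategy would be to couple the exploration of the component containing a fixed vertex with a suitable branching process---in the spirit of the cluster-coupling argument behind Theorem~\ref{thmDegreeDistribution}---and show that its expected total progeny is of order $n/r_n$. The positive correlations between edges that are apparent from the covariance formulas in Table~\ref{tabFirstMoments} must be handled carefully; the cleanest way is likely to work directly with the pair-lineages in the Kingman coalescent representation of Proposition~\ref{propBackwardWithKingman}, so that the edges encountered during the exploration are treated together with the coalescent events that govern their joint law.
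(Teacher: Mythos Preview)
Your upper-bound argument via bridge counting is sound for the expectation and yields the same leading term $2\,r_n\log n$ as the paper, but by a different route: the paper instead fixes a specific spanning tree (the tree of ``founder'' pairs, equivalently the parent--child tree in the forward construction) and bounds $\nbcc_n$ by one plus the number of tree edges that have received an edge-removal event. Conditional on the genealogy, that count is Poisson with parameter $r_n L_n$, where $L_n$ is the total branch length of the Kingman coalescent; the known moments of $L_n$ then give the mean and the variance simultaneously. Your concentration step (``bridge-removal counts in disjoint phases are close to independent'') is not immediate, because the bridge set at the start of phase $k$ depends on everything that happened before; the cleanest way to close that gap is to dominate bridge removals by removals on a fixed spanning tree, which brings you back to the paper's argument.

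Your lower-bound strategy has a genuine gap. You correctly identify that everything rests on the estimate $p_n \le (2+o(1))/r_n$, but you do not prove it, and the branching-process heuristic you sketch is problematic here: in the intermediate regime the naive offspring mean is $n/r_n \to \infty$, so the exploration is not subcritical, and the positive correlations in Table~\ref{tabFirstMoments} mean the process is far from an independent branching process. Controlling $p_n$ looks at least as hard as the original problem---indeed the paper leaves the sharp order of $\nbcc_n$ open as Conjecture~\ref{conjConnectedComponents}. The paper's lower bound sidesteps all of this with a much simpler idea: it uses the very monotonicity observation you make for the upper bound, namely that $\nbcc(G^\star_{r_n}(m))$ is nondecreasing in $m$, together with the trivial bound $\nbcc(G) \ge |V(G)| - |E(G)|$ applied at the intermediate stage $m_n = \lfloor r_n\rfloor$. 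Since $G^\star_{r_n}(m_n) \sim G_{m_n, r_n}$, the exact moments in Table~\ref{tabFirstMoments} give $|E^\star_{m_n}| = m_n^2/(2r_n) + \SmallOhP{r_n}$ via Chebyshev, and hence $\nbcc_n \ge m_n - |E^\star_{m_n}| = r_n/2 + \SmallOhP{r_n}$.
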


\subsection{Lower bound on the number of connected components}
The proof of the lower bound on the number of connected components
uses the forward construction introduced in
Section~\ref{secBackwardConstruction} and the associated notation. It relies
on the simple observation that, letting $\nbcc(G)$ denote the number of
connected components of a graph $G$, $\nbcc(G^\star_{r_n}(k))$ is a
nondecreasing function of $k$. Indeed, in the sequence of events defining
$(G^\star_{r_n}(k))_{k \geq 2}$, vertex duplications do not change the number
of connected components -- because a new vertex is always linked to an existing
vertex (its `mother') and her neighbors -- and edge removals can only increase
it.  Thus, if $m_n \leq n$ and $\ell_n$ are such that
$\Prob{\nbcc(G^\star_{r_n}(m_n) \geq \ell_n} \to 1$ as $n \to \infty$, then
$\ell_n$ is asymptotically almost surely a lower bound on the number of
connected components of $G^\star_{r_n}(n)$ --- and therefore of $G_{n, r_n}$.

To find such a pair $(m_n, \ell_n)$, note that, for every graph $G$ of order
$m$, 
\[
  \nbcc(G) \geq m - \#\mathrm{edges}(G) \, .
\]
Moreover, since for any fixed $n$, $G^\star_{r_n}(m_n)$ has the same law as
$G_{m_n, r_n}$, the exact expressions for the expectation and the variance of
$|E^\star_{m_n}|$, the number of edges of $G^\star_{r_n}(m_n)$, are given in
Table~\ref{tabFirstMoments}. We see that, if $r_n$ and $m_n$ are both
$\SmallOmega{1}$ and $\SmallOh{n}$,
\[
  \Expec{|E^\star_{m_n}|} \sim \frac{m_n^2}{2\,r_n}
  \quad\text{and}\quad
  \Var{|E^\star_{m_n}|} \sim \frac{m_n^2}{4\,r_n^3} \big(m_n^2 + 2\,r_n^2\big)\,.
\]
By Chebychev's inequality,
\[
  \Prob*{\Big}{\,\big| |E^\star_{m_n}| - \Expec{|E^\star_{m_n}|} \big| \,
    \geq m_n^{1 - \epsilon}}
  \; \leq \;
  \frac{\Var{|E^\star_{m_n}|}}{m_n^{2 - 2\epsilon}} \, .
\]
When $\IsBigTheta{m_n}{r_n}$, since $r_n = \SmallOmega{1}$ the right-hand
side of this inequality goes to $0$ as $n \to +\infty$,
for all $\epsilon < 1/2$. It follows that
\[
  |E^\star_{m_n}| = \Expec{|E^\star_{m_n}|} + \SmallOhP{r_n} \, .
\]
Taking $m_n \defas \lfloor \alpha\,r_n \rfloor$, we find that
\[
  \nbcc(G^\star_{r_n}(m_n)) \geq m_n - |E^\star_{m_n}| =
  \alpha \, \big(1  - \frac{\alpha}{2}\big)\, r_n + \SmallOhP{r_n} \,.
\]
The right-hand side is maximal for $\alpha = 1$ and is then ${r_n}/{2} +
\SmallOhP{r_n}$.

\subsection{Upper bound on the number of connected components}
Our strategy to get an upper bound on the number of connected components
is to find a spanning subgraph whose number of connected components we
can estimate. A natural idea is to look for a spanning forest, because
forests have the property that their number of connected components is their
number of vertices minus their number of edges.

\begin{definition} \label{defFoundingPairs}
A pair of vertices $\{ij\}$ is said to be a \emph{founder} if it has no
ancestor other than itself, i.e., letting
$T_{\{ij\}} = \sup \Set{t \geq 0 \suchthat a_t(i) \neq a_t(j)}$ be the
coalescence time of $i$ and $j$, $\{ij\}$ is a founder if and only if
$\forall t < T_{\{ij\}}$, $\{a_t(i)\,a_t(j)\} = \{ij\}$.
\end{definition}

Let $\mathpzc{F}$ be the set of founders of $G_{n,r_n} = (V, E)$, and let
$T_n = (V, \mathpzc{F})$. Note that $\#\mathpzc{F} = n - 1$ and that 
$T_n$ is a tree. Therefore, letting $F_n = (V, \mathpzc{F} \cap E)$ be the 
spanning forest of $G_{n,r_n}$ induced by $T_n$, we have
\[
  \nbcc_n \leq n - \#\mathrm{edges}(F_n) \, .
\]
Let us estimate the number of edges of $F_n$.
Recall Proposition~\ref{propBackwardRepresentation}.
By construction, $\forall \{ij\} \in \mathpzc{F}$,
$P^\star_{\{ij\}} = P_{\{ij\}} \cap [0, T_{\{ij\}}]$. It follows that
\[
  \#\mathrm{edges}(F_n) =
  \sum_{\{ij\} \in \mathpzc{F}}
  \Indic{P_{\{ij\}} \cap [0, T_{\{ij\}}] = \emptyset}
\]
and, as a consequence,
\[
  \nbcc_n \leq 1 + \sum_{\{ij\} \in \mathpzc{F}}
  \Indic{P_{\{ij\}} \cap [0, T_{\{ij\}}] \neq \emptyset} \, .
\]
Now, $\Indic{P_{\{ij\}} \cap [0, T_{\{ij\}}] \neq \emptyset} \leq
\#(P_{\{ij\}} \cap [0, T_{\{ij\}}])$, and since
$(P_{\{ij\}})_{\{ij\} \in \mathpzc{F}}$ are i.i.d.\ Poisson point processes
with intensity $r_n$ that are also independent of
$(T_{\{ij\}})_{\{ij\} \in \mathpzc{F}}$,
\[
  \sum_{\{ij\} \in \mathpzc{F}}\#(P_{\{ij\}} \cap [0, T_{\{ij\}}])
  \leq \#(P \cap [0, L_n]) \,, 
\]
where $P$ is a Poisson point process on $\OCInterval{0, +\infty}$ with
intensity $r_n$ and
$L_n = T_{\mathrm{MRCA}} + \sum_{\{ij\} \in \mathpzc{F}} T_{\{ij\}}$
is the total branch length of the genealogy of the vertices. Putting the
pieces together,
\[
  \nbcc_n \leq 1 + \#(P \cap [0, L_n])\,.
\]
Conditional on $L_n$, $\#(P \cap [0, L_n])$ is a Poisson random variable with
parameter $r_n L_n$. Moreover, it is known \cite{TavareTPB1984} that
\[
  \Expec{L_n} = 2 \sum_{i = 1}^{n - 1} \frac{1}{i}
  \quad\text{and}\quad
  \Var{L_n} = 4 \sum_{i = 1}^{n - 1} \frac{1}{i^2}
\]
As a result,
\[
  \Expec{\#(P \cap [0, L_n])} = r_n \Expec{L_n} \sim 2\,r_n \log n
\]
and
\[
  \Var{\#(P \cap [0, L_n])} = r_n \Expec{L_n} + \Var{r_n L_n}
  \sim 2\,r_n \log n + \alpha\,r_n^2 \, , 
\]
with $\alpha = 2\pi^2/3$. Using Chebychev's inequality, we find that
for all $\epsilon > 0$,
\[
  \Prob*{\big}{\Abs{\#(P \cap [0, L_n]) - 2\,r_n \log n} \geq \epsilon\,r_n \log n}
  = O\mleft( \frac{2}{\epsilon^2\,r_n \log(n)} +
    \frac{\alpha}{\epsilon^2\log(n)^2} \mright).
\]
The right-hand side goes to $0$ as $n \to +\infty$, which shows that
$\IsSmallOhP{\#(P \cap [0, L_n]) - 2\,r_n \log n}{r_n \log n}$ and finishes
the proof.

\begin{remark}
Using $\#(P \cap [0, L_n])$ as an upper bound for $\sum_{\{ij\} \in \mathpzc{F}}
\Indic{P_{\{ij\}} \cap [0, T_{\{ij\}}] \neq \emptyset}$ turns out not to be
a great source of imprecision, because most of the total branch length of a
Kingman coalescent comes from very short branches. As a result, when
$\IsSmallOh{r_n}{n}$, only a negligible proportion of the
$P_{\{ij\}} \cap [0, T_{\{ij\}}]$'s, $\{ij\} \in \mathpzc{F}$,
have more than one point.

By contrast, using $n - \#\mathrm{edges}(F_n)$ as an upper bound on
$\nbcc_n$ is very crude. This leads us to formulate the following conjecture:
\end{remark}
\begin{conjecture} \label{conjConnectedComponents}
\[
  \exists \alpha, \beta > 0 \st
  \Prob{\alpha r_n \leq \nbcc_n \leq \beta r_n} \tendsto{n \to \infty} 1.
\]
\end{conjecture}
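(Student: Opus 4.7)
The lower bound $\alpha r_n \leq \nbcc_n$ a.a.s.\ for any $\alpha < 1/2$ is immediate from Theorem~\ref{thmConnectedComponents}, so everything reduces to sharpening the upper bound from order $r_n \log n$ to order $r_n$. The strategy I would try is to build a coarsening of $V$ into $\BigTheta{r_n}$ clusters that, a.a.s., each lie in a single connected component of $G_{n,r_n}$.

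Fix a small constant $c > 0$ and set $t_0 = c/r_n$. Standard coalescent moment estimates give $|K_{t_0}| = \BigTheta{r_n}$ a.a.s., so the conjecture would follow if I could prove that, a.a.s.\ uniformly over blocks, the induced subgraph $G_{n, r_n}[B]$ is connected for every block $B$ of $K_{t_0}$; this would imply $\nbcc_n \leq |K_{t_0}| = \BigTheta{r_n}$. The motivation is Proposition~\ref{propBackwardWithKingman}: for any two vertices $i$ and $j$ in the same block, $L_{\{ij\}}$ lies entirely in $[0, t_0]$, so conditionally on the coalescent, $i \lkto j$ with probability at least $e^{-r_n t_0} = e^{-c}$. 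Since a typical block has size $\sim n/r_n \to \infty$, the within-block expected edge density is a positive constant and the expected number of edges inside $B$ is of order $(n/r_n)^2$, far above the connectivity threshold of any standard i.i.d.\ model.

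The main obstacle is that the edge indicators inside $B$ are correlated, through overlaps of the lineages $L_{\{ij\}}$ in the genealogy of pairs. The approach I would try is to walk down the sub-coalescent of $B$ and, at each merger of sub-blocks $B_1, B_2$ of sizes $k_1, k_2$ at a time $s \leq t_0$, establish that at least one of the $k_1 k_2$ candidate pairs in $B_1 \times B_2$ is an edge of $G_{n, r_n}$. When $k_1 k_2$ is large, a second-moment computation in the spirit of Section~\ref{secMomentsDegreeEdges}, exploiting that the relevant pair-lineages share only their late segments and hence have controlled covariance, should suffice. The delicate case is the earliest mergers inside $B$, where $k_1 = k_2 = 1$ and there is no pool of backup pairs; however, such cherry mergers typically occur at times $s \ll t_0$, at which the single candidate pair is an edge with probability close to $1$, so a union bound over cherries should remain affordable. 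Combining these two regimes uniformly over the $\BigTheta{r_n}$ blocks, and rerouting any residual stragglers through longer pair-lineages, is where I expect essentially all of the technical work to lie.
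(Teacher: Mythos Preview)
The statement in question is labeled a \emph{conjecture} in the paper and is left open there; the only rigorous result in that direction is Theorem~\ref{thmConnectedComponents}, whose upper bound $2r_n\log n$ is obtained via the spanning forest of founders and is explicitly described as ``very crude.'' There is therefore no proof in the paper to compare your attempt against.

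As to the plan itself: the scale $t_0=c/r_n$ is the natural one, $|K_{t_0}|=\BigTheta{r_n}$ is correct, and the inequality $\nbcc_n\le\sum_{B\in K_{t_0}}\nbcc(G_{n,r_n}[B])$ is a valid starting point. But two points deserve caution. First, your proposed second-moment argument for non-cherry mergers runs into strong positive correlation: for a merger of $B_1,B_2$ at time $s$, all $k_1k_2$ crossing pair-lineages share the segment from $\max(s_1,s_2)$ to $s$ (with $s_i$ the internal MRCA time of $B_i$), and a single Poisson atom on that shared segment kills every crossing edge simultaneously; so the failure probability is bounded below by a constant when $s$ is of order $t_0$, regardless of $k_1k_2$. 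Second, if instead you fall back on the within-block founder tree (one candidate edge per merger), a direct computation of the expected number of failed founders with merger time below $t_0$ gives order $r_n\log(n/r_n)$, not $O(r_n)$: at each dyadic scale $T\in[2^{-j}t_0,2^{-(j-1)}t_0]$ there are $\BigTheta{2^j r_n}$ mergers with failure probability $\BigTheta{2^{-j}}$, and there are $\BigTheta{\log(n/r_n)}$ such scales. So the ``rerouting of residual stragglers'' that you defer to the last sentence is not cleanup but the entire content of the conjecture; as written, your proposal is a reasonable programme rather than a proof, which is consistent with the paper's own stance.
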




\section{Number of edges in the sparse regime} \label{secNumberOfEdges}

From the expressions obtained in section~\ref{secMomentsDegreeEdges} and
recapitulated in Table~\ref{tabFirstMoments},
we see that when $\IsSmallOmega{r_n}{n}$,
\[
  \Expec{|E_n|} \sim \Var{|E_n|} \sim \frac{n^2}{2\,r_n} \, .
\]
This suggests that the number of edges is Poissonian in the sparse regime,
and this is what the next theorem states.

\begin{theorem} \label{thmPoissonianEdges}
Let $|E_n|$ be the number of edges of $G_{n, r_n}$. If $\IsSmallOmega{r_n}{n}$
then
\[
  d_{\mathrm{TV}}\big(|E_n|, \mathrm{Poisson}(\lambda_n)\big)
  \tendsto{n \to +\infty} 0 \, , 
\]
where $d_{\mathrm{TV}}$ stands for the total variation distance and
$\lambda_n = \Expec{|E_n|} \sim \frac{n^2}{2 r_n}$. If in addition
$\IsSmallOh{r_n}{n^2}$, then $\lambda_n \to +\infty$ and as a result
\[
 \frac{|E_n| - \lambda_n}{\sqrt{\lambda_n}}
 \tendsto[\mathcal{D}]{n\to+\infty} \mathcal{N}(0, 1) \, ,
\]
where $\mathcal{N}(0, 1)$ denotes the standard normal distribution.
\end{theorem}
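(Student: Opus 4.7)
The plan is to apply the Chen--Stein method for Poisson approximation to $|E_n| = \sum_{\alpha \in V^{(2)}} I_\alpha$, where $I_\alpha = \mathbf{1}_{\alpha \in E_n}$ has common mean $p_n = 1/(1+r_n)$. I would invoke the Arratia--Goldstein--Gordon inequality for positively associated indicators,
\[
  d_{\mathrm{TV}}\bigl(|E_n|, \mathrm{Poisson}(\lambda_n)\bigr)
  \;\leq\;
  (1 \wedge \lambda_n^{-1})\Bigl[\mathrm{Var}(|E_n|) - \lambda_n + 2\sum_\alpha p_n^2\Bigr].
\]

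First I would establish positive association of the family $(I_\alpha)$. Using Proposition~\ref{propBackwardWithKingman}, $I_\alpha = \mathbf{1}\{P^\bullet \cap L_\alpha = \emptyset\}$ is a decreasing function of a single Poisson point process $P^\bullet$ on the pair genealogy $\mathcal{G}$; conditionally on $\mathcal{G}$, the Harris--FKG inequality for Poisson point processes yields positive association of the $I_\alpha$'s. Because $(\mathcal{G}, P^\bullet)$ can be built from a common product-Poissonian input in which each $I_\alpha$ is coordinatewise monotone, positive association is preserved after integrating out $\mathcal{G}$.

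Next I would bound the bracketed quantity using Table~\ref{tabFirstMoments}. Since $\mathrm{Var}(I_\alpha) = p_n - p_n^2$, the bracket rewrites as $\sum_\alpha p_n^2 + 2\sum_{\alpha \neq \beta}\mathrm{Cov}(I_\alpha, I_\beta)$, the second sum over unordered pairs. Counting $\binom{n}{2}$ diagonal terms of order $1/r_n^2$, $\Theta(n^3)$ pairs of edges sharing a vertex with covariance $\Theta(1/r_n^2)$, and $\Theta(n^4)$ disjoint pairs with covariance $\Theta(1/r_n^3)$, I obtain a total of $O(n^3/r_n^2 + n^4/r_n^3)$. Under the hypothesis $r_n = \omega(n)$, each summand is $o(n^2/r_n) = o(\lambda_n)$. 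Multiplying by $(1 \wedge \lambda_n^{-1})$ gives a bound that is $o(1)$ regardless of whether $\lambda_n \to 0$, stays bounded, or tends to $+\infty$, establishing the total variation convergence.

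For the CLT, the extra condition $r_n = o(n^2)$ forces $\lambda_n \to +\infty$, so by the classical Poisson-to-normal limit $(Y_n - \lambda_n)/\sqrt{\lambda_n} \Rightarrow \mathcal{N}(0,1)$ for $Y_n \sim \mathrm{Poisson}(\lambda_n)$; since $d_{\mathrm{TV}}$ is preserved by the affine map $x \mapsto (x - \lambda_n)/\sqrt{\lambda_n}$, the standardized $|E_n|$ shares this weak limit. The main obstacle I anticipate is not the covariance accounting (mechanical, given Table~\ref{tabFirstMoments}), but the unconditional positive-association step: one must realize the entire edge family as coordinatewise monotone functions of a single independent family, since the naive condition-then-mix argument does not by itself pass positive association through the mixture.
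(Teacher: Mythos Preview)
Your overall architecture --- apply the Stein--Chen bound for positively related Bernoulli sums, then plug in the exact moments from Table~\ref{tabFirstMoments} --- is exactly the paper's. Your covariance bookkeeping ($\binom{n}{2}$ diagonal terms, $\Theta(n^3)$ adjacent pairs, $\Theta(n^4)$ disjoint pairs, orders $r_n^{-2}, r_n^{-2}, r_n^{-3}$) and the CLT step via Poisson-to-normal are likewise the same as in the paper.

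The one genuine divergence is in how positive relation is obtained, and here your self-diagnosis is accurate: the obstacle you anticipate is real, and your proposal does not close it. The claim that ``$(\mathcal{G}, P^\bullet)$ can be built from a common product-Poissonian input in which each $I_\alpha$ is coordinatewise monotone'' is asserted without justification, and it is not true for the natural input $(\mathpzc{M}, \mathpzc{P})$ of Proposition~\ref{propBackwardRepresentation}: adding an atom to some $M_{(k\ell)}$ reroutes the ancestor functions $a_t$ in a way that can both create and destroy intersections of the form $P_{\{a_t(i)a_t(j)\}}$, so $I_{\{ij\}}$ is \emph{not} monotone in $\mathpzc{M}$. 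The conditional-FKG-then-mix route fails for exactly the reason you name, and no alternative monotone realization is supplied.

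The paper sidesteps FKG entirely. It proves positive relation (in the coupling sense of Definition~\ref{defPositiveRelation}) directly, by induction on $n$ via the forward construction of Section~\ref{secForwardConstruction}: passing from $G^\star_r(n)$ to $G^\star_r(n+1)$, each new edge indicator factors as $X'_k = X_{M_k} A_k$, where the $A_k = \Indic{e_k > T}$ are shown to be positively related by an elementary coupling (Lemma~\ref{lemmaPositiveRelation}), and the induction hypothesis furnishes the coupling for the $X_{M_k}$'s. The two couplings are then combined multiplicatively and checked via Lemma~\ref{lemmaCharactVectorBernoulli}. This is Proposition~\ref{propPositiveAssociation}, and it is the technical core of the section --- precisely the piece your sketch is missing.
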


The proof of Theorem~\ref{thmPoissonianEdges} is a standard application of the
Stein--Chen method \cite{Stein1972, ChenAnnalsProb1975}.
A reference on the topic is \cite{Barbour1992},
and another excellent survey is given in \cite{RossProbabilitySurveys2011}.
Let us state briefly the results that we will need.

\begin{quoteddefinition} \label{defPositiveRelation}
The Bernoulli variables $X_1, \ldots, X_N$ are said to be \emph{positively
related} if for each $i = 1, \ldots, N$ there exists 
$(X_1^{(i)}, \ldots, X_N^{(i)})$, built on the same space as
$(X_1, \ldots, X_N)$, such that
\begin{mathlist}
\item $\big(X_1^{(i)}, \ldots, X_N^{(i)}\big) \sim (X_1, \ldots, X_N) \mid X_i = 1$.
\item For all $j = 1, \ldots N$, $X^{(i)}_j \geq X_j$.
\end{mathlist}
\end{quoteddefinition}

Note that there are other equivalent definitions of positive relation (see
e.g.\ Lemma~{4.27} in~\cite{RossProbabilitySurveys2011}).
Finally, we will need the following classic theorem, which
appears, e.g., as Theorem~{4.20} in~\cite{RossProbabilitySurveys2011}.

\begin{quotedtheorem} \label{thmSteinChenMethod}
Let $X_1, \ldots, X_N$ be positively related Bernoulli variables
with $\Prob{X_i = 1} = p_i$. Let $W = \sum_{i=1}^N X_i$ and
$\lambda = \Expec{W}$. Then, 
\[
  d_{\mathrm{TV}}(W, \mathrm{Poisson}(\lambda)) \leq
  \min\{1, \lambda^{-1}\}\left(\Var{W} - \lambda + 2 \sum_{i=1}^N p_i^2 \right).
\]
\end{quotedtheorem}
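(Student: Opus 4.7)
The plan is to apply the Stein--Chen method in its standard form and then exploit the positive-relation coupling to rewrite the Stein error in terms of moments of the differences $D_i := W^{(i)} - W$. The proof I would present has four steps: (1) the Stein equation and the magic-factor bound; (2) reexpression of the error via the coupling; (3) a telescoping estimate on $f(W+1) - f(W^{(i)})$; (4) an exact moment identity that collapses the sum to $\Var{W} - \lambda + 2\sum_i p_i^2$.

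The first step is classical and I would mostly cite it. For $Z \sim \mathrm{Poisson}(\lambda)$, the Stein characterization $\Expec{\lambda f(Z+1) - Z f(Z)} = 0$ (for all bounded $f$) yields, for each $A \subseteq \{0,1,2,\ldots\}$, a bounded solution $f_A$ of
\[
  \lambda f_A(k+1) - k f_A(k) = \Indic{k \in A} - \Prob{Z \in A}.
\]
The ``magic factor'' bound $\|\Delta f_A\|_\infty \leq \min\{1, \lambda^{-1}\}$, where $\Delta f(k) := f(k+1) - f(k)$, I would simply quote from \cite{Barbour1992}. Combined with $d_{\mathrm{TV}}(W, \mathrm{Poisson}(\lambda)) = \sup_A \bigl|\Expec{\lambda f_A(W+1) - W f_A(W)}\bigr|$, this reduces everything to bounding $\bigl|\Expec{\lambda f(W+1) - W f(W)}\bigr|$ for arbitrary $f$ with $\|\Delta f\|_\infty \leq \min\{1,\lambda^{-1}\}$.

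Now I would use the positive-relation coupling. Since $(X_j^{(i)})_j$ has the law of $(X_j)_j$ conditioned on $X_i = 1$, one has $\Expec{X_i g(W)} = p_i\,\Expec{g(W^{(i)})}$ for any $g$, where $W^{(i)} := \sum_j X_j^{(i)}$. Summing over $i$,
\[
  \Expec{\lambda f(W+1) - W f(W)} = \sum_{i=1}^N p_i\, \Expec{f(W+1) - f(W^{(i)})}.
\]
Set $D_i := W^{(i)} - W$. The coupling gives $D_i \geq 0$; moreover the conditional law forces $X_i^{(i)} = 1$, which combined with $X_j^{(i)} \geq X_j$ for $j \neq i$ yields $D_i \geq 1 - X_i$. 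In particular $\{D_i = 0\} \subseteq \{X_i = 1\}$, so $\Prob{D_i = 0} \leq p_i$ --- this is precisely what will produce the $\sum_i p_i^2$ correction.

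The workhorse estimate is the telescoping bound
\[
  |f(W+1) - f(W^{(i)})| \leq \|\Delta f\|_\infty \bigl( (D_i - 1)^+ + \Indic{D_i = 0} \bigr),
\]
verified case-by-case: if $D_i = 0$ the left side is $|\Delta f(W)|$, if $D_i = 1$ it vanishes, and if $D_i \geq 2$ one telescopes over $D_i - 1$ increments of $\Delta f$. Using the pointwise identity $(D_i - 1)^+ = D_i - 1 + \Indic{D_i = 0}$ valid for $D_i \in \{0,1,2,\ldots\}$, taking expectations and summing,
\[
  \bigl|\Expec{\lambda f(W+1) - W f(W)}\bigr|
  \leq \|\Delta f\|_\infty \sum_i p_i\bigl(\Expec{D_i} - 1 + 2\,\Prob{D_i = 0}\bigr).
\]
The closing identity is $p_i\,\Expec{D_i} = p_i\bigl(\Expec{W \mid X_i = 1} - \Expec{W}\bigr) = \sum_j \Cov{X_i, X_j}$, so $\sum_i p_i \Expec{D_i} = \Var{W}$; together with $\sum_i p_i = \lambda$ and $\sum_i p_i\Prob{D_i = 0} \leq \sum_i p_i^2$, the right-hand side collapses to $\min\{1,\lambda^{-1}\}\bigl(\Var{W} - \lambda + 2\sum_i p_i^2\bigr)$. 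The main obstacle I anticipate is the telescoping step: the naive Lipschitz estimate $|f(W+1) - f(W^{(i)})| \leq \|\Delta f\|_\infty |W^{(i)} - W - 1|$ undercounts when $D_i = 0$ (the true difference is then $\Delta f(W) \neq 0$ while $|W^{(i)} - W - 1| = 1$ is fine, but the accounting of the correction requires the precise split above), and it is exactly this case that produces the $2\sum_i p_i^2$ term in the final bound; the magic-factor bound and the supremum representation of $d_{\mathrm{TV}}$ are standard and would be cited.
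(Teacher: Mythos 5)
Your proposal is correct, but there is nothing in the paper to compare it against: the paper deliberately does \emph{not} prove this statement. It is set off as a ``quoted theorem'' and attributed to the literature (Theorem~4.20 of the Ross survey, with \cite{Barbour1992} as the standard reference), so the paper's ``proof'' is a citation. What you have written is a faithful reconstruction of the standard argument behind that citation, and every step checks out: the reduction to bounding $\Expec{\lambda f(W+1)-Wf(W)}$ with $\|\Delta f\|_\infty\le\min\{1,\lambda^{-1}\}$; the size-bias identity $\Expec{X_i g(W)}=p_i\,\Expec{g(W^{(i)})}$; the inclusion $\{D_i=0\}\subseteq\{X_i=1\}$ (which correctly uses both $X_i^{(i)}=1$ a.s.\ and the monotonicity $X_j^{(i)}\ge X_j$, and hence $\Prob{D_i=0}\le p_i$); the identity $\sum_i p_i\,\Expec{D_i}=\sum_{i,j}\Cov{X_i,X_j}=\Var{W}$; and the final assembly. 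One small remark: the ``telescoping bound'' you single out as the main obstacle is in fact identical to the naive Lipschitz bound, since for non-negative integers $D_i$ one has $|W+1-W^{(i)}|=|1-D_i|=(D_i-1)^+ +\Indic{D_i=0}$ exactly; the only genuine content of that step is evaluating $\Expec{|1-D_i|}=\Expec{D_i}-1+2\,\Prob{D_i=0}$, which is precisely the split you perform, and which is where positivity of the coupling ($D_i\ge0$) is indispensable. So the proof is complete and would serve as a self-contained substitute for the citation.
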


\subsection{Proof of the positive relation between the edges}

It is intuitive that the variables indicating the
presence of edges in our graph are positively related, because the only way
through which these variables depend on each other is through the fact that the
edges share ancestors. Our proof is nevertheless technical.

\subsubsection{Preliminary lemmas}
In this section we isolate the proof of two useful results that are not tied
to the particular setting of our model.

\begin{lemma} \label{lemmaCharactVectorBernoulli}
Let $\mathbf{X} = (X_1, \ldots, X_N)$ be a vector of Bernoulli variables. The
distribution of $\mathbf{X}$ is uniquely characterized by the quantities
\[
  \Expec{\prod_{i \in I} X_i}, \quad
  I \subset \{1, \ldots, N\}, \, I \neq \emptyset
\]
\end{lemma}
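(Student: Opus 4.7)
The plan is to prove the lemma by a direct inclusion–exclusion argument on the Boolean lattice $\{0,1\}^N$, showing that each atom $\mathbb{P}(\mathbf{X} = \mathbf{x})$ can be recovered from the family of products in the statement.

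First, I would write the elementary indicator identity, valid because each $X_i$ takes values in $\{0,1\}$: for any $A \subseteq \{1, \ldots, N\}$, writing $\mathbf{1}_A \in \{0,1\}^N$ for the indicator vector of $A$,
\[
  \Indic{\mathbf{X} = \mathbf{1}_A}
  = \prod_{i \in A} X_i \, \prod_{j \notin A} (1 - X_j).
\]
Expanding the second product and using linearity of expectation gives
\[
  \Prob{\mathbf{X} = \mathbf{1}_A}
  = \sum_{B \subseteq A^c} (-1)^{|B|}
  \, \Expec*{\prod_{i \in A \cup B} X_i},
\]
which is a Möbius-inversion formula on the Boolean lattice. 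Each term on the right is either a quantity of the form listed in the statement (when $A \cup B \neq \emptyset$) or, in the single case $A = B = \emptyset$, the empty product, whose expectation is the constant~$1$ and therefore requires no data.

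From here I would conclude as follows: the $2^N$ atomic probabilities $\Prob{\mathbf{X} = \mathbf{1}_A}$, for $A \subseteq \{1, \ldots, N\}$, fully determine the law of a vector taking values in $\{0,1\}^N$; the display above expresses each such atom as a fixed linear combination of $1$ and of the $2^N - 1$ moments $\Expec{\prod_{i \in I} X_i}$ with $I \neq \emptyset$. Consequently two Bernoulli vectors with identical such moments have identical atomic probabilities, hence the same law.

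No step should pose any real obstacle; the only mild care needed is to treat separately the atom $A = \emptyset$, where the leading term in the inversion is the empty product (equal to $1$), so that the lemma is correctly stated with $I$ restricted to nonempty subsets. I would keep the proof to a few lines, essentially the display above together with a sentence explaining why it implies the claim.
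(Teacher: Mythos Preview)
Your proof is correct and is essentially the same inclusion--exclusion argument as the paper's: both expand $\prod_{i \in I} X_i \prod_{j \notin I}(1 - X_j)$ to express each atomic probability as an alternating sum of the moments $\Expec{\prod_{i \in J} X_i}$ over $J \supseteq I$. You are slightly more explicit about the empty-product edge case, but otherwise the arguments coincide.
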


\begin{proof}
For all $I \subset \{1, \ldots, N\}$, $I \neq \emptyset$, let
\[
  p_I = \Expec{\prod_{i \in I} X_i} \quad\text{and}\quad
  q_I = \Expec{\prod_{i \in I} X_i \prod_{j \in \Complement*{I}} (1 - X_j)} \,
\]
where the empty product is understood to be 1.

Clearly, the distribution of $\mathbf{X}$ is fully specified by $(q_I)$.
Now observe that, by the inclusion-exclusion principle,
\[
  q_I = \sum_{J \supset I} (-1)^{|J| - |I|} \, p_J \,,
\]
which terminates the proof.
\end{proof}

\begin{lemma} \label{lemmaPositiveRelation}
Let $X_1, \ldots, X_N$ be independent random nondecreasing functions from
$\COInterval{0, +\infty}$ to $\Set{0, 1}$ such that
\[
  \forall i \in \Set{1, \ldots, N},\quad
  \inf \Set{t \geq 0 \suchthat X_i(t) = 1} < +\infty \text{ almost surely.}
\]
Let $T$ be a non-negative random variable that is independent of 
$(X_1, \ldots, X_N)$. Then, $X_1(T), \ldots, X_N(T)$ are positively related.
\end{lemma}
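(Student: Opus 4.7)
My plan is to encode each $X_j$ by its hitting time of $1$ and then build an explicit coupling using an inverse-CDF construction. Since $X_j$ is nondecreasing, $\{0,1\}$-valued, and reaches $1$ almost surely, I set $T_j = \inf\Set{t \geq 0 \suchthat X_j(t) = 1}$, so that $X_j(t) = \Indic{t \geq T_j}$ and $T_j < +\infty$ a.s. By the independence of the $X_j$'s and of $T$, the variables $T, T_1, \ldots, T_N$ are mutually independent. Writing $Y_j = X_j(T) = \Indic{T \geq T_j}$, the event $\Set{Y_i = 1}$ becomes $\Set{T \geq T_i}$. To establish positive relation it suffices, for each fixed $i$, to construct on the same probability space a vector $(Y_j^{(i)})_{j=1}^N$ whose law is that of $(Y_1, \ldots, Y_N) \mid Y_i = 1$ and such that $Y_j^{(i)} \geq Y_j$ for every $j$.

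The coupling I propose keeps $T_j^{(i)} = T_j$ for $j \neq i$, samples $T_i^{(i)}$ independently with the marginal law of $T_i$ conditioned on $T_i \leq T$, and builds $T^{(i)}$ from $T$ by an inverse-CDF trick: writing $T = F_T^{-1}(U)$ for a uniform $U$ on $[0,1]$, I set
\[
  T^{(i)} = F_T^{-1}\bigl(F_T(T_i^{(i)}) + U\,(1 - F_T(T_i^{(i)}))\bigr)\,.
\]
Given $T_i^{(i)} = s$, the argument of $F_T^{-1}$ is uniform on $[F_T(s), 1]$, so $T^{(i)}$ has the law of $T$ given $T \geq s$, and moreover $T^{(i)} \geq F_T^{-1}(U) = T$ since $F_T(s) + U(1 - F_T(s)) \geq U$. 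Finally one also has $T^{(i)} \geq T_i^{(i)}$. Setting $Y_j^{(i)} = \Indic{T_j^{(i)} \leq T^{(i)}}$ then gives the candidate vector.

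Three short verifications close the argument. First, the joint law: a direct density computation shows that $(T_i^{(i)}, T^{(i)})$ has density $f_{T_i}(s)\,f_T(t)\,\Indic{s \leq t}/\Prob{T_i \leq T}$, which is exactly the law of $(T_i, T)$ conditioned on $T_i \leq T$; combined with $T_j^{(i)} = T_j$ for $j \neq i$ (and the fact that those $T_j$ are independent of $(T_i, T, U, T_i^{(i)})$), this yields the required conditional distribution for $(Y_j^{(i)})_{j=1}^N$. Second, for $j \neq i$, $T^{(i)} \geq T$ gives $\Indic{T_j \leq T^{(i)}} \geq \Indic{T_j \leq T}$, i.e.\ $Y_j^{(i)} \geq Y_j$. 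Third, for $j = i$, $T^{(i)} \geq T_i^{(i)}$ forces $Y_i^{(i)} = 1 \geq Y_i$. The main obstacle is the measure-theoretic fine print when $F_T$ has atoms: the identity $F_T^{-1}(F_T(s)) = s$ fails and the quantile construction needs a randomized distributional transform (add an independent uniform within each atom) to produce a genuine uniform $U$ with $T = F_T^{-1}(U)$ while preserving the monotone coupling; this is standard but should be stated carefully. Everything else is bookkeeping.
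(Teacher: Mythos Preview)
Your proof is correct and rests on the same key observation as the paper's: conditioning on $X_i(T)=1$ stochastically increases $T$, after which monotonicity of the $X_j$'s gives the domination for free. The paper's execution is shorter in two places. First, instead of an explicit quantile construction, it simply notes that $\Prob{T>x,\,T>t}\geq\Prob{T>x}\,\Prob{T>t}$ for all $x,t\geq 0$, integrates in $t$ against the law of $\tau_i$ to obtain $\Prob{T>x\mid T>\tau_i}\geq\Prob{T>x}$, and then invokes the abstract coupling of stochastically ordered real variables; this sidesteps entirely the atom issue you flag. Second, the paper does not resample $T_i$: it sets the $i$-th coordinate to $1$ directly and, for $j\neq i$, defines $X_j^{(i)}=X_j(S)$ where $S\geq T$ has the law of $T$ given $X_i(T)=1$ and is independent of $(X_j)_{j\neq i}$; thus there is no need to encode each $X_j$ by its hitting time, to check a joint density, or to introduce the auxiliary $T_i^{(i)}$. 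Your route is more explicit and constructive; the paper's is more economical and robust to distributional irregularities.
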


\begin{proof}
Pick $i \in \Set{1, \ldots, N}$. Now, let
$\tau_i = \inf \Set{t \geq 0 \suchthat X_i(t) = 1}$.
Assume without loss of generality that $X_i$ is left-continuous, so that
$\Set{X_i(T) = 1} = \Set{T > \tau_i}$. Next, note that,
\[
  \forall x, t \geq 0, \quad
  \Prob{T > x, T > t} \geq \Prob{T > x} \Prob{T > t}\,.
\]
Integrating in $t$ against the law of $\tau_i$, we find that
\[
  \forall x \geq 0, \quad
  \Prob{T > x \given T > \tau_i} \geq \Prob{T > x}\,.
\]
This shows that $T$ is stochastically dominated by $T^{(i)}$, where
$T^{(i)}$ has the law of $T$ conditioned on $\Set{T > \tau_i}$.
As a result, there exists $S$, built on the same space as $X_1, \ldots, X_N$ and
independent of $(X_j)_{j \neq i}$, such that $S \sim T^{(i)}$ and
${S \geq T}$. For all $j \neq i$, let $X^{(i)}_j = X_j(S)$. Since $X_j$ is
nondecreasing, ${X^{(i)}_j \geq X_j(T)}$, and since $(X_j)_{j \neq i} \independent
(T, \tau_i)$, $(X^{(i)}_j)_{j \neq i} \sim {((X_j(T))_{j \neq i} \mid X_i(T) = 1)}$.
This shows that $X_1(T), \ldots, X_N(T)$ are positively related.
\end{proof}

\begin{remark} \label{remarkPositiveRelation}
Lemma~\ref{lemmaPositiveRelation} and its proof are easily adapted to the case
where $X_1, \ldots, X_N$ are nonincreasing and such that
$\inf \Set{t \geq 0 \suchthat X_i(t) = 0} < +\infty$ almost surely.
\end{remark}

\subsubsection{Stein--Chen coupling}

\begin{proposition} \label{propPositiveAssociation}
For any $n \geq 2$ and $r > 0$, the random variables $\Indic{i \lkto j}$ for
$\{ij\} \in V^{(2)}$, which indicate the presence of edges in $G_{n, r}$, are
positively related.
\end{proposition}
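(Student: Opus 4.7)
The plan is to verify Definition~\ref{defPositiveRelation} by an explicit coupling built from the backward representation of Proposition~\ref{propBackwardRepresentation}. Fix a target pair $\{ij\} \in V^{(2)}$. The task is to construct, on the same probability space as $(\mathpzc{M},\mathpzc{P})$, a family of random variables $(\tilde X_{\{k\ell\}})_{\{k\ell\} \in V^{(2)}}$ whose joint law equals that of $(\Indic{k \lkto \ell})_{\{k\ell\}}$ conditioned on $\Indic{i \lkto j} = 1$ and which satisfies $\tilde X_{\{k\ell\}} \geq \Indic{k \lkto \ell}$ almost surely for every pair.

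The natural first move is a point-deletion coupling inside the Poisson process: keep $\mathpzc{M}$ unchanged and define $\tilde{\mathpzc{P}}$ by removing every atom of $\mathpzc{P}$ lying on the lineage $L_{\{ij\}}$ (in the genealogy of pairs of Proposition~\ref{propBackwardWithKingman}), then set $\tilde X_{\{k\ell\}} = \Indic{\tilde P^\star_{\{k\ell\}} = \emptyset}$. Pointwise dominance is immediate, because deleting atoms can only convert more pairs into edges, and by the standard ``conditional on no atoms'' property of Poisson processes the coupled indicators have the correct joint law \emph{conditionally on} $\mathpzc{M}$. The difficulty is that this naive coupling keeps $\mathpzc{M}$ unbiased, whereas the true conditional law $(\mathpzc{M},\mathpzc{P}) \mid \Indic{i \lkto j} = 1$ size-biases $\mathpzc{M}$ by $e^{-r_n T_{\{ij\}}}$, making $T_{\{ij\}}$ distributed as $\mathrm{Exp}(1+r_n)$ rather than $\mathrm{Exp}(1)$. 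To fix this, I would simultaneously couple $\mathpzc{M}$ to a size-biased version $\mathpzc{M}^\sharp$ by adding an independent $\mathrm{Exp}(r_n)$ clock that forces the two current ancestors of $i$ and $j$ to merge whenever it rings; this gives $\mathpzc{M}^\sharp$ the correct size-biased marginal and shortens every pair-lineage, so that applying the point-deletion to $\mathpzc{M}^\sharp$ still pointwise dominates the original indicators.

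The main obstacle is making the two couplings fit together so that (a) the joint law of $(\tilde X_{\{k\ell\}})$ is exactly the target conditional distribution and (b) pointwise dominance holds simultaneously for every pair---the accelerated coalescent can in principle alter the relative ordering of subsequent coalescence events in ways that are delicate to track. A cleaner alternative that bypasses the explicit coalescent coupling is to combine the point-deletion construction with Lemma~\ref{lemmaCharactVectorBernoulli} and verify the moment identities $\Expec{\prod_{\{k\ell\} \in I} \tilde X_{\{k\ell\}}} = \Expec{\prod_{\{k\ell\} \in I} \Indic{k \lkto \ell} \mid \Indic{i \lkto j} = 1}$ directly: thanks to Proposition~\ref{propBackwardWithKingman}, both sides are Laplace transforms of the Lebesgue measure of unions of lineages in the genealogy of pairs, so the proof reduces to a single integration identity rather than a full coupling of Kingman coalescents.
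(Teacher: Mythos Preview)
Your proposal identifies the right difficulty but does not close it. You correctly observe that the naive point-deletion coupling leaves the genealogy $\mathpzc{M}$ unbiased whereas the true conditional law size-biases it by $e^{-r_n T_{\{ij\}}}$, and that adding an independent $\mathrm{Exp}(r_n)$ clock forcing the current ancestors of $i$ and $j$ to merge produces the correct biased marginal for the genealogy. But neither of your two completions delivers both the correct law \emph{and} pointwise dominance. In the explicit-coupling route, once the extra clock rings (at time~$\sigma$, say), any lineage that passed through site $a_\sigma(i)$ is rerouted through $a_\sigma(j)$; for $t>\sigma$ the process $P^{\star,\sharp}_{\{k\ell\}}$ then reads $P_{\{a_t(j)\,a_t(\ell)\}}$ where $P^\star_{\{k\ell\}}$ reads $P_{\{a_t(k)\,a_t(\ell)\}}$, and these are independent Poisson processes with no inclusion either way. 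In block language: if you reuse the original Kingman events after the extra merge you do get $T^\sharp_{\{k\ell\}}\le T_{\{k\ell\}}$, but then $K^\sharp$ no longer has the Kingman law (the merged block now meets every other block at rate~$2$); if you use fresh events to restore the law, you lose the coupling. So ``shortens every pair-lineage'' is either false for the law or useless for dominance.

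The moment route has the same problem in disguise. Conditionally on $K$ you do get equality of the two product-moments, but unconditionally the left side is averaged against the unbiased law of $K$ and the right side against the law biased by $e^{-r_n T_{\{ij\}}}$; equality would force $\big|(\bigcup_I L_{\{k\ell\}})\setminus L_{\{ij\}}\big|$ to be independent of $T_{\{ij\}}$, which already fails for $n=4$ and $I=\{\{k\ell\}\}$ with $k,\ell\notin\{i,j\}$ (the overlap $L_{\{k\ell\}}\cap L_{\{ij\}}$ depends on the whole tree topology). The ``single integration identity'' is therefore not true in general, and so the point-deletion $\tilde X$ simply does not have the target conditional law.

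The paper avoids this tension entirely by switching to the forward construction of Section~\ref{secForwardConstruction} and arguing by induction on~$n$: each new edge indicator factors as $X'_k=X_{M_k}A_k$, with $X_{M_k}$ the mother-edge indicator in $G^\star_r(n)$ and $A_k=\Indic{e_k>T}$ a function of a single fresh waiting time~$T$ and fresh exponentials. The induction hypothesis provides the monotone coupling for the $X$'s, Lemma~\ref{lemmaPositiveRelation} provides one for the $A$'s, and Lemma~\ref{lemmaCharactVectorBernoulli} checks that their product has the right conditional law. The genealogy-biasing you struggle with is thus absorbed, one step at a time, into the elementary size-biasing of a single exponential variable.
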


\begin{proof}
We use the forward construction described in
Section~\ref{secForwardConstruction} and proceed by induction.
To keep the notation light, throughout the rest of the proof we index the
pairs of vertices of $G^\star_r(n) = (\Set{1, \ldots, n}, E^\star_n)$ by the
integers from $1$ to $N = \binom{n}{2}$ and, for $i \in \Set{1, \ldots, N}$, we
let $X_i = \Indic{i \in E^\star_n}$. We also make consistent use of bold
letters to denote vectors, i.e., given any family of random variables
$Z_1, \ldots, Z_p$, we write $\mathbf{Z}$ for $(Z_1, \ldots, Z_p)$.

For $n = 2$, the family $X_i$ for $i \in \Set{1, \ldots, n}$ consists of a
single variable $X_1$, so it is trivially positively related.  

Now assume that $X_1, \ldots, X_N$ are positively related in $G^\star_r(n)$,
i.e.
\begin{align} \label{eqSChInduction}
  \forall i \leq N , & \;
  \exists \mathbf{Y}^{(i)} = \big(Y^{(i)}_1, \ldots, Y^{(i)}_N\big)
  \text{ such that } \nonumber \\
  \quad(\mathrm{i})&\quad \mathbf{Y}^{(i)} \sim (\mathbf{X} \mid X_i = 1) \\
  \quad(\mathrm{ii})&\quad \forall k \leq N, \; Y^{(i)}_k \geq X_k \nonumber 
\end{align}
Remember that $G^\star_r(n + 1)$ is obtained by (1) adding a vertex to
$G^\star_r(n)$ (which, without loss of generality, we label $n + 1$) and
linking it to a uniformly chosen vertex~$u_n$ of $G^\star_r(n)$ as well as
to the neighbors of $u_n$; and (2) waiting for an exponential time $T$
with parameter $\binom{n}{2}$ while removing each edge at constant rate~$r$.

Formally, $\forall k \leq N + n$, define the ``mother'' of $k$, $M_k \in
\Set{1, \ldots, N} \cup \{\emptyset\}$, by
\begin{itemize}
  \item If $k \leq N$ (i.e., if $k$ is the label of $\{u, v\}$, with
    $1 \leq u < v \leq n$), then $M_k = k$.
  \item If $k > N$ is the label of $\{v, n+1\}$, with $1 \leq v \leq n$,
    then $M_k = \ell$, where $\ell$ is the label of $\{u_n, v\}$.
  \item If $k > N$ is the label of $\{u_n, n + 1\}$,
    then $M_k = \emptyset$.
\end{itemize}

Letting $X'_k = \Indic{k \in E^\star_{n + 1}}$, we then have
\[
  X'_k = \begin{cases}
    \ {A_k}  &\text{if } M_k = \emptyset \\
    X_{M_k} {A_k} &\text{otherwise}  \\
  \end{cases} 
\]
with $A_k = \Indic{e_k > T}$, where we recall that $T \sim \mathrm{Exp}(N)$ and,
$e_1, \ldots, e_{N + n}$ are i.i.d.\ exponential variables with parameter~$r$
that are also independent of everything else.

Note that the random functions
$\widetilde{A}_k \colon t \mapsto \Indic{e_k > t}$,
$k \in \Set{1, \ldots N + n}$ are nonincreasing and such that
$\inf \Set*{t \geq 0 \suchthat \widetilde{A}_k(t) = 0} < +\infty$ almost
surely. By Lemma~\ref{lemmaPositiveRelation} (see also
Remark~\ref{remarkPositiveRelation}), it follows that $A_1, \ldots, A_{N + n}$
are positively related.

We now pick any $i \leq \binom{n + 1}{2} = N + n$ and build a vector
$\mathbf{Y}'^{(i)}$ that has the same law as $(\mathbf{X}'\mid X_i = 1)$ and
satisfies $\mathbf{Y}'^{(i)}\geq \mathbf{X}'$.

Assume that $M_i \neq \emptyset$. In that case,
\begin{enumerate}
  \item By the induction hypothesis, there exists $\mathbf{Y}^{(M_i)}$ that
    satisfies~\eqref{eqSChInduction}.
  \item Since by $A_1, \ldots, A_{N + n}$ are positively related,
    $\exists \mathbf{B}^{(i)} \sim (\mathbf{A}\mid A_i = 1)$ such that
    $\mathbf{B}^{(i)} \geq \mathbf{A}$. 
\end{enumerate}
Note that $\mathbf{A}$, $\mathbf{B}^{(i)}$, $\mathbf{X}$ and
$\mathbf{Y}^{(M_i)}$ are all built on the same space.
Therefore, omitting the $(M_i)$ and $(i)$ superscripts to keep the notation
light, we can set $Y'_i = 1$ and, for $k \neq i$,
\[
  Y'_k = \begin{cases}
    \ {B_k}  &\text{if } M_k = \emptyset\\
    Y_{M_k} {B_k} &\text{otherwise.} \\
  \end{cases} 
\]
With this definition, $\forall J \subset \{1, \ldots, N + n\}$, 
\[
  \Expec{\prod_{\,j \in J}\! Y'_j} =
  \Expec{\prod_{\,j \in \widetilde{J}}\! Y_j} \,
  \Expec{\prod_{\,j \in J}\! {B_j}} \, , 
\]
where $\widetilde{J} = \Set{M_j \suchthat j \in J, M_j \neq \emptyset}$.
By hypothesis,
\[
  \Expec{\prod_{\,j \in \widetilde{J}}\! Y_j} =
  \Expec{\prod_{\,j \in \widetilde{J}}\! X_j \given X_{M_i} = 1} =
  \Expec*{\Big}{X_{M_i} \prod_{\,j \in \widetilde{J}}\! X_j }
  \,\big/ \; \Expec*{\big}{X_{M_i}}
\]
Similarly,
\[
  \Expec{\prod_{\,j \in J}\! {B_j} } =
  \Expec{A_i \prod_{j \in J} {A_j} } \,\big/\; \Expec*{\big}{A_i}  \, .
\]
As a result,
\begin{align*}
  \Expec{\prod_{\,j \in J}\! Y'_j}
  &= \frac{\Expec{X_{M_i} \prod_{j \in \widetilde{J}} X_j}
  \Expec{{A_i} \prod_{j \in J} {A_j}}}{\Expec{X_{M_i}} \Expec{A_i}} \\
  &= \frac{\Expec{X_{M_i} A_i \prod_{j \in J} X'_j}}{\Expec{X_{M_i} A_i}} \\
  &= \Expec{\prod_{\,j \in J}\! X'_j \given X'_i = 1}
\end{align*}
By Lemma~\ref{lemmaCharactVectorBernoulli}, this shows that
$\mathbf{Y}' \sim (\mathbf{X}' \mid X'_i = 1)$.

If $M_i = \emptyset$, we can no longer choose $\mathbf{Y}^{(M_i)}$. 
However, in that case, $X'_i$ depends only on $A_i$.
Therefore, we set $Y'_i = 1$ and, for $k \neq i$,
\[
  Y'_k = X_{M_k} {B_k}
\]
Remembering that $X'_i = {A_i}$, we then check that
\[
  \Expec{\prod_{\,j\in J}\! Y'_j} =
  \frac{\Expec{\prod_{j\in \widetilde{J}} X_j}
  \Expec{A_i \prod_{j\in J} A_j}}{\Expec{A_i}}
  = \Expec{\prod_{\,j \in J}\! X'_j \given X'_i = 1}\, .
\]

Finally, it is clear that, with both constructions of
$\mathbf{Y}^{(i)}$, ${\mathbf{Y}'}^{(i)}_k \geq \mathbf{X}'_k$.
\end{proof}


\subsection{Proof of Theorem~\ref{thmPoissonianEdges}}
Applying Theorem~\ref{thmSteinChenMethod}
to $|E_n| = \sum_{\{ij\}} \Indic{i \lkto j}$ and using the expressions in
Table~\ref{tabFirstMoments}, we get
\[
  d_{\mathrm{TV}}\Big(|E_n|, 
  \mathrm{Poisson}(\lambda_n)\Big) \leq \min \Set{1, \lambda_n^{-1}}\,C_n \, , 
\]
with $\lambda_n = \frac{n (n - 1)}{2 (r_n + 1)}$ and
\[
  C_n = \frac{n(n-1)(n^2r_n + 2nr_n^2 + nr_n - 2r_n^2 + 3r_n + 9)}{
  2\,(2\, r_n + 3) (r_n + 3) (r_n + 1)^2} \, .
\]
When $r_n = \omega(n)$,
\[
  \IsBigTheta{C_n}{\frac{n^4}{r_n^3} + \frac{n^3}{r_n^2}}\,.
\]
Now, if $r_n > \frac{n (n - 1)}{2} - 1$, so that
$\min \Set*{1, \lambda_n^{-1}} = 1$, we see that $\IsBigTheta{C_n}{n^3/r_n^2}$.
If by contrast $r_n \leq \frac{n (n - 1)}{2} - 1$ then
$\IsBigTheta{\lambda_n^{-1}C_n}{n/r_n}$. In both cases,
$\min\Set*{1, \lambda_n^{-1}}\,C_n$ goes to zero as $n \to +\infty$, proving the
first part of Theorem~\ref{thmPoissonianEdges}.

The convergence of $\frac{|E_n| - \lambda_n}{\sqrt{\lambda_n}}$ to the standard
normal distribution is a classic consequence of the conjunction of
$d_{\mathrm{TV}}(|E_n|, \mathrm{Poisson}(\lambda_n)) \to 0$
with $\lambda_n \to +\infty$. See, e.g.,~\cite{Barbour1992},
page~17, where this is recovered as a consequence of inequality~(1.39).


\section*{Acknowledgements}
François Bienvenu thanks Jean-Jil Duchamps for helpful discussions.
All authors thank the \textit{Center for Interdisciplinary Research in Biology}
(CIRB) for funding.
Florence Débarre thanks the Agence Nationale de la Recherche for funding (grant
ANR-14-ACHN- 0003-01).


\bibliographystyle{elsarticle-num}
\bibliography{biblio}

\begin{thebibliography}{10}
\expandafter\ifx\csname url\endcsname\relax
  \def\url#1{\texttt{#1}}\fi
\expandafter\ifx\csname urlprefix\endcsname\relax\def\urlprefix{URL }\fi
\expandafter\ifx\csname href\endcsname\relax
  \def\href#1#2{#2} \def\path#1{#1}\fi

\bibitem{ChungJCB2003}
F.~Chung, L.~Lu, T.~G. Dewey, D.~J. Galas, Duplication models for biological
  networks, J.\ Comput.\ Biol.\ 10~(5) (2003) 677--687.

\bibitem{IspolatovPhysRevE2005}
I.~Ispolatov, P.~Krapivsky, A.~Yuryev, Duplication-divergence model of protein
  interaction network, Phys.\ Rev.~E 71~(6) (2005) 061911.

\bibitem{SoleACS2002}
R.~Sol{\'{e}}, R.~Pastor-Satorras, E.~Smith, T.~B. Kepler, A model of
  large-scale proteome evolution, Adv.\ Complex Syst.\ 5~(1) (2002) 43--54.

\bibitem{VazquezComPlexUs2003}
A.~V{\'{a}}zquez, A.~Flammini, A.~Maritan, A.~Vespignani, Modeling of protein
  interaction networks, ComPlexUs 1 (2003) 38--44.

\bibitem{Poulton1904}
E.~B. Poulton, What is a species?, Proc.\ Entomol.\ Soc.\ Lond.\ 1903 (1904)
  lxxvii--cxvi.

\bibitem{Mayr1942}
E.~Mayr, Systematics and the Origin of Species from the Viewpoint of a
  Zoologist, Columbia University Press, 1942.

\bibitem{CoyneOrr2004}
J.~A. Coyne, H.~A. Orr, Speciation, Sinauer Associates, 2004.

\bibitem{Moran1958}
P.~A.~P. Moran, Random processes in genetics, Math.\ Proc.\ Camb.\ Philos.\
  Soc.\ 54~(1) (1958) 60--71.

\bibitem{Kingman1982}
J.~F.~C. Kingman, The coalescent, Stoch.\ Process.\ Appl.\ 13~(3) (1982)
  235--248.

\bibitem{Durrett2008}
R.~Durrett, Probability models for DNA sequence evolution, 2nd Edition,
  Springer-Verlag New York, 2008.

\bibitem{Etheridge2011}
A.~Etheridge, Some mathematical models from population genetics. {\'{E}}cole
  d{'}{\'{e}}t{\'{e}} de probabilit{\'{e}}s de {S}aint-{F}lour XXXIX-2009, Vol.
  2012, Springer Science \& Business Media, 2011.

\bibitem{Durrett2010}
R.~Durrett, Probability: theory and examples, 4th Edition, Cambridge University
  Press, 2010.

\bibitem{ChauvinRouaultSPA1991}
B.~Chauvin, A.~Rouault, A.~Wakolbinger, Growing conditioned trees, Stoch.\
  Process.\ Appl.\ 39~(1) (1991) 117--130.

\bibitem{LyonsPemantlePerezAnnProbab1995}
R.~Lyons, R.~Pemantle, Y.~Peres, Conceptual proofs of {$L \log L$} criteria for
  mean behavior of branching processes, Ann.\ Probab.\ 23~(3) (1995)
  1125--1138.

\bibitem{ManceauLambert2016}
M.~Manceau, A.~Lambert, The species problem from the modeler's point of view,
  bioRxiv~075580~\href {http://dx.doi.org/10.1101/075580}
  {\path{doi:10.1101/075580}}.

\bibitem{TavareTPB1984}
S.~Tavaré, Line-of-descent and genealogical processes, and their applications
  in population genetics models, Theor.\ Popul.\ Biol.\ 26~(2) (1984) 119--164.

\bibitem{Stein1972}
C.~M. Stein, A bound for the error in the normal approximation to the
  distribution of a sum of dependent random variables, in: L.~M. Le~Cam,
  J.~Neyman, E.~L. Scott (Eds.), Proc.\ Sixth Berkeley Symp.\ Math.\ Stat.\
  Probab.\, Vol.~2, University of California Press, 1972, pp. 583--602.

\bibitem{ChenAnnalsProb1975}
L.~H.~Y. Chen, Poisson approximation for dependent trials, Ann.\ Probab.\ 3~(3)
  (1975) 534--545.

\bibitem{Barbour1992}
A.~D. Barbour, L.~Holst, S.~Janson, Poisson approximation, Oxford Studies in
  Probability, Clarendon Press, 1992.

\bibitem{RossProbabilitySurveys2011}
N.~Ross, Fundamentals of {S}tein's method, Probab.\ Surv.\ 8 (2011) 201--293.

\bibitem{LambertBrazJProbabStat2017}
A.~Lambert, Probabilistic models for the (sub)tree(s) of life, Braz.\ J.\
  Probab.\ Stat.\ 31~(3) (2017) 415--475.

\end{thebibliography}

\newpage
\appendix

\section{Proofs of Propositions~\ref{propBackwardWithKingman}
and~\ref{propForwardConstruction} and of Lemma~\ref{lemmaBackwardConstruction}}
\label{appBackwardForward}

\subsection{Proof of Propositions~\ref{propBackwardWithKingman} and~\ref{propForwardConstruction}}
\begin{repproposition}{propBackwardWithKingman}
Let $(K_t)_{t \geq 0}$ be a Kingman coalescent on $V = \Set{1, \ldots, n}$,
and let $\pi_t(i)$ denote the block containing~$i$ in the corresponding
partition at time~$t$. Let the associated genealogy of pairs be the set 
\[
  \mathcal{G} = \Set*[\Big]{\big(t,\, \{\pi_t(i)\, \pi_t(j)\}\big) \suchthat
    \{ij\} \in V^{(2)},\, t \in \COInterval{0, T_{\{ij\}}}} \,, 
\]
where $T_{\{ij\}} = \inf\Set{t \geq 0 \suchthat \pi_t(i) = \pi_t(j)}$. 
Denote by
\[
  L_{\{ij\}} = \Set*[\Big]{\big(t,\, \{\pi_t(i)\, \pi_t(j)\}\big) \suchthat
   t \in \COInterval{0, T_{\{ij\}}}}
\]
the lineage of $\{ij\}$ in this genealogy. Finally, let $P^\bullet$ be a
Poisson point process with constant intensity $r_n$ on $\mathcal{G}$ and let
$G = (V, E)$, where
\[
  E = \Set{\{ij\} \in V^{(2)} \suchthat P^\bullet \cap L_{\{ij\}} = \emptyset} \, .
\]
Then, $G \sim G_{n, r_n}$.
\end{repproposition}

\begin{proof}
Let $(a_t)_{t \geq 0}$ and $\mathpzc{P}^\star$ be as in
Proposition~\ref{propBackwardRepresentation}, and let 
\[
  \mathcal{G}^\star = \Set*[\Big]{\big(t,\, \{a_t(i)\, a_t(j)\}\big)
  \suchthat \{ij\} \in V^{(2)},\, t \in \COInterval{0, T^\star_{\{ij\}}}} \, , 
\]
where $T^\star_{\{ij\}} = \inf\Set{t \geq 0 \suchthat a_t(i) = a_t(j)}$. 
Being essentially a finite union of intervals, $\mathcal{G}^\star$ 
can be endowed with the Lebesgue measure.

As already suggested, conditional on $(a_t)_{t \geq 0}$, $\mathpzc{P}^\star$
can be seen as a Poisson point process $P^\star$ with constant intensity $r_n$ 
on $\mathcal{G}^\star$. More specifically,
\[
  P^\star = 
  \Set*[\Big]{\big(t,\, \{a_t(i)\, a_t(j)\}\big)
  \suchthat \{ij\} \in V^{(2)},\, t \in P^\star_{\{ij\}}} \, .
\]
With this formalism, writing
\[
  L^\star_{\{ij\}} = \Set*[\Big]{\big(t,\, \{a_t(i)\, a_t(j)\}\big) \suchthat
   t \in \COInterval{0, T^\star_{\{ij\}}}}
\]
for the lineage of $\{ij\}$ in this genealogy, we see that $P^\star_{\{ij\}}$
is isomorphic to $P^\star \cap L^\star_{\{ij\}}$. In particular,
\[
  P^\star_{\{ij\}} = \emptyset \iff P^\star \cap L^\star_{\{ij\}} = \emptyset
\]
Now let $(\bar{\pi}_t)_{t \geq 0}$ be defined by
\[
  \forall i \in V, \quad \bar{\pi}_t(i) =
  \Set{j \in V \suchthat a_t(j) = a_t(i)} \, .
\]
Then, $\psi \colon (t,\, \{a_t(i)\, a_t(j)\}) \mapsto
(t,\, \{\bar{\pi}_t(i)\, \bar{\pi}_t(j)\})$ is a measure-preserving
bijection from $\mathcal{G}^\star$ to $\psi(\mathcal{G}^\star)$. Therefore,
$\psi(P^\star)$ is a Poisson point process with constant intensity $r_n$ on
$\psi(\mathcal{G}^\star)$. Since $(\bar{\pi}_t)_{t \geq 0}$ has the same law as
$(\pi_t)_{t \geq 0}$ from the proposition, we conclude that
\[
  \big(\psi(\mathcal{G}^\star), \, \psi(P^\star)\big) \sim
  (\mathcal{G}, P^\bullet)
\]
which terminates the proof.
\end{proof}

\begin{repproposition}{propForwardConstruction}
For any $r > 0$, for any integer $n \geq 2$, 
\[
  \Phi_n(G^\star_r(n)) \sim G_{n, r} \, .
\]
\end{repproposition}

\begin{proof}
First, let us give a Poissonian construction of $(G^\dagger_r(t))_{t \geq 0}$.
The edge-removal events can be recovered from a collection
$\mathpzc{P}^\dagger = \big(P^\dagger_{\{ij\}}\big)_{\{ij\} \in V^{(2)}}$
of i.i.d.\ Poisson point processes with rate~$r$ on $\R$ such
that, if $t \in P_{\{ij\}}$ and there is an edge between $i$ and $j$ in
$G^\dagger_r(t-)$, it is removed at time~$t$. The duplication events induce a
genealogy on the vertices of $G^\star_r(n)$ that is independent of 
$\mathpzc{P}^\dagger$. Using a backward-time notation, let
$a^\dagger_t(i)$ denote the ancestor of $i$ at time~${(t_n - t)}$, i.e.\
$t$~time-units before we reach $G^\star_r(n)$. Observe that, by construction
of $G^\star_r(n)$,
\[
  \{ij\} \in G^\star_r(n) \iff \Set{t \geq 0 \suchthat
  t \in P^\dagger_{\{a^\dagger_t(i)\, a^\dagger_t(j)\}}} = \emptyset \, .
\]

Taking the relabeling of vertices into account, the genealogy on the vertices
of $G^\star_r(n)$ translates into a genealogy on the vertices of
$\Phi_n(G^\star_r(n))$, where the ancestor $\bar{a}_t$ function is given by
$\bar{a}_t = \Phi_n \circ a^\dagger_t \circ \Phi_n^{-1}$. To keep only the
relevant information about this genealogy, define
\[
  \bar{\pi}_t(i) = \Set{j \in V \suchthat \bar{a}_t(j) = \bar{a}_t(i)}
\]
and let
\[
  \bar{\mathcal{G}} =
    \Set*[\Big]{\big(t,\, \{\bar{\pi}_t(i)\, \bar{\pi}_t(j)\}\big)
      \suchthat \{ij\} \in V^{(2)},\, t \in \COInterval{0, T_{\{ij\}}}} \,, 
\]
where $T_{\{ij\}} = \inf\Set{t \geq 0 \suchthat \bar{\pi}_t(i) = \bar{\pi}_t(j)}$. 
As before, let us denote by
\[
  \bar{L}_{\{ij\}} =
    \Set*[\Big]{\big(t,\, \{\bar{\pi}_t(i)\, \bar{\pi}_t(j)\}\big) \suchthat
     t \in \COInterval{0, T_{\{ij\}}}}
\]
the lineage of $\{ij\}$ in this genealogy. Finally, define
\[
  \bar{P} =
   \Set*[\Big]{\big(t,\, \{\bar{\pi}_t(i)\, \bar{\pi}_t(j)\}\big)
   \suchthat \{ij\} \in V^{(2)},\, t \in
   P^\dagger_{\{a^\dagger_t(\Phi_n^{-1}(i))\, a^\dagger_t(\Phi_n^{-1}(j))\}}}\,.
\]
Then, conditional on $\bar{\mathcal{G}}$, $\bar{P}$ is a Poisson point process
with constant intensity $r_n$ on $\bar{\mathcal{G}}$. Moreover,
\begin{align*}
  \{ij\}  \in \Phi_n(G^\star_r(n))
  &\iff \{\Phi_n^{-1}(i)\, \Phi_n^{-1}(j)\} \in G^\star_r(n) \\
  &\iff \Set{t \geq 0 \suchthat t \in P^\dagger_{\{a^\dagger_t(\Phi_n^{-1}(i))\,
   a^\dagger_t(\Phi_n^{-1}(j))\}}} = \emptyset \\
  &\iff \bar{P} \cap \bar{L}_{\{ij\}} = \emptyset \, .
\end{align*}
Therefore, by Proposition~\ref{propBackwardWithKingman}, to conclude the proof
it is sufficient to show that $(\bar{\pi}_t)_{t \geq 0}$ has the same law as
the corresponding process for a Kingman coalescent. By construction, the time
to go from $k$ to $k - 1$ blocks in $(\bar{\pi}_t)_{t \geq 0}$ is an
exponential variable with parameter $\binom{k}{2}$ and thus it only remains to
prove that the tree encoded by $(\bar{\pi}_t)_{t \geq 0}$ has the same
topology as the Kingman coalescent tree. This follows directly from the
standard fact that the shape of a Yule tree with $n$ tips labeled uniformly at
random with the integers from $1$ to $n$ is the same as that of the shape of a
Kingman $n$-coalescent tree -- namely, the uniform law on the set of ranked
tree shapes with $n$ tips labeled by $\Set{1, \ldots, n}$ (see
e.g.~\cite{LambertBrazJProbabStat2017}).

Alternatively, we can finish the proof as follows: working in backward time,
for $i = 1, \ldots, n - 1$, consider the $i$-th coalescence and let $U_i$
denote the mother in the corresponding duplication in the construction of
$G^\star_r(n)$. Note that $U_i \sim \mathrm{Uniform}(\Set{1, \ldots, n - i})$,
and that the coalescing blocks are then the block that contains
$\Phi_n(U_i)$ and the block that contains $\Phi_n(n - i + 1)$. Let us
record the information about the $i$ first coalescences in the variable
$\Lambda_i$ defined by $\Lambda_0 = \emptyset$ and, for $i \geq 1$,
\[
  \Lambda_i = \big(\Phi_n(n - k + 1), \, \Phi_n(U_k)\big)_{k = 1, \ldots, i} \,.
\]
Thus, we have to show that, conditional on $\Lambda_{i - 1}$, the block
containing $\Phi_n(n - i + 1)$ and the block containing $\Phi_n(U_i)$ are
uniformly chosen. We proceed by induction. For $i = 1$, this is trivial.
Now, for $i > 1$, observe that, conditional on $\Lambda_{i - 1}$, the
restriction of $\Phi_n$ to
\[
  I_i = \Set{1, \ldots, n} \setminus \Set{\Phi_n(n), \ldots, \Phi_n(n - i)}
\]
is a uniform permutation on $I_i$.
As a result, $\{\Phi_n(n - i + 1), \, \Phi_n(U_i)\}$ is a uniformly chosen
pair of elements of $I_i$ (note that the fact that $U_i$ is uniformly
distributed on $\Set{1, \ldots, n - i}$ is not necessary for this, but is
needed to ensure that the restriction of $\Phi_n$ to $I_{i + 1}$ remains
uniform when conditioning on $\Lambda_{i}$ in the next step of the
induction). Since each block contains exactly one element of $I_i$, this
terminates the proof.
\end{proof}

\subsection{Proof of Lemma~\ref{lemmaBackwardConstruction}}
\begin{replemma}{lemmaBackwardConstruction}
Let $S$ be a subset of $V^{(2)}$. Conditional on the measure $\mathpzc{M}$, for
any interval $I \subset \COInterval{0, +\infty}$ such that
\begin{mathlist}
\item For all $\{ij\} \in S$, $\forall t \in I$, $a_t(i) \neq a_t(j)$.
\item For all $\{k\ell\} \neq \{ij\}$ in $S$, $\forall t \in I, \;
  \{a_t(i), a_t(j)\} \neq \{a_t(k), a_t(\ell)\}$,
\end{mathlist}
$P^\star_{\{ij\}} \cap I$, $\{ij\} \in S$, are independent Poisson point
processes with rate~$r_n$ on~$I$.

Moreover, for any disjoint intervals $I$ and $J$,
$(P^\star_{\{ij\}} \cap I)_{\{ij\} \in S}$ is independent of
$(P^\star_{\{ij\}} \cap J)_{\{ij\} \in S}$.
\end{replemma}

\begin{proof}
For all $t \geq 0$, define $S_t$ by
\[
  S_t = \Set{\{a_t(i), a_t(j)\} \suchthat \{ij\} \in S} \, .
\]
Set $t_0 = \inf I$ and let $t_1, \ldots, t_{m - 1}$ be the jump times of
$(S_t)_{t \geq 0}$ on $I$, i.e.\
\[
  t_p = \inf \Set{t > t_{p - 1} \suchthat S_t \neq S_{t_{p - 1}}}\,, \quad 
  p = 1, \ldots, m - 1.
\]
Finally, set $t_m = \sup I$ and, for $p = 0, \ldots, m - 1$, let
$I_p = \COInterval{t_p, t_{p + 1}}$ and $\tilde{a}_p = a_{t_p}$, so that
$(\tilde{a}_p)_{p \in \{0, \ldots, m\}}$ is the embedded chain of
$(a_{t})_{t \in I}$. With this notation, for all $\{ij\} \in S$,
\[
  P^\star_{\{ij\}} \cap I = \bigcup_{p = 0}^{m - 1}
  \big(P_{\{\tilde{a}_p(i),\, \tilde{a}_p(j)\}} \cap I_p \big) \, , 
\]
where for $p \neq q$, $I_p \cap I_q = \emptyset$, and
$P_{\{uv\}}$, $\{uv\} \in V^{(2)}$, are i.i.d.\ Poisson point processes
on $\COInterval{0, +\infty}$ with rate~$r_n$. By assumption, for all
$p = 0, \ldots, m - 1$, for all $\{ij\} \neq \{k\ell\}$ in $S$,
$\tilde{a}_p(i) \neq \tilde{a}_p(j)$, 
$\tilde{a}_p(k) \neq \tilde{a}_p(\ell)$ and
$\{\tilde{a}_p(i), \tilde{a}_p(j)\} \neq \{\tilde{a}_p(k), \tilde{a}_p(\ell)\}$.
This shows that 
$(P_{\{\tilde{a}_p(i), \tilde{a}_p(j)\}} \cap I_p)$, $\{ij\} \in S$ and
$p = 0, \ldots, m-1$, are i.i.d.\ Poisson point processes with rate $r_n$ on the
corresponding intervals, proving the first part of the lemma.

The second assertion is proved similarly. Adapting the previous notation to
work with two disjoint intervals $I$ and $J$, i.e.\ letting
$(\tilde{a}^I_p)_{p \in \{0, \ldots, m_I\}}$ be the embedded chain of
$(a_{t})_{t \in I}$ and $(\tilde{a}^J_p)_{p \in \{0, \ldots, m_J\}}$ that of
$(a_{t})_{t \in J}$, for all $\{ij\} \in S$ we write
\[
  P^\star_{\{ij\}} \cap I = 
  \bigcup_{p = 0}^{m_I - 1}
  \big(P_{\{\tilde{a}^I_p(i),\, \tilde{a}^I_p(j)\}} \cap I_p \big) \,, 
\]
and
\[
  P^\star_{\{ij\}} \cap J = 
  \bigcup_{p = 0}^{m_J - 1}
  \big(P_{\{\tilde{a}^J_p(i),\, \tilde{a}^J_p(j)\}} \cap J_p \big) \,.
\]
We conclude the proof by noting that the families
\[
  \big(P_{\{\tilde{a}^I_p(i),\, \tilde{a}^I_p(j)\}} \cap I_p \big)_{\{ij\} \in
    S,\, p \in \Set{0, \ldots, m_I}}
\]
and
\[
  \big(P_{\{\tilde{a}^J_p(i),\, \tilde{a}^J_p(j)\}} \cap J_p \big)_{\{ij\} \in
    S,\, p \in \Set{0, \ldots, m_J}}
\]
are independent, because the elements of these families are either deterministic
(if, e.g, $\widetilde{a}^I_p(i) =\widetilde{a}^I_p(j)$, in which case
$P_{\{\tilde{a}^I_p(i),\, \tilde{a}^I_p(j)\}} = \emptyset$) or Poisson
point processes on intervals that are disjoint from each of the intervals
involved in the definition of the other family.
\end{proof}



\pagebreak

\section{Proofs of Proposition~\ref{propCovDisjointEdges} and Corollary~\ref{colVarEdges}}
\label{appFirstMoments}

\begin{repproposition}{propCovDisjointEdges}
Let $i$, $j$, $k$ and $\ell$ be four distinct vertices of $G_{n, r_n}$. We have
\[
  \Cov{\Indic{i \lkto j}, \Indic{k \lkto \ell}} = 
  \frac{2\,r_n}{(1 + r_n)^2 (3 + r_n) (3 + 2\,r_n)}
\]
\end{repproposition}

\begin{repcorollary}{colVarEdges}
Let $D^{(i)}_n$ and $D^{(j)}_n$ be the respective degrees of two fixed vertices
$i$ and $j$, and let $\Abs{E_n}$ be the number of edges of $G_{n, r_n}$.
We have
\[
  \Cov{D^{(i)}_n, D^{(j)}_n} =
  \frac{r_n}{(1 + r_n)^2} \mleft(1 + \frac{3 (n - 2)}{3 + 2\,r_n} +
  \frac{2 (n - 2)(n - 3)}{(3 + r_n)(3 + 2\,r_n)}\mright)
\]
and
\[
  \Var{\Abs{E_n}} = 
  \frac{r_n\, n\,(n - 1)(n^2 + 2\,r_n^2 + 2\,n\,r_n + n + 5\,r_n + 3)}{2 \,
    (1 + r_n)^2 \, (3 + r_n) \, (3 + 2\,r_n)}
\]
\end{repcorollary}

\subsection{Proof of Proposition~\ref{propCovDisjointEdges}}
The proof of Proposition~\ref{propCovDisjointEdges} parallels that of
Proposition~\ref{propCovOverlappingEdges}, but this time the topology of the
genealogy of the pairs of vertices has to be taken into account. Indeed,
define
\[
  S_t = \Set{a_t(i), a_t(j), a_t(k), a_t(\ell)}
\]
and let $\tau_1 < \tau_2 < \tau_3$ be the times of coalescence in the
genealogy of $\Set{i, j, k, \ell}$, i.e.
\[
  \tau_p = \inf\Set{t \geq 0\suchthat \Abs{S_t} = 4 - p}, \quad p = 1, 2, 3 \,.
\]
Write $I_1 = \COInterval{0, \tau_1}$, $I_2 = \COInterval{\tau_1,
\tau_2}$ and $I_3 = \COInterval{\tau_2, \tau_3}$. Finally, for $m = 1, 2$, let
\[
  A^{(m)}_{\{uv\}} = \Set{a_{\tau_m-}(u) \neq a_{\tau_m-}(v)} \cap
  \Set{a_{\tau_m}(u) = a_{\tau_m}(v)}
\]
be the event that the $m$-th coalescence in the genealogy of
$\Set{i, j, k, \ell}$ involved the lineages of $u$ and $v$ (note that the third
coalescence is uniquely determined by the first and the second, so we do not
need $A^{(3)}_{\{uv\}}$).

On $A^{(1)}_{\{ij\}} \cap A^{(2)}_{\{k\ell\}}$,
$\Set{i \lkto j, k \lkto \ell}$ is equivalent to 
\[
  (P^\star_{\{ij\}} \cap I_1) \cup
  (P^\star_{\{k\ell\}} \cap I_1) \cup
  (P^\star_{\{k\ell\}} \cap I_2)= \emptyset
\]
so that, conditionally on $I_1$ and $I_2$,
by Lemma~\ref{lemmaBackwardConstruction},
\begin{align*}
  \Prob{i \lkto j, k \lkto \ell \given A^{(1)}_{\{ij\}} \cap A^{(2)}_{\{k\ell\}}}
  &= \Prob{(P^\star_{\{ij\}} \cup P^\star_{\{k\ell\}}) \cap I_1 = \emptyset}
  \times
  \Prob{P^\star_{\{k\ell\}} \cap I_2 = \emptyset} \\
  &= \frac{6}{6 + 2\,r_n} \times \frac{3}{3 + r_n} \, .
\end{align*}
By contrast, on $A^{(1)}_{\{ij\}} \cap A^{(2)}_{\{ik\}}$,
$\Set{i \lkto j, k \lkto \ell}$ is
\[
  (P^\star_{\{ij\}} \cap I_1) \cup
  (P^\star_{\{k\ell\}} \cap I_1) \cup
  (P^\star_{\{k\ell\}} \cap I_2) \cup
  (P^\star_{\{k\ell\}} \cap I_3) = \emptyset
\]
and thus
\[
  \Prob{i \lkto j, k \lkto \ell \given A^{(1)}_{\{ij\}} \cap A^{(2)}_{\{ik\}}}
  = \frac{6}{6 + 2\,r_n} \times \frac{3}{3 + r_n} \times \frac{1}{1 + r_n}\, .
\]

Given a realization of the topology of the genealogy of the form
$A^{(1)}_{\{u_1 v_1\}} \cap A^{(2)}_{\{u_2 v_2\}}$, we can always express
$\Set{i \lkto j, k \lkto \ell}$ as a union of intersections of
$P^\star_{\{ij\}}$ and $P^\star_{\{k\ell\}}$ with $I_1$, $I_2$ and $I_3$.
In total, there are $\binom{4}{2} \times \binom{3}{2} = 18$ possible
events $A^{(1)}_{\{u_1 v_1\}} \cap A^{(2)}_{\{u_2 v_2\}}$, each having
probability $1/18$. This enables us to compute
$\Prob{i \lkto j, k \lkto \ell}$, but in fact the calculations can be
simplified by exploiting symmetries, such as the fact that $\{ij\}$ and
$\{k\ell\}$ are interchangeable.  In the end, it suffices to consider four
cases, as depicted in Figure~\ref{figAppCovDisjointEdges}.

\begin{figure}[h!]
  \centering
  \captionsetup{width=\linewidth}
  \includegraphics[width=\linewidth]{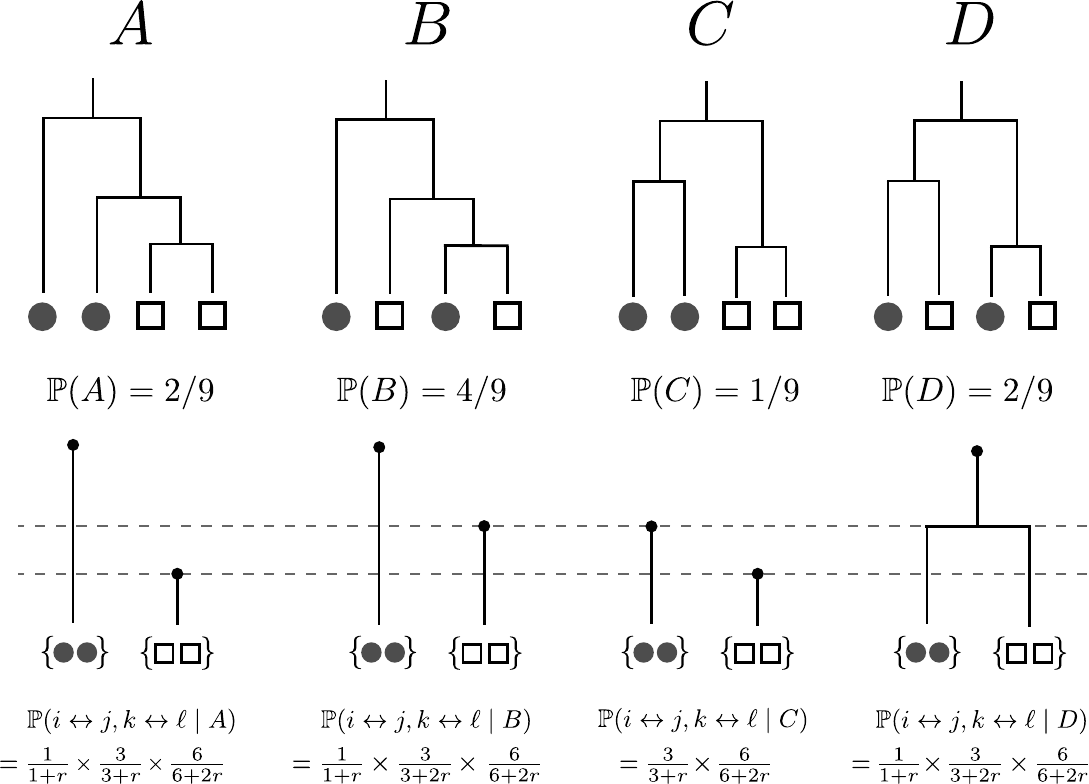}
  \caption
  {The four cases that we consider to compute
  $\Prob{i \lkto j, k \lkto \ell}$. Top, the ``aggregated'' genealogies of
  vertices and their probability. Each of these correspond to several
  genealogies on $\Set{i, j, k, \ell}$, which are obtained by labeling
  symbols in such a way that a pair of matching symbols has to correspond to
  either $\{ij\}$ or $\{k\ell\}$. For instance,
  $C = (A^{(1)}_{\{ij\}} \cap A^{(2)}_{\{k\ell\}}) \cup
  (A^{(1)}_{\{k\ell\}} \cap A^{(2)}_{\{ij\}})$ and therefore
  $\Prob{C} = 2 / 18$. Similarly, $A = (A^{(1)}_{\{ij\}} \cap A^{(2)}_{\{ik\}})
  \cup (A^{(1)}_{\{ij\}} \cap A^{(2)}_{\{i\ell\}}) \cup (A^{(1)}_{\{k\ell\}}
  \cap A^{(2)}_{\{ik\}}) \cup (A^{(1)}_{\{k\ell\}} \cap A^{(2)}_{\{jk\}})$ and
  $\Prob{A} = 4/18$, etc.
  Bottom, the associated genealogy of the pairs and
  the corresponding conditional probability of
  $\Set{i\lkto j, k \lkto \ell} \iff* \Set{\square \lkto \square,
  \text{\Large$\bullet$} \lkto \text{\Large$\bullet$}}$.}
  \label{figAppCovDisjointEdges}
\end{figure}

Putting the pieces together, we find that
\begin{align*}
  \Prob{i \lkto k, j \lkto \ell} &= \quad
  \frac{6}{9} \times \frac{1}{1 + r_n}
  \times \frac{3}{3 + 2r} \times \frac{6}{6 + 2\,r_n} \\
  &\quad + \frac{2}{9} \times \frac{1}{1 + r_n} \times \frac{3}{3 + r_n}
  \times \frac{6}{6 + 2\,r_n} \\
  &\quad + \frac{1}{9} \times \frac{3}{3 + r_n} \times \frac{6}{6 + 2\,r_n} \\[2ex]
  &= \frac{9 + 2\,r_n}{(1 + r_n)(3 + r_n)(3 + 2\,r_n)}\,.
\end{align*}
and Proposition~\ref{propCovDisjointEdges} follows, since
\[
  \Prob{i \lkto j}\, \Prob{k \lkto \ell} = \mleft(\frac{1}{1 + r_n}\mright)^2 \,.
\]

\subsection{Proof of Corollary~\ref{colVarEdges}}
Corollary~\ref{colVarEdges} is proved by standard calculations. First,
\begin{align*}
  \Cov{D^{(i)}_n, D^{(j)}_n} &= \Cov{\sum_{k\neq i} \Indic{i \lkto k},
  \sum_{\ell \neq j} \Indic{j \lkto \ell}} \\
  &= \Var{\Indic{i \lkto j}} \\
  &\quad+ 3(n - 2) \Cov{\Indic{i \lkto k}, \Indic{j \lkto k}} \\
  &\quad+ (n - 2) (n - 3) \Cov{\Indic{i \lkto k}, \Indic{j \lkto \ell}}
\end{align*}
Remembering from Proposition~\ref{propProbaEdge} that
$\Var[\normalsize]{\Indic{i \lkto j}} = r_n/(1 + r_n)^2$ and from
Proposition~\ref{propCovOverlappingEdges} that
$\Cov[\normalsize]{\Indic{i \lkto k}, \Indic{j \lkto k}} =
\frac{r_n}{(1 + r_n)^2(3 + 2\,r_n}$, and using
Proposition~\ref{propCovDisjointEdges}, we find that
\[
  \Cov{D^{(i)}_n, D^{(j)}_n} =
  \frac{r_n}{(1 + r_n)^2} \mleft(1 + \frac{3 (n - 2)}{3 + 2\,r_n} +
  \frac{2 (n - 2)(n - 3)}{(3 + r_n)(3 + 2\,r_n)}\mright) \, .
\]

Finally, to compute $\Var{\Abs{E_n}}$, we could do a similar calculation.
However, it is easier to note that
\[
\Abs{E_n} = \frac{1}{2} \sum_{i = 1}^n D^{(i)}_n \, .
\]
As a result, 
\begin{align*}
  \Var{\Abs{E_n}} &= \frac{1}{4} \mleft( n \Var{D^{(i)}_n} + n(n - 1)
  \Cov{D^{(i)}_n, D^{(j)}_n}\mright)  \\
  &= \frac{r_n\,  n\, (n - 1) (n^2 + 2\,r_n^2 + 2\,n\,r_n + n + 5\,r_n + 3)}{
  2 (1 + r_n)^2 (3 + r_n) (3 + 2\,r_n)} \, .
\end{align*}


\section{Proof of Theorem~\ref{thmDegreeConvergence}}
\label{appDegreeConvergence}
In this section, we prove Theorem~\ref{thmDegreeConvergence}.

\begin{reptheorem}{thmDegreeConvergence}[convergence of the rescaled degree] ~
\begin{mathlist}
  \item If $r_n \to r > 0$, then $\frac{D_n}{n}$ converges in distribution to a 
    $\mathrm{Beta}(2, 2\,r)$ random variable.
  \item If $r_n$ is both $\SmallOmega{1}$ and $\SmallOh{n}$, then
    $\frac{D_n}{n / r_n}$ converges in distribution to a size-biased
    exponential variable with parameter~$2$.
  \item If $2\,r_n / n \to \rho > 0$, then $D_n + 1$ converges in distribution to a
    size-biased geometric variable with parameter $\rho/(1 + \rho)$.
\end{mathlist}
\end{reptheorem}

The proof of (iii) is immediate: indeed, by Theorem~\ref{thmDegreeDistribution},
\[
  \Prob{D_n + 1 = k} = \frac{2\,r_n\,(2\,r_n + 1)}{(n + 2\,r_n) (n - 1 + 2\,r_n)}
  \, k \, \prod_{i = 1}^{k - 1} \frac{n - i}{n - i + 2\,r_n - 1}  \, .
\]
If $2\,r_n / n \to \rho$, then for any fixed~$k$ this goes to
$k \mleft(\frac{\rho}{1+\rho}\mright)^2 \mleft(\frac{1}{1+\rho}\mright)^{k-1}$
as $n \to +\infty$.

\subsection{Outline of the proof}
To prove (i) and (ii), we show the pointwise convergence of the cumulative
distribution function $F_n$ of the rescaled degree. To do so, in both cases,
\begin{enumerate}
  \item We show that, for any $\epsilon > 0$, for $n$ large enough,
    \[
      \forall y \geq 0, \quad \int_0^y f_n(x) \, dx
      \leq F_n(y) \leq \int_0^{y + \varepsilon} f_n(x) \, dx
    \]
    for some function $f_n$ to be introduced later.
  \item We identify the limit of $f_n$ as a classical probability density
    $f$, and use dominated convergence to conclude that
    \[
      \forall y \geq 0, \quad \int_0^y f_n(x) \, dx \to \int_0^y f(x) \, dx \,.
    \]
\end{enumerate}

In order to factorize as much of the reasoning as possible, we introduce
the rescaling factor $N_n$:
\begin{itemize}
  \item When $r_n \to r$, i.e.\ when we want to prove (i), $N_n = n$.
  \item When $r_n$ is both $\SmallOmega{1}$ and $\SmallOh{n}$,
    i.e.\ when we want to prove (ii), $N_n = n/r_n$.
\end{itemize}
Thus, in both cases the rescaled degree is $D_n / N_n$ and its cumulative
distribution function is
\[
  F_n(y) = \sum_{k = 0}^{\lfloor N_n y \rfloor} \Prob{D_n = k} \, .
\]

\subsection{Step 1}
For all $x > 0$, let
\[
  f_n(x) = N_n \Prob{D_n = \lfloor N_n x \rfloor}, \, 
\]
so that
\[
  \forall k \in \N, \quad
  \Prob{D_n = k} = \int_{k/N_n}^{(k + 1)/N_n} f_n(x)\,dx  \, .
\]
If follows that
\[
  F_n(y) = \int_0^{(\lfloor N_n y \rfloor + 1) / N_n} f_n(x) \, dx \, .
\]
Finally, since $y \leq \frac{\lfloor N_n y \rfloor + 1}{N_n} \leq y +
\frac{1}{N_n}$ and $f_n$ is non-negative, for any $\varepsilon > 0$, for $n$
large enough,
\[
   \forall y \geq 0, \quad
   \int_0^y f_n(x) \, dx \leq F_n(y) \leq \int_0^{y + \varepsilon} f_n(x)\,dx\,,
\]
and the rank after which these inequalities hold is uniform in $y$,
because the convergence of $(\lfloor N_n y \rfloor + 1)/{N_n}$ to $y$ is.

\subsection{Step 2}
To identify the limit of $f_n$, we reexpress it in terms of the gamma
function. Using that $\Gamma(z) = z\Gamma(z)$, by induction,
\[
  \prod_{i = 1}^k (n - i) = \frac{\Gamma(n)}{\Gamma(n - k)}
  \quad\text{and}\quad
  \prod_{i = 1}^k (n - i + 2\,r_n - 1) =
  \frac{\Gamma(n + 2\,r_n - 1)}{\Gamma(n - k + 2\,r_n - 1)} \, .
\]
Therefore, $f_n(x)$ can also be written
\begin{equation} \label{eqAppDegreeConv01}
  f_n(x) = 
  \frac{N_n\, 2\,r_n\, (2\,r_n + 1)}{(n + 2\,r_n) (n - 1 + 2\,r_n)} \,
  \big(\lfloor N_n x \rfloor + 1\big) \times P_n(x) \, , 
\end{equation}
where
\begin{equation} \label{eqAppDegreeConv02}
  P_n(x) =
  \frac{\Gamma(n)\,  \Gamma(n - \lfloor N_n x\rfloor + 2\,r_n - 1)}{\Gamma(n -
  \lfloor N_n x \rfloor)\, \Gamma(n + 2\,r_n - 1)} \, .
\end{equation}

We now turn to the specificities of the proofs of (i) and (ii).

\pagebreak

\subsubsection{Proof of (i)}
In this subsection, $r_n \to r > 0$ and $N_n = n$.

\paragraph*{Limit of $f_n$}
Recall that
\[
  \forall \alpha \in \R, \quad
  \frac{\Gamma(n + \alpha)}{\Gamma(n)} \sim n^\alpha \, .
\]
Using this in \eqref{eqAppDegreeConv02}, we see that, for all
$x \in \COInterval{0, 1}$, 
\[
  P_n(x) \to  (1 - x)^{2r - 1} \, .
\]
Therefore, for all $x \in \COInterval{0, 1}$,
\[
  f_n(x) \to 2r(2r + 1)\,  x\,  (1 - x)^{2r - 1} \, .
\]
Noting that $2r(2r + 1) = 1/ B(2, 2r)$, where $B$ denotes the beta
function, we can write $f = \lim_n f_n$ as
\[
  f\colon x \mapsto \frac{x(1 - x)^{2r - 1}}{B(2, 2r)} \,
                \Indic*{\COInterval{0, 1}}(x)
\]
and we recognize the probability density function of the $\text{Beta}(2, 2r)$
distribution.

\paragraph*{Domination of $(f_n)$}
First note that, for all $x \in \COInterval{0, 1}$,
\[
  \frac{1}{n - 1 + 2\,r_n} \prod_{i = 1}^{\lfloor n x \rfloor}
  \frac{n - i}{n - i + 2\,r_n - 1} \quad = \quad
  \frac{1}{n - \lfloor nx \rfloor + 2\,r_n - 1}
  \prod_{i = 1}^{\lfloor n x \rfloor}
  \frac{n - i}{n - i + 2\,r_n} \, , 
\]
where the empty product is understood to be $1$.  Since $2\,r_n > 0$, this
enables us to write that, for all $x \in \COInterval{0, 1}$,
\[
  f_n(x) = 
  \underbrace{\frac{n\, 2r\, (2r + 1)}{n + 2r}}_{\leq (2r + 1)^2} \times 
  \frac{\lfloor n x \rfloor + 1}{n - 1 + 2r} \times
  \underbrace{\frac{1}{n - \lfloor nx \rfloor + 2r - 1}}_{\leq \frac{1}{2r}}
  \times \underbrace{\prod_{i = 1}^{\lfloor n x \rfloor}
  \frac{n - i}{n - i + 2r}}_{\leq 1}\, .
\]
where, to avoid cluttering the expression, the $n$ index of $r_n$ has been
dropped. Since
\[
  \frac{\lfloor n x \rfloor + 1}{n - 1 + 2\,r_n} \leq
  \frac{(n - 1) x  + x + 1}{n - 1} \leq x + \frac{2}{n - 1} \, 
  \xrightarrow[n\to +\infty]{\,\,\text{uniformly}\,\,}\, x \, , 
\]
there exists $c$ such that, for all $x \in \COInterval{0, 1}$ and
$n$ large enough,
\[
  f_n(x) \leq c\, x
\]
Since $f_n$ is zero outside of $\COInterval{0, 1}$, this shows that
$(f_n)$ is dominated by $g\colon x \mapsto c \, x \, \mathds{1}_{[0, 1[}(x)$.


\subsubsection{Proof of (ii)}
In this subsection, $r_n$ is both $\SmallOmega{1}$ and
$\SmallOh{n}$, and $N_n = n / r_n$.
For brevity, we will write $k_n$ for $\lfloor nx/r_n\rfloor$. It should be
noted that
\begin{itemize}
  \item $k_n$ is both $\SmallOmega{1}$ and $\SmallOh{n}$.
  \item $k_n r_n / n \to x$ uniformly in $x$ on $\COInterval{0, +\infty}$.
\end{itemize}

\paragraph*{Limit of $f_n$}
In this paragraph, we will need Stirling's formula for the asymptotics of
$\Gamma$:
\[
  \Gamma(t + 1) \sim \sqrt{2\pi t} \, \frac{t^t}{e^t}\, .
\]
Using this in Equation~\eqref{eqAppDegreeConv02}, 
\begin{align*}
  P_n(x) &=
  \frac{\Gamma(n)\,  \Gamma(n - \lfloor N_n x\rfloor + 2\,r_n - 1)}{\Gamma(n -
  \lfloor N_n x \rfloor)\, \Gamma(n + 2\,r_n - 1)} \\[1ex]
  &\sim \underbrace{\sqrt{\frac{(n - 1)(n - 2 - k_n + 2\,r_n)}{
  (n - 1 - k_n)(n - 2 + 2\,r_n)}}}_{\sim 1} \times
  \underbrace{\frac{e^{n - 1 - k_n}\, e^{n - 2 + 2r_n}}{e^{n - 1}\, 
  e^{n - 2 - k_n + 2r_n}}}_{ = 1} \times \, Q_n
\end{align*}
where
\[
  Q_n = 
  \frac{(n - 1)^{n - 1} \, 
  (n - 2 - k_n + 2\,r_n)^{n - 2 - k_n + 2r_n}}{(n - 1 - k_n)^{n - 1 - k_n} \,
  (n - 2 + 2\,r_n)^{n - 2 + 2r_n}} \, .
\]
Let us show that $Q_n\to e^{-2x}$:
\begin{align*}
  \log Q_n &= \ \ (n - 1)\log({n - 1}) \\
         &\ \ + (n - a + b)\log({n - a + b}) \\
         &\ \ -(n - a)\log({n - a}) \\
         &\ \ - (n - 1 + b)\log({n - 1 + b})
\end{align*}
where, to avoid cluttering the text, we have written $a$ for $k_n + 1$ and
$b$ for $2\,r_n - 1$. Factorizing, we get
\[
  \log Q_n = n \log\!\left(\frac{(n - 1)(n - a + b)}{(n - a)(n - 1 + b)}\right)
  - a \log\!\left(\frac{n - a + b}{n - a} \right)
  + b \log\!\left(\frac{n - a + b}{n - 1 + b}\right)
  - \log\!\left(\frac{n - 1}{n - 1 + b}\right) \, .
\]
Now,
\[
  \frac{(n - 1)(n - a + b)}{(n - a)(n - 1 + b)} =
  1 + \underbrace{\frac{(a - 1) b}{n^2 -n + nb - na + a - ab}}_{
  \sim \frac{2 k_nr_n}{n^2}\; =\; o(1)}
\]
so that
\[
  n \log\!\left(\frac{(n - 1)(n - a + b)}{(n - a)(n - 1 + b)}\right)
  \sim \frac{2 k_n r_n}{n} \to 2x
\]
Similarly,
\begin{align*}
  - a \log\!\left(\frac{n - a + b}{n - a} \right) =
  - a \log\!\left(1 + \frac{b}{n - a} \right) \sim - \frac{ab}{n} \to -2x \\ \\
   b \log\!\left(\frac{n - a + b}{n - 1 + b}\right) =
   b \log\!\left(1 + \frac{1 - a}{n - 1 + b}\right) \sim - \frac{ab}{n} \to -2x \\ \\
\end{align*}
and, finally, $- \log\!\left(\frac{n - 1}{n - 1 + b}\right) \to 0$. Putting
the pieces together,
\[
  \log Q_n \to - 2x \, .
\]

Having done that, we note that
\[
  \frac{2 \, n (2\,r_n + 1)}{(n + 2\,r_n)(n - 1 + 2\,r_n)}(k_n + 1) \,\to\, 4\,x\,.
\]
Plugging these results in Equation~\eqref{eqAppDegreeConv01}, we see that
\[
  \forall x \in \R, \quad f_n(x) \;\to\; 4\,x\,e^{-2x} \,
  \Indic*{\COInterval{0, +\infty}}(x)
\]
and we recognize the probability density function of a size-biased exponential
distribution with parameter 2.

\paragraph*{Domination of $(f_n)$}
Recall that, since $N_n = n/r_n$, for all $x \in \COInterval{0, 1}$, 
\[
  f_n(x) = 
  \frac{2\,n\,(2\,r_n + 1)}{(n + 2\,r_n) (n - 1 + 2\,r_n)}
  \, (k_n + 1) \, \prod_{i = 1}^{k_n} \frac{n - i}{n - i + 2\,r_n - 1}  \, .
\]
Next, note that, for all $i$, 
\[
  \frac{n - i}{n - i + 2\,r_n - 1} = 1 - \frac{2\,r_n - 1}{n - i + 2\,r_n - 1}
  \leq \exp\left({-\frac{2\,r_n - 1}{n - i + 2\,r_n - 1}}\right)
\]
so that
\[
  \prod_{i = 1}^{k_n} \frac{n - i}{n - i + 2\,r_n - 1} \leq
  \exp\left(- \sum_{i = 1}^{k_n}{\frac{2\,r_n - 1}{n - i + 2\,r_n - 1}}\right) \, ,
\]
with
\[
  \sum_{i = 1}^{k_n}{\frac{2\,r_n - 1}{n - i + 2\,r_n - 1}} \geq k_n\, 
  \frac{2\,r_n - 1}{n - 1 + 2\,r_n - 1} \, .
\]
Because $r_n = \omega(1)$, for all $\varepsilon > 0$,
$2\,r_n - 1 \geq (1 - \varepsilon) 2\,r_n$ for $n$ large enough.
Similarly, since $r_n = o(n)$,
$\frac{1}{n + 2\,r_n} \geq \frac{1}{(1 + \varepsilon) n}$.
As a result, there exists $c > 0$ such that
\[
  k_n\, \frac{2\,r_n - 1}{n - 1 + 2\,r_n - 1} \geq c \, k_n\, \frac{2\,r_n}{n}
  \,\tendsto[\text{uniformly}]{}\, 2cx \, .
\]
We conclude that 
\[
  \forall x \geq 0, \quad 
  \prod_{i = 1}^{k_n} \frac{n - i}{n - i + 2\,r_n - 1}
  \leq \exp\mleft(-2cx\mright)
\]
for $n$ large enough. Finally,
\[
  2\times \underbrace{\frac{n}{n + 2\,r_n}}_{\leq 1} \times
  \underbrace{\frac{(2\,r_n + 1) (k_n + 1)}{(n - 1 + 2\,r_n)}}_{\to 2x,\, \text{uniformly}}
  \leq 4cx
\]
and so $(f_n)$ is dominated by
$g\colon x \mapsto 4 c\, x \, e^{-2cx} \, \Indic*{\COInterval{0, +\infty}}(x)$.





\end{document}